    \newtheoremstyle{plain}%
        {8pt plus2pt minus4pt}%
        {8pt plus2pt minus4pt}%
        {\itshape}%
        {}%
        {\bfseries\scshape}%
        {}%
        {1em}%
        {}%
    \newtheoremstyle{upright}%
        {8pt plus2pt minus4pt}%
        {8pt plus2pt minus4pt}%
        {\upshape}%
        {}%
        {\bfseries\scshape}%
        {}%
        {1em}%
        {}%
\numberwithin{equation}{section}
\numberwithin{theorem}{section}
\theoremstyle{upright}
\newtheorem{rem}[theorem]{Remark}
\newtheorem{exa}[theorem]{Example}
\newtheorem{??}[theorem]{Question}
\theoremstyle{plain}
\newtheorem{prop}[theorem]{Proposition}
\newtheorem{cor}[theorem]{Corollary}
\newtheorem{defi}[theorem]{Definition}
\newtheorem{lemma-definition}[theorem]{Lemma and Definition}
\numberwithin{equation}{section}
\newcommand{\an}{\mathrm{an}}
\newcommand{\id}{\operatorname{id}}
\newcommand{\baseRing}[1]{\ensuremath{\mathbb{#1}}}
\newcommand{\Z}{\baseRing{Z}}
\newcommand{\R}{\baseRing{R}}
\newcommand{\C}{\baseRing{C}}
\newcommand{\bp}{\begin{proof}}
\newcommand{\ep}{\end{proof}}
\newcommand{\beq}{\begin{equation}}
\newcommand{\eeq}{\end{equation}}
\newcommand{\beqs}{\begin{equation*}}
\newcommand{\eeqs}{\end{equation*}}
\newcommand{\beas}{\begin{eqnarray*}}
\newcommand{\eeas}{\end{eqnarray*}}
\newcommand{\btheorem}{\begin{theorem}}
\newcommand{\etheorem}{\end{theorem}}
\newcommand{\bl}{\begin{lemma}}
\newcommand{\el}{\end{lemma}}
\newcommand{\benum}{\begin{enumerate}}
\newcommand{\eenum}{\end{enumerate}}
\newcommand{\cinf}{C^\infty}
\newcommand{\Alpha}{{\text{A}}}
\newcommand{\ga}{\alpha}
\newcommand{\gb}{\beta}
\newcommand{\gc}{\gamma}
\newcommand{\cala}{\mathcal{A}}
\newcommand{\calb}{\mathcal{B}}
\newcommand{\calc}{\mathcal{C}}
\newcommand{\cale}{\mathcal{E}}
\newcommand{\calf}{\mathcal{F}}
\newcommand{\calg}{\mathcal{G}}
\newcommand{\calh}{\mathcal{H}}
\newcommand{\call}{\mathcal{L}}
\newcommand{\calo}{\mathcal{O}}
\newcommand{\calq}{\mathcal{Q}}
\newcommand{\calr}{\mathcal{R}}
\newcommand{\cals}{\mathcal{S}}
\newcommand{\calz}{\mathcal{Z}}
\newcommand{\fraku}{\mathfrak{U}}
\newcommand{\Xand}{X_{\mathrm{an}}}
\def\suban{_{\mathrm{an}}}
\def\alg{_{\mathrm{alg}}}
\newcommand{\ap}{\ga_0 \ldots \ga_p}
\newcommand{\apone}{\ga_0 \ldots \ga_{p+1}}
\newcommand{\bu}{^{\bullet}}
\newcommand{\comp}{\mathrel{\scriptstyle\circ}}
\newcommand{\dpd}[2]{\dfrac{\partial #1}{\partial #2}}
\newcommand{\hh}{\mathbb{H}}
\newcommand{\im}{\operatorname{Im}}
\newcommand{\supp}{\operatorname{supp}}
\newcommand{\term}[1]{\textbf{\textit{#1}}}
\newtheorem{exer}[theorem]{Exercise}
\def \pf{\begin{proof}}
\def \epf{\end{proof}}
\def\bfig{\vcenter\bgroup}
\def\efig{\egroup}
\theoremstyle{upright}
\font\tenmsb=msbm10
\font\sevenmsb=msbm7
\font\fivemsb=msbm5
\font\teneufm=eufm10
\font\seveneufm=eufm7
\font\fiveeufm=eufm5
\begin{document}
\setcounter{page}{69}
\setcounter{chapter}{1}

\chapter[The Algebraic de Rham Theorem by F.~El Zein and L.~Tu]{From Sheaf Cohomology to
the Algebraic de Rham Theorem\\
by Fouad El Zein and Loring W.~Tu}\label{ch:elzein_tu}

\bigskip
\bigskip


\section*{Introduction }
\addcontentsline{toc}{section}{Introduction}
The concepts of homology and cohomology trace their origin
to the work of Poincar\'e in the late nineteenth century.
They attach to a topological space algebraic structures such as groups or rings that are topological invariants of the space.
There are actually many different theories, for example, simplicial, singular, and de Rham theories.
In 1931, Georges de Rham proved a conjecture of Poincar\'e
on a relationship between cycles and smooth differential forms,
which establishes for a smooth manifold an isomorphism
between singular cohomology with real coefficients and de Rham
cohomology.

More precisely, by integrating smooth forms over singular chains
on a smooth manifold $M$, one obtains a linear map
\[
\cala^k(M) \to S^k(M, \R)
\]
from the vector space $\cala^k(M)$ of smooth $k$-forms on $M$
to the vector space $S^k(M, \R)$ of real singular $k$-cochains on $M$.
The theorem of de Rham\index{de Rham theorem} asserts that this linear map induces an
isomorphism
\[
H_{\text{dR}}^*(M) \overset{\sim}{\to} H^*(M,\R)
\]
between the de Rham cohomology $H_{\text{dR}}^*(M)$ and the singular
cohomology $H^*(M,\R)$,
under which the wedge product of classes of closed smooth differential forms corresponds
to the cup product of classes of cocycles.
Using complex coefficients, there is similarly an isomorphism
\[
h^*\big(\cala\bu(M,\C)\big) \overset{\sim}{\to} H^*(M, \C),
\]
where $h^*\big(\cala\bu(M,\C)\big)$ denotes the cohomology of the
complex $\cala\bu(M,\C)$ of smooth $\C$-valued forms on $M$.

By an algebraic variety,\index{algebraic variety} we will mean
a reduced separated scheme of finite type over an
algebraically closed field \cite[Vol.~2, Ch.~VI, Sec.~1.1, p.~49]{shafarevich}.
In fact, the field throughout the article will be the field of complex
numbers.
For those not familiar with the language of schemes, there is
no harm in taking an algebraic variety to be a quasi-projective
variety; the proofs of the algebraic de Rham theorem
 are exactly the same in the two cases.

Let $X$ be a smooth complex algebraic variety with the Zariski
topology.
A \term{regular} function\index{regular function} on an open set $U\subset X$
is a rational function that is defined at every point of $U$.
A differential $k$-form on $X$ is \term{algebraic}\index{algebraic differential form} if locally it can be
written as $\sum f_I\, dg_{i_1} \wedge \cdots \wedge dg_{i_k}$
for some regular functions $f_I$, $g_{i_j}$.
With the complex topology, the underlying set of the smooth variety
$X$ becomes a complex manifold $\Xand$.
By de Rham's theorem, the singular
cohomology $H^*(\Xand, \C)$ can be computed from
the complex of smooth $\C$-valued differential forms
on $\Xand$.
Grothendieck's algebraic de Rham theorem\index{Grothendieck's algebraic de Rham theorem}%
\index{algebraic de Rham theorem}\index{de Rham theorem!algebraic} asserts
that the singular cohomology $H^*(\Xand, \C)$ can in fact be computed from
the complex $\Omega_{\alg}^{\bullet}$ of sheaves
of algebraic differential forms on $X$.
Since algebraic
de Rham cohomology can be defined over any field,
Grothendieck's theorem lies at the foundation of
Deligne's theory of absolute Hodge classes
(see Chapter~\ref{ch:charles_schnell} in this volume).

In spite of its beauty and importance, there does not seem to be an accessible
account of Grothendieck's \hyphenation{Grothen-dieck} algebraic
de Rham theorem in the literature.  Grothendieck's paper \cite{grothendieck66},
invoking higher direct images of sheaves and a theorem of
Grauert--Remmert, is quite difficult to read.
An impetus for our work is to give an elementary proof
of Grothendieck's theorem, elementary in the sense that
we use only tools from standard textbooks as well as
some results from Serre's groundbreaking FAC and GAGA papers
(\cite{serre55} and \cite{serre56}).

This article is in two parts.
In Part I, comprising Sections 1 through 6, we prove Grothendieck's algebraic
de Rham theorem more or less from scratch for a
smooth complex projective variety $X$, namely, that there is
an isomorphism
\[
H^*(\Xand,\C) \simeq
\hh^*(X, \Omega_{\alg}^{\bullet})
\]
between the complex singular cohomology of $\Xand$
and the hypercohomology of the complex $\Omega_{\alg}^{\bullet}$
of sheaves of algebraic differential forms on $X$.
The proof, relying mainly on Serre's GAGA principle
and the technique of hypercohomology,
necessitates a discussion of sheaf cohomology, coherent sheaves,
and hypercohomology, and so another goal is to
give an introduction to these topics.
While Grothendieck's theorem is valid as a ring isomorphism,
to keep the account simple, we prove only a vector space isomorphism.
In fact, we do not even discuss multiplicative structures on hypercohomology.
In Part II, comprising Sections 7 through 10,
we develop more machinery, mainly
the \v{C}ech cohomology of a sheaf and
the \v{C}ech cohomology of a complex
of sheaves, as tools for computing hypercohomology.
We prove that the general case of Grothendieck's theorem is equivalent to the
affine case, and then prove the affine case.

The reason for the two-part structure of our article is the sheer
amount of background needed to prove Grothendieck's
algebraic de Rham theorem in general.
It seems desirable to treat the simpler case of a smooth projective
variety first, so that the reader can see a major landmark before
being submerged in yet more machinery.
In fact, the projective case is not necessary to the proof of
the general case, although the tools developed, such as
sheaf cohomology and hypercohomology, are indispensable to the
general proof.
A reader who is already familiar with these tools can go directly
to Part II.

Of the many ways to define sheaf cohomology, for example
as \v{C}ech cohomology, as the cohomology of global sections
of a certain resolution, or as an example of a right-derived
functor in an abelian category, each has its own merit.
We have settled on Godement's approach
using his
canonical resolution \cite[Sec.~4.3, p.~167]{godement}.
It has the advantage of being
the most direct.
Moreover, its extension to the hypercohomology of a complex
of sheaves gives at once the $E_2$ terms of the standard spectral
sequences converging to the hypercohomology.

What follows is a more detailed description
of each section.  In Part I, we recall in Section~1 some of the
properties of sheaves.
In Section~2, sheaf cohomology is defined as the cohomology
of the complex of global sections of Godement's
canonical resolution.
In Section~3, the cohomology of a sheaf is generalized to the
hypercohomology of a complex of sheaves.
Section~4 defines coherent analytic and algebraic sheaves
and summarizes Serre's GAGA principle
for a smooth complex projective variety.
Section~5 proves the holomorphic Poincar\'e lemma
and the analytic de Rham theorem for any complex manifold,
and Section~6 proves the algebraic
de Rham theorem for a smooth complex projective variety.

In Part II, we develop in Sections 7 and 8 the \v{C}ech cohomology of
a sheaf and of a complex of sheaves.
Section~9 reduces the algebraic de Rham theorem for an algebraic
variety to a theorem about affine varieties.
Finally, in Section~10 we treat the
affine case.


We are indebted to George Leger for his feedback and
to Jeffrey D.~Carlson for helpful discussions and detailed comments on
the many drafts of the article.
Loring Tu is also grateful to the Tufts University Faculty
Research Award Committee for a New Directions in Research Award and to
the National Center for Theoretical Sciences Mathematics Division (Taipei Office) in Taiwan for hosting him
during part of the preparation of this manuscript.

\noindent
\section*{Part I.~Sheaf Cohomology, Hypercohomology,
and the Projective Case}
\addcontentsline{toc}{section}{\textbf{Part I.~Sheaf Cohomology, Hypercohomology, and the Projective Case}}

\section{Sheaves}

We assume a basic knowledge of sheaves as
in \cite[Ch.~II, Sec.~1, pp.~60--69]{hartshorne}.

\subsection{The \'Etal\'e Space of a Presheaf} \label{ss:associated}

Associated to a presheaf $\calf$ on a topological space $X$ is another
topological space $E_{\calf}$, called the \term{\'etal\'e space}
of $\calf$.%
\index{etale space@\'etal\'e space}
Since the \'etal\'e space is needed in the construction of Godement's
canonical resolution of a sheaf, we give a brief discussion here.
As a set, the \'etal\'e space $E_{\calf}$ is the disjoint union
$\coprod_{p\in X} \calf_p$ of all the stalks of $\calf$.
There is a natural projection map
$\pi\colon E_{\calf} \to X$ that maps $\calf_p$ to $p$.
A \term{section}\index{section!of an \'etal\'e space} of the \'etal\'e space $\pi\colon E_{\calf} \to X$
over $U \subset X$ is a map $s\colon U \to E_{\calf}$ such that
$\pi \comp s = \id_U$, the identity map on $U$.
For any open set $U \subset X$, element $s\in \calf(U)$, and point $p
\in U$, let $s_p \in \calf_p$ be the germ of $s$ at $p$.
Then the element $s\in \calf(U)$ defines a section
$\tilde{s}$ of the \'etal\'e
space over $U$,
\begin{align*}
\tilde{s}\colon U &\to E_{\calf},\\
p &\mapsto s_p \in \calf_p.
\end{align*}
The collection
\[
\{ \tilde{s}(U) \mid U \text{ open in } X, \ s \in \calf(U) \}
\]
of subsets of $E_{\calf}$ satisfies the conditions to be a basis
for a topology on $E_{\calf}$.
With this topology, the \'etal\'e space $E_{\calf}$ becomes
a topological space.
By construction, the topological space $E_{\calf}$
is locally homeomorphic to $X$.
For any element $s\in \calf(U)$, the function $\tilde{s}\colon U \to E_{\calf}$
is a continuous section of $E_{\calf}$.
A section $t$ of the \'etal\'e space $E_{\calf}$ is continuous if
and only if every point $p \in X$ has a neighborhood $U$ such that $t
= \tilde{s}$ on $U$ for some $s \in \calf(U)$.

Let $\calf^+$ be the presheaf that associates to each
open subset $U \subset X$ the abelian group
\[
\calf^+(U) := \{ \text{continuous sections } t\colon U \to
E_{\calf} \}.
\]
Under pointwise addition of sections,
the presheaf $\calf^+$ is easily seen to be a sheaf, called the
\term{sheafification} or the \term{associated sheaf} of the presheaf $\calf$.%
\index{sheafification}\index{associated sheaf}
There is an obvious presheaf morphism $\theta\colon \calf \to
\calf^+$ that sends a section $s\in \calf(U)$ to the section
$\tilde{s} \in \calf^+(U)$.

\begin{exa}
For each open set $U$ in a topological space $X$,
let $\calf(U)$ be the group of all
\emph{constant} real-valued functions on $U$.
At each point $p \in X$, the stalk $\calf_p$ is $\R$.
The \'etal\'e space $E_{\calf}$ is thus $X \times \R$,
but not with its usual topology.
A basis for $E_{\calf}$ consists of open sets of the form
$U \times \{ r\}$ for an open set $U \subset X$ and a
number $r \in \R$.
Thus, the topology on $E_{\calf}= X \times \R$ is the
product topology of the given topology on $X$ and the
discrete topology on $\R$.
The sheafification $\calf^{+}$ is the sheaf $\underline{\R}$
of \emph{locally constant} real-valued functions.%
\index{presheaf of constant functions}\index{presheaf of constant functions!sheafification}
\end{exa}

\begin{exer} \label{1exer:sheafification}
Prove that if $\calf$ is a sheaf,
then $\calf \simeq \calf^+$.
(\textit{Hint}: the two sheaf axioms say precisely that for
every open set $U$, the map $\calf(U) \to \calf^+(U)$ is
one-to-one and onto.)
\end{exer}

\subsection{Exact Sequences of Sheaves} \label{s:exact}

From now on, we will consider only sheaves of abelian groups.
A sequence of morphisms of sheaves of abelian groups
\[
\cdots \longrightarrow \calf^1 \overset{d_1}{\longrightarrow}
\calf^2 \overset{d_2}{\longrightarrow}  \calf^3
\overset{d_3}{\longrightarrow} \cdots
\]
on a topological space $X$ is said to be \term{exact}\index{exact
sequence!of sheaves} at $\calf^k$ if $\im d_{k-1} = \ker d_k$;
the sequence is said to be \term{exact} if
it is exact at every $\calf^k$.
The exactness of a sequence of morphisms of sheaves
on $X$ is equivalent to the exactness of the sequence of stalk
maps at every point $p \in X$ (see \cite[Exer.~1.2, p.~66]{hartshorne}). An exact sequence of sheaves of the
form
\begin{equation} \label{e:short1}
0 \to \cale \to \calf \to \calg \to 0
\end{equation}
is said to be a \term{short exact sequence}.\index{short exact sequence!of
sheaves}

It is not too difficult to show that the exactness of the sheaf
sequence \eqref{e:short1} over a topological space $X$
implies the exactness of the
sequence of sections
\begin{equation} \label{e:left}
0 \to \cale(U) \to \calf(U) \to \calg(U)
\end{equation}
for every open set $U \subset X$,
but that the last map $\calf(U) \to \calg(U)$ need not
be surjective.
In fact, as we will see in Theorem~\ref{t:long},
the cohomology $H^1(U, \cale)$ is a measure of the
nonsurjectivity of the map $\calf(U) \to
\calg(U)$ of sections.

Fix an open subset $U$ of a topological space $X$.
To every sheaf $\calf$ of abelian groups on $X$, we can associate the
abelian group $\Gamma(U,\calf) := \calf(U)$
of sections over $U$
and to every sheaf map $\varphi\colon \calf \to \calg$,
the group homomorphism $\varphi_U\colon \Gamma(U,\calf) \to
\Gamma(U,\calg)$.
This makes $\Gamma(U,\ )$ a functor from sheaves
of abelian groups on $X$ to abelian
groups.

%

A functor $F$ from the category of sheaves of
abelian groups on $X$ to the
category of abelian groups is said to be \term{exact}%
\index{exact functor}\index{functor!exact}
if it maps a short exact sequence of sheaves
\[
0 \to \cale \to \calf \to \calg \to 0
\]
to a short exact sequence of abelian groups
\[
0 \to F(\cale) \to F(\calf) \to F(\calg) \to 0.
\]
If instead one has only the exactness of
\begin{equation} \label{e:leftexact}
0 \to F(\cale) \to F(\calf) \to F(\calg),
\end{equation}
then $F$ is said to be a \term{left-exact functor}.\index{left-exact functor}
The sections functor $\Gamma(U, \ )$ is left exact
but not exact.  (By Proposition~\ref{2p:sections} and
Theorem~\ref{t:long}, the next term in the exact
sequence \eqref{e:leftexact} is the first
cohomology group $H^1(U,\cale)$.)

\subsection{Resolutions}

Recall that $\underline{\R}$ is the sheaf of locally constant functions
with values in $\R$ and $\cala^k$ is the sheaf of smooth
$k$-forms on a manifold $M$.
The exterior derivative $d\colon \cala^k(U) \to \cala^{k+1}(U)$,
as $U$ ranges over all open sets in $M$,
defines a morphism of sheaves $d\colon \cala^k \to \cala^{k+1}$.

\begin{prop}
On any manifold $M$ of dimension $n$, the sequence of
sheaves
\begin{equation} \label{6e:dr}
0 \to \underline{\R} \to \cala^0 \overset{d}{\to} \cala^1
\overset{d}{\to} \cdots \overset{d}{\to} \cala^n \to 0
\end{equation}
is exact.
\end{prop}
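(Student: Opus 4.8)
The plan is to verify exactness stalkwise. As noted above, a sequence of sheaf morphisms is exact precisely when the induced sequence of stalk maps is exact at every point, so I fix $p \in M$ and check exactness of
\[
0 \to \R \to \cala^0_p \overset{d}{\to} \cala^1_p \overset{d}{\to} \cdots \overset{d}{\to} \cala^n_p \to 0,
\]
using $\underline{\R}_p = \R$. Since the question is local at $p$, I may shrink $M$ to a coordinate ball, which is star-shaped about $p$.

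The two low-degree terms are immediate. The map $\R \to \cala^0_p$ is injective because a nonzero constant has nonzero germ. For exactness at $\cala^0_p$, a germ $f$ with $df = 0$ has vanishing differential on a connected neighborhood of $p$ and is therefore constant there, so $\ker(d\colon \cala^0_p \to \cala^1_p)$ is exactly the image of $\R$.

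The heart of the matter is exactness at $\cala^k_p$ for $1 \le k \le n$ (the case $k=n$ also yielding surjectivity of $d$ onto $\cala^n_p$, since every top-degree germ is automatically closed). This is the Poincar\'e lemma, which I would prove by building a homotopy operator. On a star-shaped open set $U$ about $p$, consider the scaling homotopy $F(t,x) = tx$ between the constant map to $p$ and the identity, and set
\[
h\omega = \int_0^1 \iota_{\partial_t}\bigl(F^*\omega\bigr)\, dt \qquad (\omega \in \cala^k(U),\ k \ge 1).
\]
The standard homotopy formula gives $d h + h d = F_1^* - F_0^*$, and since $F_1 = \id$ while $F_0^*$ annihilates forms of positive degree, this reads
\[
d h + h d = \id \qquad \text{on } \cala^k(U),\ k \ge 1.
\]
Consequently any closed germ $\omega$ satisfies $\omega = d(h\omega)$, proving $\ker d = \im d$ at each $\cala^k_p$ with $k \ge 1$.

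The step I expect to be the main obstacle is establishing the homotopy identity $dh+hd=\id$: this requires differentiating under the integral sign and carefully tracking the interior product and pullback along $F$ on the star-shaped domain. Once that computation is in hand, the remaining assertions are formal consequences of the reduction to stalks.
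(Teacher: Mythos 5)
Your proposal is correct and takes essentially the same route as the paper: both verify exactness stalkwise, handle the degree-zero terms by observing that germs with vanishing differential are locally constant, and settle exactness at $\cala^k_p$ for $k \ge 1$ by the Poincar\'e lemma on a suitably shrunk neighborhood of $p$. The only difference is one of packaging: the paper shrinks to a contractible neighborhood and cites the Poincar\'e lemma from Bott--Tu, whereas you shrink to a star-shaped coordinate ball and inline the lemma's proof via the scaling homotopy operator $dh + hd = \id$ --- a self-contained variant of the same argument, and the homotopy identity you flag as the main obstacle is precisely what the paper's citation covers.
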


\pf Exactness at $\cala^0$ is equivalent to the exactness of the
sequence of stalk maps $\underline{\R}_p \to \cala_p^0
\overset{d}{\to} \cala_p^1$ for all $p \in M$. Fix a point $p \in
M$. Suppose $[f] \in \cala_p^0$ is the germ of a $\cinf$ function
$f\colon U \to \R$, where $U$ is a neighborhood of $p$,
such that $d [f] = [0]$ in $\cala_p^1$. Then there is a
neighborhood $V \subset U$ of $p$ on which $df \equiv 0$. Hence,
$f$ is locally constant on $V$ and $[f] \in \underline{\R}_p$.
Conversely, if $[f] \in \underline{\R}_p$, then $d [f] =0$. This
proves the exactness of the sequence \eqref{6e:dr} at $\cala^0$.

Next, suppose $[\omega] \in \cala_p^k$ is the germ of a smooth
$k$-form $\omega$ on some neighborhood of $p$ such that $d [\omega] = 0 \in
\cala_p^{k+1}$. This means there is a neighborhood $V$ of $p$ on
which $d\omega \equiv 0$. By making $V$ smaller, we may assume
that $V$ is contractible.
By the Poincar\'e lemma \cite[Cor.~4.1.1, p.~35]{bott--tu},
$\omega$ is
exact on $V$, say $\omega = d\tau$ for some $\tau \in
\cala^{k-1}(V)$. Hence, $[\omega] = d [\tau]$ in $\cala_p^k$.
This proves the exactness of the sequence \eqref{6e:dr}
at $\cala^k$ for $k > 0$.  \epf

In general, an exact sequence of sheaves
\[
0 \to \cala \to \calf^0 \to \calf^1 \to \calf^2 \to \cdots
\]
on a topological space $X$ is called a
\term{resolution}\index{resolution} of the sheaf $\cala$.
On a complex manifold $M$ of complex dimension $n$,
the analogue of the Poincar\'e lemma is
the $\bar{\partial}$-Poincar\'e lemma \cite[p.~25]{griffiths--harris},
from which it follows that
for each fixed integer $p \ge 0$, the sheaves $\cala^{p,q}$ of smooth
$(p,q)$-forms on $M$ give rise to a resolution of the sheaf
$\Omega^p$ of
holomorphic $p$-forms on $M$:
\begin{equation} \label{6e:holomorphic}
0 \to {\Omega^p} \to \cala^{p,0} \overset{\bar{\partial}}{\to}
\cala^{p,1} \overset{\bar{\partial}}{\to} \cdots
\overset{\bar{\partial}}{\to} \cala^{p,n} \to 0.
\end{equation}
The cohomology of the \term{Dolbeault complex}\index{Dolbeault complex}
\[
0 \to \cala^{p,0}(M) \overset{\bar{\partial}}{\to}
\cala^{p,1}(M) \overset{\bar{\partial}}{\to} \cdots
\overset{\bar{\partial}}{\to} \cala^{p,n}(M) \to 0
\]
of smooth $(p,q)$-forms on $M$
is by definition the \term{Dolbeault cohomology}\index{Dolbeault cohomology} $H^{p,q}(M)$
of the complex manifold $M$.
(For $(p,q)$-forms on a complex manifold, see \cite{griffiths--harris}
or Chapter~\ref{ch:cattani_kahler}.)

\section{Sheaf Cohomology}\label{ch2:sec2}

The \term{de Rham cohomology}\index{de Rham cohomology} $H_{\text{dR}}^*(M)$ of a smooth $n$-manifold
$M$ is defined to be the cohomology of the \term{de Rham complex}\index{de Rham complex}
\[
0 \to \cala^0(M) \to \cala^1(M) \to \cala^2(M) \to \cdots \to
\cala^n(M) \to 0
\]
of $\cinf$-forms on $M$.
De Rham's theorem\index{de Rham theorem} for a smooth manifold $M$ of
dimension $n$ gives an
isomorphism between the real singular cohomology $H^k(M,\R)$
and the de Rham cohomology of $M$ (see \cite[Th.~14.28, p.~175; Th.~15.8, p.~191]{bott--tu}).
One obtains the de Rham complex $\cala\bu(M)$ by
applying the global sections functor $\Gamma(M, \ )$ to the
resolution
\[
0 \to \underline{\R} \to \cala^0 \to \cala^1 \to \cala^2 \to \cdots \to
\cala^n \to 0
\]
of $\underline{\R}$,
but omitting the initial term $\Gamma(M, \underline{\R})$.
This suggests that the cohomology of a sheaf $\calf$ might
be defined as the cohomology of the complex of
global sections of a certain resolution of $\calf$.
Now every sheaf has a canonical resolution:
its \emph{Godement resolution}.
Using the Godement resolution, we will obtain a
well-defined cohomology theory of sheaves.

\subsection{Godement's Canonical Resolution}\label{ss:godement}

Let $\calf$ be a sheaf of abelian groups on a topological
space $X$.
In Section~\ref{ss:associated}, we defined the \'etal\'e
space $E_{\calf}$ of $\calf$.
By Exercise~\ref{1exer:sheafification}, for any open set $U \subset X$, the group $\calf(U)$ may
be interpreted as
\[
\calf(U) = \calf^+(U) = \{ \text{continuous sections of } \pi\colon E_{\calf} \to X \}.
\]
Let $\calc^0\calf (U)$ be the group of all (not necessarily
continuous) sections of the \'etal\'e space $E_{\calf}$ over
$U$; in other words, $\calc^0\calf(U)$ is the direct product $\prod_{p\in U} \calf_p$.
In the literature, $\calc^0\calf$ is often called the sheaf
of \term{discontinuous sections}\index{discontinuous sections} of the \'etal\'e space $E_{\calf}$
of $\calf$.
Then $\calf^+ \simeq \calf$ is a subsheaf of $\calc^0 \calf$ and there is an
exact sequence
\begin{equation}\label{e:canonical0}
0 \to \calf \to \calc^0\calf \to \calq^1 \to 0,
\end{equation}
where $\calq^1$ is the quotient sheaf $\calc^0\calf/\calf$.
Repeating this construction yields exact sequences
\begin{align}
0 \to \calq^1 \to &\calc^0\calq^1 \to \calq^2 \to 0,\label{e:canonical2}\\
0 \to \calq^2 \to &\calc^0\calq^2 \to \calq^3 \to
0,\label{e:canonical3}\\
& \cdots . \notag
\end{align}

The short exact sequences \eqref{e:canonical0} and
\eqref{e:canonical2}
can be spliced together to form a longer exact sequence
\[
\xymatrix{
0 \ar[r] & \calf \ar[r] & \calc^0\calf \ar[rr] \ar@{->>}[dr] && \calc^1\calf \ar[r]
&\calq^2 \ar[r] & 0 \\
& & & \calq^1 \ar@{^{(}->}[ur] & & &
}
\]
with $\calc^1 \calf := \calc^0 \calq^1$.
Splicing together all the short exact sequences
\eqref{e:canonical0}, \eqref{e:canonical2}, \eqref{e:canonical3},
$\ldots,$ and
defining $\calc^k \calf := \calc^0 \calq^k$
results in the long exact sequence
\[
0 \to \calf \to \calc^0\calf \to \calc^1\calf \to \calc^2\calf \to
\cdots ,
\]
called the \term{Godement canonical resolution}\index{Godement canonical resolutions} of $\calf$.
The sheaves $\calc^k\calf$ are called the \term{Godement sheaves}\index{Godement sheaves}
of $\calf$.
(The letter ``$\calc$'' stands for ``canonical.'')

Next we show that the Godement resolution $\calf \to
\calc\bu\calf$
is functorial:\index{functoriality of Godement resolution}\index{Godement resolution!is functorial}
 a sheaf map $\varphi\colon \calf \to \calg$
induces a morphism $\varphi_*\colon \calc\bu\calf \to \calc\bu\calg$
of their Godement resolutions satisfying the two functorial properties:
preservation of the identity and of composition.

A sheaf morphism (sheaf map) $\varphi\colon \cale \to \calf$ induces
a sheaf morphism
\[
\begin{xy}
{\ar (0,12)*!<16pt,0pt>++{\calc^0\varphi\colon\ \ \calc^0\cale}="a";%
(25,12)*++{\calc^0\calf}="b"};%
(0,0)*++{\prod \cale_p}="c";%
(25,0)*++{\prod \calf_p}="d";%
"a"; "c" **\dir2{-};
"b"; "d" **\dir2{-}
\end{xy}
\]
and therefore a morphism of quotient sheaves
\[
\begin{xy}
{\ar (0,12)*++{\calc^0\cale/\cale}="a";%
(25,12)*!<-3pt,0pt>++{\calc^0\calf/\calf\ ,}="b"};%
(0,0)*++{\calq_{\cale}^1}="c";%
(25,0)*++{\calq_{\calf}^1}="d";%
"a"; "c" **\dir2{-};
"b"; "d" **\dir2{-}
\end{xy}
\]
which in turn induces a sheaf morphism
\[
\begin{xy}
{\ar (0,12)*!<14pt,0pt>++{\calc^1\varphi\colon\ \ \calc^0\calq_{\cale}^1}="a";%
(25,12)*!<-6pt,0pt>++{\calc^0\calq_{\calf}^1\ .}="b"};%
(0,0)*++{\calc^1\cale}="c";%
(25,0)*++{\calc^1\calf}="d";%
"a"; "c" **\dir2{-};
"b"; "d" **\dir2{-}
\end{xy}
\]
By induction, we obtain $\calc^k\varphi\colon \calc^k\cale \to \calc^k\calf$ for all $k$.
It can be checked that each $\calc^k(\ )$ is a functor from sheaves to sheaves,
called the \term{$k$th Godement functor}.\index{Godement functors}

Moreover, the induced morphisms $\calc^k\varphi$ fit into a commutative
diagram
\[
\xymatrix{
0 \ar[r] & \cale \ar[r] \ar[d]& \calc^0\cale \ar[r] \ar[d]& \calc^1\cale \ar[r] \ar[d]
& \calc^2\cale \ar[r] \ar[d]& \cdots \\
0 \ar[r] & \calf \ar[r] & \calc^0\calf \ar[r] & \calc^1\calf \ar[r]
& \calc^2\calf \ar[r] & \cdots ,
}
\]
so that collectively $(\calc^k \varphi)_{k=0}^{\infty}$ is a
morphism of Godement resolutions.

\begin{prop} \label{p:godement}
If
\[
0 \to \cale \to \calf \to \calg \to 0
\]
is a short exact sequence of sheaves on a topological
space $X$ and $\calc^k(\ )$ is the $k$th Godement sheaf functor,
then the sequence of sheaves
\[
0 \to \calc^k\cale \to \calc^k\calf \to
 \calc^k\calg \to 0
\]
is exact.
\end{prop}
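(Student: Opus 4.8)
The plan is to reduce everything to the single fact that the zeroth Godement functor $\calc^0$ is exact, and then to bootstrap from $k=0$ to all $k$ by induction along the canonical short exact sequences. Recall from Section~\ref{ss:godement} that $\calc^k\calf = \calc^0\calq^k$, where $\calq^0 := \calf$ and $\calq^{k+1} := \calc^0\calq^k/\calq^k$; thus, once $\calc^0$ is known to be exact, proving the proposition for a fixed $k$ reduces to knowing that the $k$-th quotient sheaves themselves form a short exact sequence $0 \to \calq^k_\cale \to \calq^k_\calf \to \calq^k_\calg \to 0$, since then applying the exact functor $\calc^0$ gives the claim.

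First I would show that $\calc^0$ is exact. By definition $\calc^0\calf(U) = \prod_{p\in U}\calf_p$, and the map on sections induced by a sheaf morphism is the product of the stalk maps. Since exactness of a sequence of sheaves is equivalent to exactness of the stalk sequences at every point (Section~\ref{s:exact}), the hypothesis furnishes an exact sequence $0 \to \cale_p \to \calf_p \to \calg_p \to 0$ for each $p$. Taking the direct product over $p \in U$ preserves exactness, because arbitrary products are exact in the category of abelian groups (kernels and cokernels being formed componentwise); hence $0 \to \calc^0\cale(U) \to \calc^0\calf(U) \to \calc^0\calg(U) \to 0$ is exact for every open $U$. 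Exactness on sections over all opens yields exactness of the sheaf sequence—passing to a stalk is a filtered colimit, hence exact, and exactness of sheaves is checked on stalks—so $\calc^0$ is an exact functor, which settles the case $k=0$.

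For the inductive step I would carry along the statement that the $k$-th quotient sheaves form a short exact sequence $0 \to \calq^k_\cale \to \calq^k_\calf \to \calq^k_\calg \to 0$, the base case $k=0$ being the given sequence. Assuming this, applying the exact functor $\calc^0$ immediately yields exactness of $0 \to \calc^k\cale \to \calc^k\calf \to \calc^k\calg \to 0$, which is the assertion of the proposition for this $k$. To advance the induction I would assemble the commutative diagram whose top row is the exact sequence of $\calq^k$'s, whose middle row is the exact sequence of $\calc^k$'s just obtained, and whose three vertical columns are the canonical short exact sequences $0 \to \calq^k \to \calc^k \to \calq^{k+1} \to 0$ attached to $\cale$, $\calf$, and $\calg$ (these commute because the whole Godement construction is functorial, as established above). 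With the two top rows and all three columns exact, the nine (``$3\times 3$'') lemma forces the bottom row $0 \to \calq^{k+1}_\cale \to \calq^{k+1}_\calf \to \calq^{k+1}_\calg \to 0$ to be exact as well, completing the induction.

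The main obstacle is isolated entirely in the exactness of $\calc^0$, and there the one substantive point is that infinite direct products are exact in the category of abelian groups; this is what lets the componentwise stalk exactness survive the passage to $\prod_{p\in U}$. The remaining ingredients—the stalk criterion for exactness, the exactness of filtered colimits, and the nine lemma—are standard homological bookkeeping, so I expect the write-up to be short once the exactness of $\calc^0$ is recorded.
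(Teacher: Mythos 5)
Your proof is correct and follows essentially the same route as the paper's: establish that $\calc^0$ is exact by checking exactness of $\prod_{p\in U}$ of the stalk sequences on sections and then passing to stalks by direct limits, and propagate to all $k$ by induction, using the nine lemma on the commutative diagram built from the canonical sequences $0 \to \calq^k \to \calc^k \to \calq^{k+1} \to 0$ to show that the quotient step preserves exactness. The only differences are cosmetic: your nine-lemma diagram is the transpose of the paper's diagram \eqref{e:nine}, and you state the inductive hypothesis (exactness of the $\calq^k$ sequences) explicitly where the paper summarizes it as ``alternately taking $\calc^0$ and taking quotients preserves exactness.''
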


We say that the Godement functors $\calc^k(\ )$
are \term{exact functors} from sheaves to sheaves.\index{Godement functors!are exact functors}

\pf
For any point $p \in X$, the stalk $\cale_p$ is a subgroup
of the stalk $\calf_p$ with quotient group $\calg_p =
\calf_p/\cale_p$.
Interpreting $\calc^0\cale (U)$ as the direct product
$\prod_{p \in U} \cale_p$ of stalks over $U$,
it is easy to verify that for any open set $U \subset X$,
\begin{equation} \label{e:exact}
0 \to \calc^0\cale(U) \to \calc^0\calf(U) \to
 \calc^0\calg(U) \to 0
\end{equation}
is exact.
In general, the direct limit of exact sequences is exact
\cite[Ch.~2, Exer.~19, p.~33]{atiyah--macdonald}.
Taking the direct limit of \eqref{e:exact} over all
neighborhoods of a point $p \in X$, we obtain the
exact sequence of stalks
\[
0 \to (\calc^0\cale)_p \to (\calc^0\calf)_p \to
 (\calc^0\calg)_p \to 0
\]
for all $p \in X$.
Thus, the sequence of sheaves
\[
0 \to \calc^0\cale \to \calc^0\calf \to
 \calc^0\calg \to 0
\]
is exact.

Let $\calq_{\cale}$ be the quotient sheaf $\calc^0\cale/\cale$,
and similarly for $\calq_{\calf}$ and $\calq_{\calg}$.
Then there is a commutative diagram
\begin{equation} \label{e:nine}
\bfig
\xymatrix{
& 0 \ar[d] & 0 \ar[d] & 0 \ar[d] & \\
0 \ar[r] & \cale \ar[d] \ar[r] & \calc^0\cale \ar[d] \ar[r] &
\calq_{\cale} \ar[d] \ar[r] & 0 \\
0 \ar[r] & \calf \ar[d] \ar[r] & \calc^0\calf \ar[d] \ar[r] &
\calq_{\calf} \ar[d] \ar[r] & 0 \\
0 \ar[r] & \calg \ar[d] \ar[r] & \calc^0\calg \ar[d] \ar[r] &
\calq_{\calg} \ar[d] \ar[r] & 0, \\
& 0 & 0 & 0 &
}
\efig
\end{equation}
in which the three rows and the first two columns are
exact.
It follows by the nine lemma that the last
column is also exact.\footnote{To prove the nine lemma, view each column as
a differential complex.
Then the diagram~\eqref{e:nine} is a short exact
sequence of complexes.
Since the cohomology groups of the first two columns are
zero, the long exact cohomology sequence of the
short exact sequence implies that the cohomology
of the third column is also zero \cite[Th.~25.6, p.~285]{2tu_m}.}
Taking $\calc^0(\ )$ of the last column,
we obtain an exact sequence
\[
\xymatrix@R=10pt{
0 \ar[r] & \calc^0\calq_{\cale} \ar@{=}[d] \ar[r]
& \calc^0\calq_{\calf} \ar@{=}[d] \ar[r]
& \calc^0\calq_{\calg} \ar@{=}[d] \ar[r] & 0. \\
& \calc^1\cale & \calc^1\calf & \calc^1\calg&
}
\]

The Godement resolution is created by alternately taking $\calc^0$ and
taking quotients.
We have shown that each of these two operations preserves exactness.
Hence, the proposition follows by induction.
 \epf

\subsection{Cohomology with Coefficients in a Sheaf}

Let $\calf$ be a sheaf of abelian groups on a topological space $X$.
What is so special about the Godement resolution of $\calf$
is that it is completely canonical.
For any open set $U$ in $X$,
applying the sections functor $\Gamma(U,\ )$
to the Godement resolution of $\calf$ gives a complex
\begin{equation} \label{2e:complex}
0 \to \calf(U) \to \calc^0\calf(U) \to
\calc^1\calf(U) \to
\calc^2\calf(U) \to \cdots .
\end{equation}
In general, the $k$th \term{cohomology}\index{cohomology!of a complex} of a complex
\[
0 \to K^0 \overset{d}{\to} K^1 \overset{d}{\to} K^2 \to \cdots
\]
 will be denoted by
\[
h^k (K\bu) := \frac{\ker (d\colon K^k \to K^{k+1})}{\im (d\colon K^{k-1}
\to K^k)}.
\]
We sometimes write a complex $(K\bu, d)$ not as a sequence,
but as a direct sum $K\bu = \bigoplus_{k=0}^{\infty} K^k$,
with the understanding that $d\colon K^k \to K^{k+1}$
increases the degree by $1$ and $d \comp d = 0$.
The \term{cohomology of $U$ with coefficients in the sheaf
$\calf$},\index{cohomology!with coefficients in a sheaf}
 or the \term{sheaf cohomology of $\calf$ on $U$},\index{sheaf cohomology} is defined to be
the cohomology of the complex $\calc\bu \calf(U) =
\bigoplus_{k \ge 0} \calc^k\calf(U)$ of sections of the Godement
resolution of $\calf$
 (with the initial term $\calf(U)$ dropped from the
 complex \eqref{2e:complex}):
\[
H^k(U, \calf) := h^k \big(\calc\bu \calf(U)\big).
\]

\begin{prop} \label{2p:sections}
Let $\calf$ be a sheaf on a topological space $X$.
For any open set $U \subset X$, we have $H^0(U,\calf)
= \Gamma(U, \calf)$.
\end{prop}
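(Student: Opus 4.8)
$H^0(U,\calf) = \Gamma(U,\calf)$.

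The plan is to compute $H^0$ directly from its definition as the kernel of the first differential in the Godement complex. By definition, $H^0(U,\calf) = h^0\big(\calc\bu\calf(U)\big) = \ker\big(d^0\colon \calc^0\calf(U) \to \calc^1\calf(U)\big)$, since there is no incoming differential in degree zero. So the entire task reduces to identifying this kernel with $\calf(U)$.

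First I would recall the structure of the Godement resolution near its start. The construction gives the short exact sequence of sheaves
\[
0 \to \calf \to \calc^0\calf \to \calq^1 \to 0,
\]
where $\calq^1 = \calc^0\calf/\calf$, and then $\calc^1\calf := \calc^0\calq^1$. The differential $d^0\colon \calc^0\calf \to \calc^1\calf$ factors as the composition $\calc^0\calf \onto \calq^1 \into \calc^0\calq^1 = \calc^1\calf$, where the first map is the quotient projection and the second is the canonical inclusion $\calq^1 \into \calc^0\calq^1$ sending a section to its image under the sheafification-into-discontinuous-sections embedding $\theta$. The key observation is that this embedding $\calq^1 \into \calc^0\calq^1$ is \emph{injective} on sections over any open set $U$, because $\calc^0\calq^1(U) = \prod_{p\in U}(\calq^1)_p$ is the group of all (possibly discontinuous) sections of the \'etal\'e space, into which the genuine sections inject by taking germs.

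The main step is then the following chain of identifications at the level of sections over $U$. Since the inclusion $\calq^1(U) \into \calc^1\calf(U)$ is injective, the kernel of $d^0$ on $\calc^0\calf(U)$ equals the kernel of the projection $\calc^0\calf(U) \to \calq^1(U)$. Now I would invoke the left-exactness of the sections functor $\Gamma(U,\argbl)$, established earlier in the excerpt: applying it to the short exact sequence above yields the exact sequence
\[
0 \to \calf(U) \to \calc^0\calf(U) \to \calq^1(U),
\]
whence $\calf(U) = \ker\big(\calc^0\calf(U) \to \calq^1(U)\big)$. Combining these identifications gives
\[
H^0(U,\calf) = \ker d^0 = \ker\big(\calc^0\calf(U) \to \calq^1(U)\big) = \calf(U) = \Gamma(U,\calf),
\]
as desired.

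The only point requiring genuine care — the ``hard part,'' though it is modest here — is verifying that the inclusion $\calq^1(U)\into \calc^1\calf(U)$ is injective on sections, so that factoring $d^0$ through $\calq^1$ does not enlarge the kernel. This follows because for \emph{any} sheaf $\calg$ the natural map $\calg(U)\to \calc^0\calg(U)=\prod_{p\in U}\calg_p$ is injective (a section is determined by its germs, by the sheaf identity axiom, which is exactly the content of Exercise~\ref{1exer:sheafification}). Applying this with $\calg = \calq^1$ gives the needed injectivity, and the proposition is complete.
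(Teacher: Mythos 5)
Your proposal is correct and takes essentially the same route as the paper's proof: both identify $H^0(U,\calf)$ with $\ker\big(\calc^0\calf(U) \to \calc^1\calf(U)\big)$, factor this differential through $\calq^1(U)$, and conclude from the left-exactness of $\Gamma(U,\ )$ applied to $0 \to \calf \to \calc^0\calf \to \calq^1 \to 0$. The only cosmetic difference is that you justify the injectivity of $\calq^1(U) \hookrightarrow \calc^1\calf(U)$ directly from the identity axiom (a section of a sheaf is determined by its germs), whereas the paper cites the left-exactness of $\Gamma(U,\ )$ once more; these are the same observation.
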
%
\index{cohomology!in degree $0$}

\pf
If
\[
0 \to \calf \to \calc^0\calf \to \calc^1\calf \to \calc^2\calf \to
\cdots
\]
is the
Godement resolution of $\calf$, then by definition
\[
H^0(U, \calf) = \ker \big(d\colon \calc^0\calf(U) \to
  \calc^1\calf(U)\big).
\]
In the notation of the preceding subsection,
$d\colon \calc^0\calf(U) \to
\calc^1\calf(U)$ is induced from the composition
of sheaf maps
\[
\calc^0 \calf  \twoheadrightarrow \calq^1 \hookrightarrow \calc^1\calf.
\]
Thus, $d\colon \calc^0\calf(U) \to
\calc^1\calf(U)$ is the composition of
\[
\calc^0\calf(U) \to \calq^1(U) \hookrightarrow
\calc^1\calf(U).
\]
Note that the second map $\calq^1(U) \hookrightarrow
\calc^1\calf(U)$ is injective, because $\Gamma(U, \ )$
is a left-exact functor.
Hence,
\begin{align*}
H^0(U, \calf) &= \ker \big(\calc^0\calf(U) \to \calc^1\calf(U)\big)\\
&= \ker \big(\calc^0\calf(U) \to \calq^1(U)\big).
\end{align*}
But from the exactness of
\[
0 \to \calf(U) \to \calc^0\calf(U) \to
\calq^1(U),
\]
we see that
\[
\Gamma(U,\calf) = \calf(U) = \ker \big(\calc^0\calf(U) \to \calq^1(U) \big) =
H^0(U, \calf). 
\]
\epf

\subsection{Flasque Sheaves}

Flasque sheaves are a special kind of sheaf with
vanishing higher cohomology.  All Godement sheaves
turn out to be flasque sheaves.

\begin{defi}
A sheaf $\calf$ of abelian groups on a topological space
$X$ is \term{flasque}%
\index{flasque sheaves} (French for ``flabby'') if for every
open set $U \subset X$, the restriction map
$\calf(X) \to \calf(U)$ is surjective.
\end{defi}

For any sheaf $\calf$, the Godement sheaf $\calc^0\calf$
is clearly flasque because $\calc^0\calf(U)$ consists
of all discontinuous sections of the \'etal\'e space $E_{\calf}$
over $U$.
In the notation of the preceding subsection, $\calc^k\calf =
\calc^0\calq^k$, so all Godement sheaves $\calc^k\calf$ are flasque.%
\index{Godement sheaves!are flasque}

\begin{prop} \label{p:flasque}
\begin{enumerate}[(i)]
\item\label{ch2:prp224.i} In a short exact sequence of sheaves
\begin{equation} \label{e:short}
0 \to \cale \overset{i}{\to} \calf \overset{j}{\to} \calg \to 0
\end{equation}
over a topological space $X$, if $\cale$ is flasque, then
for any open set $U \subset X$,
the sequence of abelian groups
\[
0 \to \cale(U) \to \calf(U) \to \calg(U) \to 0
\]
is exact.
\item\label{ch2:prp224.ii} If $\cale$ and $\calf$ are flasque in \eqref{e:short},
then $\calg$ is flasque.
\item\label{ch2:prp224.iii} If
\begin{equation} \label{e:sheaves}
0 \to \cale \to \call^0 \to \call^1 \to \call^2 \to \cdots
\end{equation}
is an exact sequence of flasque sheaves on $X$, then for any
open set $U \subset X$ the sequence of abelian groups of sections
\begin{equation} \label{e:groups}
0 \to \cale(U) \to \call^0(U) \to \call^1(U) \to \call^2(U) \to \cdots
\end{equation}
is exact.
\end{enumerate}
\end{prop}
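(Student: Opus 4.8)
The plan is to prove the three parts in order, with part~\eqref{ch2:prp224.i} carrying essentially all the work and parts~\eqref{ch2:prp224.ii} and~\eqref{ch2:prp224.iii} following formally from it. Since left-exactness of the sections functor $\Gamma(U,\ )$ is already established, for~\eqref{ch2:prp224.i} the only thing left to prove is that $\calf(U) \to \calg(U)$ is surjective. Given $g \in \calg(U)$, I want to produce a preimage $f \in \calf(U)$. Because the sheaf sequence is exact, the map $\calf \to \calg$ is surjective on stalks, so $g$ lifts locally. I would then run a Zorn's lemma argument on the poset of pairs $(V, f_V)$, where $V \subset U$ is open and $f_V \in \calf(V)$ lifts $g|_V$, ordered by extension; a maximal element $(V, f_V)$ exists, and I claim $V = U$.

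Suppose not, and pick $p \in U \setminus V$ together with a local lift $f_W \in \calf(W)$ of $g$ over some neighborhood $W \subset U$ of $p$. On the overlap $V \cap W$ the difference $f_V - f_W$ maps to $0$ in $\calg$, so by left-exactness of $\Gamma(V\cap W,\ )$ it equals $i(e)$ for some $e \in \cale(V \cap W)$. This is exactly where flasqueness of $\cale$ enters: the section $e$ extends to a section over $X$, which I restrict to $W$ and use to correct $f_W$ to $f_W' := f_W + i(e|_W)$, so that $f_W'$ now agrees with $f_V$ on $V \cap W$. The two sections then glue to a lift over $V \cup W$, contradicting maximality of $V$. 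Hence $V = U$, and $g$ lifts.

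For~\eqref{ch2:prp224.ii} I would combine~\eqref{ch2:prp224.i} with flasqueness of $\calf$: given $g \in \calg(U)$, part~\eqref{ch2:prp224.i} (applicable since $\cale$ is flasque) lifts $g$ to $f \in \calf(U)$; flasqueness of $\calf$ extends $f$ to $\tilde f \in \calf(X)$; and $j(\tilde f) \in \calg(X)$ restricts to $j(f) = g$ on $U$ by compatibility of $j$ with restriction, so $\calg(X) \to \calg(U)$ is surjective. For~\eqref{ch2:prp224.iii} I would break the long exact sequence into short ones via the sheaves $\calk^k := \ker(\call^k \to \call^{k+1}) = \im(\call^{k-1} \to \call^k)$, with $\calk^0 = \cale$, giving short exact sequences $0 \to \calk^k \to \call^k \to \calk^{k+1} \to 0$. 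Since $\cale = \calk^0$ is flasque, part~\eqref{ch2:prp224.i} makes the first sequence of sections exact and part~\eqref{ch2:prp224.ii} makes $\calk^1$ flasque; inductively every $\calk^k$ is flasque and every associated sequence of sections is short exact. Splicing these back together, using that $\call^k(U) \to \call^{k+1}(U)$ factors as $\call^k(U) \onto \calk^{k+1}(U) \into \call^{k+1}(U)$, yields exactness of the long sequence of sections at each spot.

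I expect the main obstacle to be the surjectivity in part~\eqref{ch2:prp224.i}: the Zorn's lemma extension argument, and in particular the precise way flasqueness of $\cale$ is used to reconcile two local lifts on their overlap so that they glue into a larger lift. Everything downstream — parts~\eqref{ch2:prp224.ii} and~\eqref{ch2:prp224.iii} — is then formal bookkeeping with short exact sequences.
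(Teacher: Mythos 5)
Your proposal is correct and follows essentially the same route as the paper's proof: the paper also reduces part~(i) to surjectivity of $j$ and runs a maximality argument (phrased as taking the largest union admitting a lift of $g$, which your explicit Zorn's lemma on pairs $(V,f_V)$ simply makes precise), uses flasqueness of $\cale$ to correct one local lift so that it agrees with the other on the overlap before gluing, deduces~(ii) by lifting via~(i) and extending through the flasque $\calf$, and proves~(iii) by splitting the long exact sequence into short exact sequences whose first two terms are flasque and inducting with~(i) and~(ii). The only cosmetic differences are that the paper writes the splitting sheaves in~(iii) as quotients $\calq^k$ rather than your kernels $\calk^k$ (isomorphic by exactness) and presents~(ii) as a diagram chase rather than an element-wise lift.
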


\pf
\begin{enumerate}[(i)]
\item To simplify the notation, we will use $i$ to denote $i_U\colon
\cale(U) \to \calf(U)$ for all $U$; similarly, $j = j_U$.
As noted in Section~\ref{s:exact}, the exactness of
\begin{equation} \label{e:sections}
0 \to \cale(U) \overset{i}{\to} \calf(U) \overset{j}{\to} \calg(U)
\end{equation}
is true in general, whether $\cale$ is flasque or not.
To prove the surjectivity of $j$ for a flasque $\cale$, let $g \in \calg(U)$.
Since $\calf \to \calg$ is surjective as a sheaf map,
all stalk maps $\calf_p \to \calg_p$ are surjective.
Hence, every point $p \in U$ has a neighborhood $U_{\ga} \subset U$
on which there exists a section $f_{\ga} \in \calf(U_{\ga})$ such
that $j(f_{\ga}) = g|_{U_{\ga}}$.

Let $V$ be the largest union $\bigcup_{\ga} U_{\ga}$ on which
there is a section $f_V \in \calf(V)$ such that $j(f_V) = g|_V$.
We claim that $V = U$.
If not, then there are a set $U_{\ga}$ not contained in $V$ and $f_{\ga} \in
\calf(U_{\ga})$ such that $j(f_{\ga}) = g|_{U_{\ga}}$.
On $V \cap U_{\ga}$, writing $j$ for $j_{V \cap U_{\ga}}$, we have
\[
j(f_V - f_{\ga}) =0.
\]
By the exactness of the sequence \eqref{e:sections} at $\calf(V \cap U_{\ga})$,
\[
f_V - f_{\ga} = i(e_{V,\ga}) \ \text{for some } e_{V,\ga} \in \cale(V
\cap U_{\ga}).
\]
Since $\cale$ is flasque, one can find a section $e_U
\in \cale(U)$ such that $e_U|_{V \cap U_{\ga}} =
e_{V,\ga}$.

On $V \cap U_{\ga}$,
\[
f_V = i(e_{V,\ga}) + f_{\ga}.
\]
If we modify $f_{\ga}$ to
\[
\bar{f}_{\ga} = i(e_U) + f_{\ga} \quad \text{on }
U_{\ga},
\]
then $f_V = \bar{f}_{\ga}$ on $V \cap U_{\ga}$,
and $j(\bar{f}_{\ga})=g|_{U_{\ga}}$.
By the gluing axiom for the sheaf $\calf$,
the elements $f_V$ and $\bar{f}_{\ga}$ piece together to give an element
$f \in \calf(V \cup U_{\ga})$ such that
$j(f) = g|_{V \cup U_{\ga}}$.
This contradicts the maximality of $V$.
Hence, $V = U$ and $j\colon \calf(U) \to \calg(U)$ is onto.

\item Since $\cale$ is flasque, for any open set $U \subset X$
the rows of the commutative diagram
\[
\xymatrix{
0 \ar[r] & \cale(X) \ar[d]_-{\ga} \ar[r] & \calf(X) \ar[d]^-{\beta}
\ar[r]^-{j_X} & \calg(X) \ar[d]^-{\gamma} \ar[r] & 0 \\
0 \ar[r] & \cale(U)  \ar[r] & \calf(U)
\ar[r]^-{j_U} & \calg(U)  \ar[r] & 0
}
\]
are exact by \ref{ch2:prp224.i}, where $\ga$, $\gb$, and $\gamma$ are the restriction maps.
Since $\calf$ is flasque, the map $\beta\colon \calf(X) \to
\calf(U)$ is surjective.
Hence,
\[
j_U \comp \beta = \gamma \comp j_X \colon \calf(X) \to \calg(X) \to \calg(U)
\]
is surjective.
Therefore, $\gamma\colon \calg(X) \to \calg(U)$ is surjective.
This proves that $\calg$ is flasque.

\item  The long exact sequence \eqref{e:sheaves} is equivalent to a
collection
of short exact sequences
\begin{align}
0 \to \cale \to &\call^0 \to \calq^0 \to 0, \label{e:flasque1}\\
0 \to \calq^0 \to &\call^1 \to \calq^1 \to 0, \label{e:flasque2}\\
& \cdots  . \notag
\end{align}
In \eqref{e:flasque1}, the first two sheaves are flasque,
so $\calq^0$ is flasque by \ref{ch2:prp224.ii}.
Similarly, in \eqref{e:flasque2}, the first two sheaves are flasque,
so $\calq^1$ is flasque.
By induction, all the sheaves $\calq^k$ are flasque.

By \ref{ch2:prp224.i}, the functor $\Gamma(U,\ )$ transforms the short
exact sequences of sheaves into short exact sequences
of abelian groups
\begin{align*}
0 \to \cale(U) \to &\call^0(U) \to \calq^0(U) \to 0,\\
0 \to \calq^0(U) \to &\call^1(U) \to \calq^1(U) \to 0, \\
& \cdots  . \notag
\end{align*}
These short exact sequences splice together into the
long exact sequence \eqref{e:groups}.
\end{enumerate}
 \epf

\begin{cor}\label{c:flasque_acyclic}
Let $\cale$ be a flasque sheaf on a topological space $X$.
For every open set $U \subset X$ and every $k > 0$,
the cohomology $H^k(U, \cale)= 0$.\index{cohomology!of a flasque sheaf}\index{flasque
sheaves!cohomology of}
\end{cor}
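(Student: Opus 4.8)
The plan is to show that a flasque sheaf $\cale$ has a flasque Godement resolution and then invoke Proposition~\ref{p:flasque}\ref{ch2:prp224.iii} to conclude that the complex of sections is exact, which is exactly the vanishing of $H^k(U,\cale)$ for $k>0$. First I would write down the Godement resolution of $\cale$,
\[
0 \to \cale \to \calc^0\cale \to \calc^1\cale \to \calc^2\cale \to \cdots .
\]
By the discussion preceding the Flasque Sheaves subsection, every Godement sheaf $\calc^k\cale$ is flasque, and of course $\cale$ itself is flasque by hypothesis. Thus the displayed sequence is an exact sequence of flasque sheaves.

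Next I would apply Proposition~\ref{p:flasque}\ref{ch2:prp224.iii} directly to this resolution: for any open set $U\subset X$, the functor $\Gamma(U,\ )$ transforms it into an exact sequence of abelian groups
\[
0 \to \cale(U) \to \calc^0\cale(U) \to \calc^1\cale(U) \to \calc^2\cale(U) \to \cdots .
\]
The exactness of this sequence of sections is the crux of the argument, and it is precisely what part~\ref{ch2:prp224.iii} delivers, since all the sheaves involved are flasque.

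Finally I would translate this exactness back into the language of cohomology. By definition, $H^k(U,\cale)=h^k\big(\calc\bu\cale(U)\big)$, the $k$th cohomology of the complex of sections with the initial term $\cale(U)$ dropped. For $k>0$ the exactness of the sequence above says $\ker\big(\calc^k\cale(U)\to\calc^{k+1}\cale(U)\big)=\im\big(\calc^{k-1}\cale(U)\to\calc^k\cale(U)\big)$, so the corresponding cohomology group vanishes. Hence $H^k(U,\cale)=0$ for all $k>0$.

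The main obstacle, such as it is, has already been surmounted in Proposition~\ref{p:flasque}: the real content is the surjectivity on sections in part~\ref{ch2:prp224.i} (the Zorn-type maximality argument) and its propagation through the resolution in part~\ref{ch2:prp224.iii}. Given those results, this corollary is essentially a bookkeeping step—assembling the right exact sequence and reading off the definition of sheaf cohomology—so I expect no genuine difficulty beyond correctly matching indices when passing from exactness of the section complex to the statement that each higher cohomology group is zero.
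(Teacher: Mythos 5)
Your proposal is correct and follows exactly the paper's own argument: form the Godement resolution of $\cale$, observe it is an exact sequence of flasque sheaves, apply Proposition~\ref{p:flasque}\ref{ch2:prp224.iii} to get exactness of the section complex over $U$, and read off the vanishing of $H^k(U,\cale)$ for $k>0$ from the definition of sheaf cohomology. There is nothing to add; the index bookkeeping you flag at the end works exactly as you describe.
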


\pf
Let
\[
0 \to \cale \to \calc^0\cale \to \calc^1\cale \to \calc^2\cale \to
\cdots
\]
be the Godement resolution of $\cale$.
It is an exact sequence of flasque sheaves.
By Proposition~\ref{p:flasque}\ref{ch2:prp224.iii}, the sequence of groups of
sections
\[
0 \to \cale(U)\to \calc^0\cale(U) \to \calc^1\cale(U) \to \calc^2\cale(U) \to
\cdots
\]
is exact.  It follows from the definition of sheaf cohomology that
\[
H^k(U,\cale) =\begin{cases}
\ \cale(U) &\text{for } k = 0, \\
\ 0 & \text{for } k > 0. 
\end{cases}
\]
\epf

A sheaf $\calf$ on a topological space $X$ is said to be
\term{acyclic}\index{acyclic sheaf!on an open set} on $U \subset X$ if $H^k(U, \calf) =0$ for
all $k > 0$.
Thus,
a flasque sheaf on $X$ is acyclic on every open set of $X$.\index{flasque sheaves!are acyclic}

\begin{exa} \label{exam:irreducible}
Let $X$ be an irreducible complex algebraic variety with the Zariski topology.
Recall that the constant sheaf $\underline{\C}$ over $X$ is the
sheaf of locally constant functions on $X$ with values in $\C$.
Because any two open sets in the Zariski topology of $X$ have a nonempty
intersection, the only continuous sections of the constant
sheaf $\underline{\C}$ over any open set $U$ are the constant functions.
Hence, $\underline{\C}$ is flasque.
By Corollary~\ref{c:flasque_acyclic}, $H^k(X, \underline{\C}) = 0$ for all $k > 0$.%
\index{cohomology!of a constant sheaf with Zariski\\ topology}%
\index{constant sheaf!cohomology of}
\end{exa}

\begin{cor} \label{c:functor}
Let $U$ be an open subset of a topological space $X$.
The $k$th Godement sections
functor $\Gamma(U, \calc^k(\ ))$,\index{Godement sections functor} which assigns to a sheaf $\calf$
on $X$ the group $\Gamma(U, \calc^k\calf)$ of sections
of $\calc^k\calf$ over $U$, is an exact functor from sheaves on $X$
to abelian groups.\index{Godement sections functor!is an exact functor}
\end{cor}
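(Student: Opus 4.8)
The plan is to recognize the functor $\Gamma(U, \calc^k(\ ))$ as a composite of two functors whose effect on short exact sequences has already been analyzed, and then to assemble the two relevant propositions. First I would note that $\Gamma(U, \calc^k(\ ))$ really is a functor from sheaves on $X$ to abelian groups: it is the composition of the $k$th Godement functor $\calc^k(\ )$, which sends sheaves to sheaves, with the global sections functor $\Gamma(U, \ )$. Functoriality being automatic, the whole content of the corollary is the exactness claim.

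To prove exactness, I would begin with an arbitrary short exact sequence of sheaves
\[
0 \to \cale \to \calf \to \calg \to 0
\]
on $X$ and apply the $k$th Godement functor. By Proposition~\ref{p:godement}, the resulting sequence of sheaves
\[
0 \to \calc^k\cale \to \calc^k\calf \to \calc^k\calg \to 0
\]
is again exact. The crucial structural input is that its leftmost term $\calc^k\cale$ is flasque: every Godement sheaf is flasque, being of the form $\calc^0(\,\cdot\,)$, i.e., a sheaf of all (possibly discontinuous) sections of an \'etal\'e space. With a flasque sheaf on the left, Proposition~\ref{p:flasque}\ref{ch2:prp224.i} upgrades the merely left-exact behaviour of $\Gamma(U, \ )$ to full exactness, yielding a short exact sequence of abelian groups
\[
0 \to \Gamma(U, \calc^k\cale) \to \Gamma(U, \calc^k\calf) \to \Gamma(U, \calc^k\calg) \to 0.
\]
This is precisely the assertion that $\Gamma(U, \calc^k(\ ))$ is exact.

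I do not expect a genuine obstacle, since both ingredients are already in hand: Proposition~\ref{p:godement} supplies exactness at the level of sheaves, and Proposition~\ref{p:flasque}\ref{ch2:prp224.i} supplies the surjectivity on the right that the sections functor by itself fails to provide. The one point demanding care is that it is the flasqueness of the \emph{first} sheaf $\calc^k\cale$ in the short exact sequence --- not of $\calc^k\calf$ or $\calc^k\calg$ --- that Proposition~\ref{p:flasque}\ref{ch2:prp224.i} requires; this is exactly what is available, because \emph{all} Godement sheaves are flasque.
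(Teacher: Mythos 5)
Your proposal is correct and follows exactly the paper's own argument: apply Proposition~\ref{p:godement} to get a short exact sequence of Godement sheaves, then invoke the flasqueness of $\calc^k\cale$ together with Proposition~\ref{p:flasque}\ref{ch2:prp224.i} to conclude exactness of the sequence of sections. Your added remark that the flasqueness of the \emph{first} term is what matters, guaranteed because all Godement sheaves are of the form $\calc^0(\,\cdot\,)$, is precisely the point the paper relies on as well.
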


\pf
Let
\[
0 \to \cale \to \calf \to \calg \to 0
\]
be an exact sequence of sheaves.
By Proposition~\ref{p:godement}, for any $k \ge 0$,
\[
0 \to \calc^k\cale \to \calc^k\calf \to \calc^k\calg \to 0
\]
is an exact sequence of sheaves.
Since $\calc^k\cale$ is flasque, by
Proposition~\ref{p:flasque}\ref{ch2:prp224.i},
\[
0 \to \Gamma(U, \calc^k\cale) \to \Gamma(U, \calc^k\calf)
\to \Gamma(U, \calc^k\calg) \to 0
\]
is an exact sequence of abelian groups.
Hence, $\Gamma\big(U, \calc^k(\ )\big)$ is an exact functor
from sheaves to groups.
 \epf

Although we do not need it, the following theorem is a fundamental
property of sheaf cohomology.

\begin{theorem} \label{t:long}
A short exact sequence
\[
0 \to \cale \to \calf \to \calg \to 0
\]
of sheaves of abelian groups on a topological space $X$
induces a long exact sequence in sheaf cohomology,
\[
\cdots \to H^k(X, \cale) \to H^k(X, \calf) \to H^k(X, \calg) \to
H^{k+1}(X, \cale) \to \cdots .
\]
\end{theorem}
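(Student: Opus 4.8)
The plan is to reduce the statement to the standard long exact cohomology sequence attached to a short exact sequence of cochain complexes. Recall that by definition $H^k(X,\calf) = h^k\big(\calc\bu\calf(X)\big)$, the cohomology of the complex of global sections of the Godement resolution of $\calf$, and similarly for $\cale$ and $\calg$. So it suffices to manufacture a short exact sequence of complexes whose three cohomologies are $H\bu(X,\cale)$, $H\bu(X,\calf)$, and $H\bu(X,\calg)$, and then invoke the zig-zag lemma.

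First I would apply the Godement functors levelwise. By Proposition~\ref{p:godement}, for each $k \ge 0$ the sequence of sheaves
\[
0 \to \calc^k\cale \to \calc^k\calf \to \calc^k\calg \to 0
\]
is exact, and these assemble into a commutative ladder relating the three Godement resolutions. Next I would take global sections over $X$. The crucial point --- and what distinguishes the Godement resolution from an arbitrary resolution --- is that taking sections of the Godement sheaves preserves exactness: by Corollary~\ref{c:functor} the functor $\Gamma\big(X, \calc^k(\ )\big)$ is exact for every $k$, since $\calc^k\cale$ is flasque and Proposition~\ref{p:flasque}\ref{ch2:prp224.i} applies. Hence for each $k$ we obtain a short exact sequence of abelian groups
\[
0 \to \calc^k\cale(X) \to \calc^k\calf(X) \to \calc^k\calg(X) \to 0,
\]
and because these maps commute with the Godement differentials, they fit together into a short exact sequence of cochain complexes
\[
0 \to \calc\bu\cale(X) \to \calc\bu\calf(X) \to \calc\bu\calg(X) \to 0.
\]

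Finally, the long exact cohomology sequence associated to a short exact sequence of complexes --- the same homological mechanism invoked for the nine lemma earlier, via a diagram chase producing the connecting map $H^k(X,\calg) \to H^{k+1}(X,\cale)$ --- yields exactly
\[
\cdots \to H^k(X, \cale) \to H^k(X, \calf) \to H^k(X, \calg) \to H^{k+1}(X, \cale) \to \cdots .
\]

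I expect no genuine obstacle at the level of sheaves, because all the substance has been front-loaded into Corollary~\ref{c:functor}. The one pitfall worth flagging is that one cannot shortcut by applying $\Gamma(X,\ )$ directly to the original sequence $0 \to \cale \to \calf \to \calg \to 0$: since $\Gamma(X,\ )$ is only left exact, the map $\calf(X) \to \calg(X)$ need not be surjective, so there is no short exact sequence of sections to feed into the zig-zag lemma. The flasqueness of the Godement sheaves is precisely what restores right-exactness once one has passed to the resolutions. Thus the conceptual content is the observation that one must replace each sheaf by its Godement resolution \emph{before} taking sections; everything after that is formal homological algebra.
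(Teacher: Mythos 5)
Your proposal is correct and follows essentially the same route as the paper: the paper's proof likewise invokes the exactness of the Godement sections functor $\Gamma\big(X, \calc^k(\ )\big)$ (Corollary~\ref{c:functor}, which packages your appeal to Proposition~\ref{p:godement} and flasqueness) to obtain the short exact sequence of complexes $0 \to \calc\bu\cale(X) \to \calc\bu\calf(X) \to \calc\bu\calg(X) \to 0$, and then applies the standard long exact cohomology sequence of a short exact sequence of complexes. Your remark about why one cannot apply $\Gamma(X,\ )$ directly to the sheaf sequence is a sound observation, though not needed for the argument itself.
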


\pf
Because the Godement sections functor $\Gamma\big(X, \calc^k(\ )\big)$ is
exact,
from the given short exact sequence of sheaves one obtains a short
exact sequence of complexes of global sections of Godement sheaves
\[
0 \to \calc\bu\cale(X) \to \calc\bu\calf(X) \to \calc\bu\calg(X) \to 0.
\]
The long exact sequence in cohomology
\cite[Sec.~25]{2tu_m} associated to this short exact
sequence of complexes is the desired long exact sequence in sheaf
cohomology.
 \epf

\subsection{Cohomology Sheaves and Exact Functors}\label{ss:cohomsheaves}
As before, a sheaf will mean a sheaf of abelian groups
on a topological space $X$.
A \term{complex of sheaves}\index{complex of sheaves} $\call^{\bullet}$ on $X$ is a sequence
of sheaves
\[
0 \to \call^0 \overset{d}{\to} \call^1 \overset{d}{\to} \call^2 \overset{d}{\to} \cdots
\]
on $X$ such that $d \comp d = 0$.
Denote the kernel and image sheaves of $\call^{\bullet}$ by
\begin{align*}
\calz^k &:= \calz^k(\call^{\bullet}) := \ker \big(d\colon \call^k \to \call^{k+1}\big),\\
\calb^k &:= \calb^k(\call^{\bullet}) := \im \big(d\colon \call^{k-1} \to \call^k\big).
\end{align*}
Then the \term{cohomology sheaf}\index{cohomology sheaves!of a complex of sheaves}%
\index{complex of sheaves!cohomology sheaves of}
 $\calh^k := \calh^k (\call^{\bullet})$ of the complex $\call^{\bullet}$ is the quotient sheaf
\[
\calh^k := \calz^k /\calb^k.
\]
For example, by the Poincar\'e lemma, the complex
\[
0 \to \cala^0 \to \cala^1 \to \cala^2 \to \cdots
\]
of sheaves of $\cinf$-forms on a manifold $M$ has cohomology sheaves
\[
\calh^k = \calh^k(\cala^{\bullet}) = \begin{cases}
\ \underline{\R} &\text{for } k=0,\\
\ 0 &\text{for } k > 0.
\end{cases}
\]

\begin{prop} \label{p:stalkcohomology}
Let $\call\bu$ be a complex of sheaves on a topological space $X$.
The stalk of its cohomology sheaf $\calh^k$ at a point $p$ is
the $k$th cohomology of the complex $\call_p\bu$ of stalks.%
\index{stalk!of a cohomology sheaf}\index{cohomology sheaves!stalk}
\end{prop}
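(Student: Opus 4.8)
The plan is to exploit the exactness of the stalk functor $\mathcal{G} \mapsto \mathcal{G}_p$. Since $\mathcal{G}_p = \varinjlim_{U \ni p} \mathcal{G}(U)$ is a filtered direct limit of sections groups, and filtered direct limits of abelian groups are exact (the very fact already invoked via \cite[Ch.~2, Exer.~19, p.~33]{atiyah--macdonald} in the proof of Proposition~\ref{p:godement}), the functor $(\argbl)_p$ carries exact sequences of sheaves to exact sequences of abelian groups; in particular it commutes with the formation of kernels, images, and quotients. The strategy is to apply this to the three sheaves $\calz^k$, $\calb^k$, and $\calh^k$ in turn, and to read off each stalk as the corresponding object built from the stalk maps $d_p\colon \call_p^k \to \call_p^{k+1}$.

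First I would treat the kernel sheaf. By definition $\calz^k = \ker(d\colon \call^k \to \call^{k+1})$, so the sequence $0 \to \calz^k \to \call^k \xrightarrow{d} \call^{k+1}$ is exact. Applying $(\argbl)_p$ gives an exact sequence $0 \to (\calz^k)_p \to \call_p^k \xrightarrow{d_p} \call_p^{k+1}$, whence $(\calz^k)_p = \ker(d_p)$. For the image sheaf, the only point to watch is that $\calb^k = \im(d\colon \call^{k-1}\to\call^k)$ is by definition the sheafification of the presheaf image, so a priori one must check that its stalk is the honest image of $d_p$. But sheafification does not change stalks, and $d$ factors through its image sheaf as an epimorphism followed by a monomorphism $\call^{k-1} \twoheadrightarrow \calb^k \hookrightarrow \call^k$; applying the exact functor $(\argbl)_p$ preserves both the surjection and the injection, so $\call_p^{k-1} \twoheadrightarrow (\calb^k)_p \hookrightarrow \call_p^k$ and therefore $(\calb^k)_p = \im(d_p)$. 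Because $d \comp d = 0$, the subsheaf $\calb^k$ is contained in $\calz^k$, and these identifications are compatible with that inclusion.

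Finally, the cohomology sheaf $\calh^k = \calz^k/\calb^k$ sits in a short exact sequence $0 \to \calb^k \to \calz^k \to \calh^k \to 0$. Applying $(\argbl)_p$ once more yields the exact sequence $0 \to (\calb^k)_p \to (\calz^k)_p \to (\calh^k)_p \to 0$, so that
\[
(\calh^k)_p = (\calz^k)_p / (\calb^k)_p = \ker(d_p)/\im(d_p) = h^k(\call_p\bu),
\]
as desired.

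I do not expect a genuine obstacle: once the exactness of the stalk functor is in hand, each of the three steps is a one-line application of it. The single subtlety worth flagging is the image-sheaf step, where passage through sheafification could in principle alter the stalk; the remedy is simply that sheafification is stalk-preserving, or equivalently that one works with the epi--mono factorization of $d$ rather than with the presheaf image directly.
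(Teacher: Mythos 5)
Your proposal reaches the right conclusion by essentially the paper's own route: the paper's proof consists of citing \cite[Ch.~II, Exer.~1.2(a), p.~66]{hartshorne} for the identities $\calz_p^k = \ker\big(d_p\colon \call_p^k \to \call_p^{k+1}\big)$ and $\calb_p^k = \im\big(d_p\colon \call_p^{k-1}\to\call_p^k\big)$, and then passing to the quotient, $\calh_p^k = (\calz^k/\calb^k)_p = \calz_p^k/\calb_p^k = h^k(\call_p\bu)$ --- precisely your kernel--image--quotient decomposition.

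The one step whose justification fails as written is your opening claim that the stalk functor is exact ``because $\calg_p = \varinjlim_{U\ni p}\calg(U)$ is a filtered direct limit of section groups and filtered direct limits are exact.'' That deduction would require the sequences of sections $0 \to \cale(U)\to\calf(U)\to\calg(U)\to 0$ to be exact before passing to the limit, and surjectivity of $\calf(U)\to\calg(U)$ is precisely what fails for a general short exact sequence of sheaves; the paper emphasizes this point in Section~\ref{s:exact}. (The same reasoning would ``prove'' that the composite of $\Gamma(U,\ )$ with any exact functor is exact, yet $\Gamma(U,\ )$ is only left exact.) The fact you need is nevertheless true and is already on record in the paper: Section~\ref{s:exact} states, citing \cite[Exer.~1.2, p.~66]{hartshorne}, that a sequence of sheaves is exact if and only if every sequence of stalk maps is exact, and that equivalence is literally the exactness of $(\argbl)_p$. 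Alternatively, the repair you supply for the image sheaf --- sheafification does not change stalks --- works uniformly: the filtered-colimit argument applies correctly to the presheaf kernel, presheaf image, and presheaf quotient, whose sections genuinely are the kernels, images, and quotients of the section maps, and sheafifying leaves the resulting stalks unchanged. With that citation (or that repair) in place of your first paragraph, the remaining three steps are sound and coincide with the paper's proof.
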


\pf
Since
\[
\calz_p^k = \ker \big(d_p\colon \call_p^k \to \call_p^{k+1}\big)
\ \text{ and }\ \calb_p^k =
\im \big(d_p\colon \call_p^{k-1} \to \call_p^k \big)
\]
(see \cite[Ch.~II, Exer.~1.2(a), p.~66]{hartshorne}),
one can also compute the stalk of the cohomology sheaf $\calh^k$ by
computing
\[
\calh_p^k = (\calz^k/\calb^k)_p = \calz_p^k /\calb_p^k =
h^k(\call_p\bu),
\]
the cohomology of the sequence of stalk maps of $\call\bu$ at $p$.
 \epf

Recall that a \term{morphism}\index{morphism!of complexes of sheaves} $\varphi\colon \calf\bu \to \calg\bu$
of complexes of sheaves is a collection of sheaf maps
$\varphi^k\colon \calf^k \to \calg^k$ such that
$\varphi^{k+1} \comp d = d \comp \varphi^k$ for all $k$.
A morphism $\varphi\colon \calf\bu \to \calg\bu$ of complexes
of sheaves induces morphisms
$\varphi^k\colon \calh^k(\calf\bu) \to \calh^k(\calg\bu)$
of cohomology sheaves.
The morphism $\varphi\colon \calf\bu \to \calg\bu$ of complexes
of sheaves is called a \term{quasi-isomorphism}\index{quasi-isomorphism}%
\index{morphism!of complexes of sheaves!quasi-isomorphism} \label{p:quasi}
if the induced morphisms $\varphi^k\colon \calh^k(\calf\bu)\to
\calh^k(\calg\bu)$ of cohomology sheaves are isomorphisms
for all $k$.

\begin{prop} \label{p:exact}
Let $\call\bu = \bigoplus_{k \ge 0} \call^k$ be a complex of sheaves on a topological space $X$.
If $T$ is an exact functor from sheaves on $X$
to abelian groups, then it commutes with cohomology:\index{exact functor!commutes with cohomology}
\[
T \big(\calh^k(\call\bu)\big) = h^k \big(T(\call\bu)\big).
\]
\end{prop}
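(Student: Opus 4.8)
The plan is to reduce the statement to the single fact that an exact functor carries injections to injections and surjections to surjections, by slicing the complex $\call\bu$ into the short exact sequences built from its kernel, image, and cohomology sheaves, and then transporting everything through $T$.

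First I would record the two families of short exact sequences attached to the complex. Factoring each differential $d\colon \call^k \to \call^{k+1}$ through its image sheaf $\calb^{k+1} = \im(d)$ gives the short exact sequence
\[
0 \to \calz^k \to \call^k \overset{d}{\to} \calb^{k+1} \to 0,
\]
while the definition $\calh^k = \calz^k/\calb^k$ gives
\[
0 \to \calb^k \to \calz^k \to \calh^k \to 0.
\]
Since $T$ is exact, applying it to each of these yields a short exact sequence of abelian groups; in particular, applying $T$ to $0 \to \calb^{k+1} \to \call^{k+1} \to \call^{k+1}/\calb^{k+1} \to 0$ shows that $T$ preserves the injection $\calb^{k+1} \hookrightarrow \call^{k+1}$ as well.

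The main computation is to identify the kernel and image of the differentials of the complex $T(\call\bu)$. The differential $T(\call^k) \to T(\call^{k+1})$ factors as $T(\call^k) \twoheadrightarrow T(\calb^{k+1}) \hookrightarrow T(\call^{k+1})$, the first map surjective and the second injective, both by exactness of $T$. Hence its kernel coincides with the kernel of $T(\call^k) \to T(\calb^{k+1})$, which by the first exact sequence above is exactly $T(\calz^k)$. The same factorization one degree down shows that the image of $T(\call^{k-1}) \to T(\call^k)$ is the subgroup $T(\calb^k) \subset T(\call^k)$. Therefore
\[
h^k\big(T(\call\bu)\big) = T(\calz^k)/T(\calb^k).
\]

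Finally I would apply $T$ to the second family $0 \to \calb^k \to \calz^k \to \calh^k \to 0$; its exactness gives $T(\calh^k) = T(\calz^k)/T(\calb^k)$, matching the displayed quotient and completing the proof. The only point that requires care is keeping the two factorizations of the differential straight and confirming that the kernel of the corestricted map $T(\call^k) \to T(\calb^{k+1})$ agrees with the kernel of the full differential into $T(\call^{k+1})$; this holds precisely because $T$ preserves the injection $T(\calb^{k+1}) \hookrightarrow T(\call^{k+1})$. There is no genuine obstacle beyond this bookkeeping, since the exactness of $T$ does all the heavy lifting.
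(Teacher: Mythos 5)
Your proposal is correct and follows essentially the same route as the paper: both identify the cocycles and coboundaries of $T(\call\bu)$ with $T(\calz^k)$ and $T(\calb^k)$ by factoring each differential as a surjection onto its image sheaf followed by an injection (using that an exact functor preserves surjections and injections), and then apply $T$ to the short exact sequence $0 \to \calb^k \to \calz^k \to \calh^k \to 0$ to conclude. The only cosmetic difference is that you work exclusively with short exact sequences, so that only the stated definition of exactness of $T$ is ever invoked, whereas the paper applies $T$ directly to the left-exact sequence $0 \to \calz^k \to \call^k \to \call^{k+1}$ to identify the cocycles.
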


\pf
We first prove that $T$ commutes with cocycles and coboundaries.
Applying the exact functor $T$ to the exact sequence
\[
0 \to \calz^k \to \call^k \overset{d}{\to} \call^{k+1}
\]
results in the exact sequence
\[
0 \to T(\calz^k) \to T(\call^k) \overset{d}{\to} T(\call^{k+1}),
\]
which proves that
\[
Z^k\big(T(\call\bu)\big) := \ker\big( T(\call^k) \overset{d}{\to}
T(\call^{k+1}) \big) = T(\calz^k).
\]
(By abuse of notation, we write the differential of $T(\call\bu)$ also
as $d$, instead of $T(d)$.)

The differential $d\colon \call^{k-1} \to \call^k$ factors into
a surjection $\call^{k-1} \twoheadrightarrow \calb^k$ followed by an injection
$\calb^k \hookrightarrow \call^k$:
\[
\xymatrix{
\call^{k-1} \ar[rr]^-d \ar@{->>}[dr]&&\call^k. \\
& \calb^k \ar@{^{(}->}[ur]
}
\]
Since an exact functor preserves surjectivity and injectivity,
applying $T$ to the diagram above yields a commutative
diagram
\[
\xymatrix{
T(\call^{k-1}) \ar[rr]^-d \ar@{->>}[dr]&&T(\call^k), \\
& T(\calb^k) \ar@{>->}[ur]
}
\]
which proves that
\[
B^k\big(T(\call\bu)\big) := \im\big( T(\call^{k-1}) \overset{d}{\to}
T(\call^k) \big) = T(\calb^k).
\]

Applying the exact functor $T$ to the exact sequence
of sheaves
\[
0  \to \calb^k \to \calz^k \to \calh^k \to 0
\]
gives the exact sequence of abelian groups
\[
0  \to T(\calb^k) \to T(\calz^k) \to T(\calh^k) \to 0.
\]
Hence,
\[
T\big(\calh^k(\call\bu)\big) = T(\calh^k)
= \frac{T(\calz^k)}{T(\calb^k)}
= \frac{Z^k\big(T(\call\bu)\big)}{B^k\big(T(\call\bu)\big)}
= h^k\big(T(\call\bu)\big). 
\]
\epf

\subsection{Fine Sheaves} \label{ss:fine}

We have seen that flasque sheaves on a topological space $X$
are acyclic on any open subset of $X$.
Fine sheaves constitute another important class of such sheaves.

A sheaf map $f\colon \calf \to \calg$ over a topological space
$X$ induces at each point $x \in X$ a group homomorphism
$f_x\colon \calf_x \to \calg_x$ of stalks.  The \term{support}\index{support!of a sheaf morphism}%
\index{sheaf morphism!support of} of
the sheaf morphism $f$ is defined to be
\[
\supp f = \{ x \in X \mid f_x \ne 0 \}.
\]

If two sheaf maps over a topological space $X$ agree
at a point, then they agree in a neighborhood
of that point, so the set where two sheaf maps agree is open
in $X$.
Since the complement $X - \supp f$ is the subset of $X$
where the sheaf map $f$ agrees with the zero sheaf map,
it is open and therefore $\supp f$ is closed.

\begin{defi}
Let $\calf$ be a sheaf of abelian groups on a topological space $X$
and $\{ U_{\ga} \}$ a locally finite open cover of $X$.
A \term{partition of unity}\index{partition of unity!of a sheaf} of $\calf$ subordinate to $\{ U_{\ga}\}$
is a collection  $\{ \eta_{\ga}\colon \calf \to \calf \}$ of sheaf maps
such that
\begin{enumerate}[(i)]
\item   $\supp \eta_{\ga} \subset U_{\ga}$;
\item\label{ch2:def2211.ii}   for each point $x \in X$, the sum $\sum \eta_{\ga,x} = \id_{\calf_x}$, the identity map on the stalk $\calf_x$.
\end{enumerate}
\end{defi}

Note that although $\ga$ may range over an infinite index set,
the sum in \ref{ch2:def2211.ii} is a finite sum, because $x$ has a neighborhood that
meets only finitely many of the $U_{\ga}$'s and $\supp \eta_{\ga} \subset
U_{\ga}$.

\begin{defi}
A sheaf $\calf$ on a topological space $X$ is said to be \term{fine}\index{fine sheaf}
if for every locally finite open cover $\{ U_{\ga}\}$ of $X$,
the sheaf $\calf$ admits a partition of unity subordinate to $\{
U_{\ga}\}$.
\end{defi}

\begin{prop} \label{2p:fine}
The sheaf $\cala^k$ of smooth $k$-forms on a manifold
$M$ is a fine sheaf on $M$.
\end{prop}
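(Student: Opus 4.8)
The plan is to manufacture the required sheaf-theoretic partition of unity directly from an ordinary smooth partition of unity on the manifold, so that essentially all the geometric content is imported as a single external fact. Since $M$ is a smooth manifold and $\{U_{\ga}\}$ is a locally finite open cover, standard manifold theory (see, e.g., \cite[Sec.~9.2]{bott--tu}) supplies a smooth partition of unity $\{\rho_{\ga}\}$ subordinate to $\{U_{\ga}\}$: each $\rho_{\ga}\colon M \to \R$ is a smooth function with $\supp \rho_{\ga} \subset U_{\ga}$, and $\sum_{\ga} \rho_{\ga} \equiv 1$ on $M$. I would take the existence of such $\{\rho_{\ga}\}$ as the one input and spend the rest of the argument promoting it to a partition of unity of the sheaf $\cala^k$.

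For each index $\ga$, I would define a sheaf morphism $\eta_{\ga}\colon \cala^k \to \cala^k$ by multiplication by $\rho_{\ga}$: on each open set $V \subset M$ and each smooth $k$-form $\omega \in \cala^k(V)$, set $\eta_{\ga}(\omega) := (\rho_{\ga}|_V)\,\omega$. Because this operation is defined pointwise, it visibly commutes with the restriction maps and is therefore a genuine morphism of sheaves; its induced stalk map $\eta_{\ga,x}\colon \cala^k_x \to \cala^k_x$ is multiplication by the germ $(\rho_{\ga})_x$.

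It then remains to verify the two axioms of a partition of unity, which I expect to be routine. For the support condition, I would observe that $\eta_{\ga,x}$ is the zero map precisely when the germ $(\rho_{\ga})_x$ vanishes, that is, when $\rho_{\ga}$ is identically zero on some neighborhood of $x$; hence $\supp \eta_{\ga}$ coincides with the ordinary support $\supp \rho_{\ga}$, which lies in $U_{\ga}$. For the summation condition, I would fix $x \in M$: local finiteness of $\{U_{\ga}\}$ together with $\supp \eta_{\ga} \subset U_{\ga}$ guarantees that only finitely many $\eta_{\ga,x}$ are nonzero, so the sum is finite, and $\sum_{\ga} \eta_{\ga,x}$ is multiplication by $\sum_{\ga}(\rho_{\ga})_x = \big(\sum_{\ga}\rho_{\ga}\big)_x = 1$, which is the identity $\id_{\cala^k_x}$.

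The conceptual content is carried entirely by the existence of the smooth partition $\{\rho_{\ga}\}$, so the argument is short once that is in hand. The only step requiring genuine care is the identification $\supp \eta_{\ga} = \supp \rho_{\ga}$, where one must keep the locus $\{\rho_{\ga} \ne 0\}$ distinct from its closure and use that a germ is zero exactly when the function vanishes on an entire neighborhood; everything else is the formal bookkeeping of turning scalar multiplication into a sheaf endomorphism.
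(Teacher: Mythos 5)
Your proposal is correct and follows essentially the same route as the paper's proof: both take a smooth partition of unity $\{\rho_{\ga}\}$ subordinate to the locally finite cover, define $\eta_{\ga}$ as multiplication by $\rho_{\ga}$ on sections over each open set, and verify the support and stalk-sum conditions via germs. The only cosmetic difference is that you identify $\supp \eta_{\ga}$ with $\supp \rho_{\ga}$ exactly, whereas the paper only proves the containment $\supp \eta_{\ga} \subset U_{\ga}$ directly, which is all that is needed.
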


\pf
Let $\{ U_{\ga}\}$ be a locally finite open cover of $M$.
Then there is a $\cinf$ partition of unity $\{ \rho_{\ga} \}$
on $M$ subordinate to $\{ U_{\ga} \}$ \cite[App.~C, p.~346]{2tu_m}.
(This partition of unity  $\{ \rho_{\ga} \}$ is a collection of smooth $\R$-valued functions,
not sheaf maps.)
For any open set $U \subset M$, define
$\eta_{\ga,U}\colon \cala^k(U) \to \cala^k(U)$ by
\[
\eta_{\ga,U}(\omega) = \rho_{\ga} \omega.
\]

If $x \notin U_{\ga}$, then $x$ has a neighborhood $U$
disjoint from $\supp \rho_{\ga}$.
Hence, $\rho_{\ga}$ vanishes identically on $U$
and $\eta_{\ga,U} = 0$,
so that the stalk map $\eta_{\ga,x}\colon \cala_x^k \to \cala_x^k$ is the zero map.
This proves that $\supp \eta_{\ga} \subset U_{\ga}$.

For any $x\in M$, the stalk map $\eta_{\ga,x}$ is multiplication
by the germ of $\rho_{\ga}$,
so $\sum_{\ga} \eta_{\ga,x}$ is the identity map on the stalk $\cala_x^k$.
Hence,  $\{ \eta_{\ga} \}$ is a partition of unity of the sheaf
$\cala^k$
subordinate to $\{ U_{\ga} \}$.
 \epf

Let $\calr$ be a sheaf of commutative rings on a topological space
$X$.
A sheaf $\calf$ of abelian groups on $X$ is called a
\term{sheaf of $\calr$-modules}\index{sheaf of modules!over a sheaf of commutative rings} (or simply an
\term{$\calr$-module}\index{r-module@$\calr$-module}) if for every open set $U \subset X$,
the abelian group $\calf(U)$ has an $\calr(U)$-module structure and
moreover, for all $V \subset U$, the restriction map $\calf(U) \to
\calf(V)$
is compatible with the module structure in the sense that the diagram
\[
\xymatrix{
\calr(U) \times \calf(U) \ar[d] \ar[r] & \calf(U) \ar[d] \\
\calr(V) \times \calf(V) \ar[r] & \calf(V)
}
\]
commutes.

A \term{morphism}\index{morphism!of sheaves of modules} $\varphi\colon \calf \to \calg$ of sheaves of $\calr$-modules
over $X$ is a sheaf morphism such that for each open set $U \subset X$,
the group homomorphism $\varphi_U\colon \calf(U) \to \calg(U)$ is
an $\calr(U)$-module homomorphism.

If $\cala^0$ is the sheaf of $\cinf$ functions on a manifold $M$,
then the sheaf $\cala^k$ of smooth $k$-forms on $M$ is a
sheaf of $\cala^0$-modules.
By a proof analogous to that of Proposition~\ref{2p:fine},
any sheaf of $\cala^0$-modules over a manifold is a fine sheaf.
In particular, the sheaves $\cala^{p,q}$ of smooth $(p,q)$-forms on
a complex manifold are all fine sheaves.

\subsection{Cohomology with Coefficients in a Fine Sheaf}

A topological space $X$ is \term{paracompact}\index{paracompact topological space} if every open cover of
$X$ admits a locally finite open refinement.
In working with fine sheaves, one usually has to assume that the
topological space is paracompact, in order to be assured
of the existence of a locally finite open cover.\index{locally finite open cover}
A common and important class of paracompact spaces
is the class of topological manifolds \cite[Lem.~1.9, p.~9]{warner}.\index{topological manifolds!are paracompact}

A fine sheaf is generally not flasque.
For example, $f(x) = \sec x$ is a $\cinf$ function on
the open interval $U= \ ]-\pi/2,
\pi/2[$
that cannot be extended to a $\cinf$ function on $\R$.
This shows that $\cala^0(\R) \to \cala^0(U)$ is not surjective.
Thus, the sheaf $\cala^0$ of $\cinf$ functions is a fine sheaf that is
not flasque.

While flasque sheaves are useful for defining cohomology,
fine sheaves are more prevalent in differential topology.
Although fine sheaves need not be flasque,
they share many of the properties of flasque sheaves.
For example, on a manifold,
Proposition~\ref{p:flasque} and Corollary~\ref{c:flasque_acyclic}
remain true if the sheaf $\cale$ is fine instead of flasque.

\begin{prop}
\begin{enumerate}[(i)]
\item\label{ch2:prp2214.i}
In a short exact sequence of sheaves
\begin{equation} \label{e:fine}
0 \to \cale \to \calf \to \calg \to 0
\end{equation}
of abelian groups over a paracompact space $X$, if $\cale$ is fine, then
the sequence of abelian groups of global sections
\[
0 \to \cale(X) \overset{i}{\to} \calf(X) \overset{j}{\to} \calg(X) \to 0
\]
is exact.
\end{enumerate}
\noindent
In \ref{ch2:prp2214.ii} and \ref{ch2:prp2214.iii}, assume that
every open subset of $X$ is paracompact (a manifold is an example of
such a space $X$).
\begin{enumerate}[(i)]\setcounter{enumi}{1}
\item\label{ch2:prp2214.ii}  If $\cale$ is fine and $\calf$ is flasque in
\eqref{e:fine}, then $\calg$ is flasque.
\item\label{ch2:prp2214.iii}  If
\[
0 \to \cale \to \call^0 \to \call^1 \to \call^2 \to \cdots
\]
is an exact sequence of sheaves on $X$ in which $\cale$
is fine and all the $\call^k$ are flasque, then for
any open set $U \subset X$, the sequence of abelian groups
\[
0 \to \cale(U) \to \call^0(U) \to \call^1(U) \to \call^2(U) \to \cdots
\]
is exact.
\end{enumerate}
\end{prop}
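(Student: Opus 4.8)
The plan is to make part \ref{ch2:prp2214.i} the crux and then obtain \ref{ch2:prp2214.ii} and \ref{ch2:prp2214.iii} by imitating the flasque arguments of Proposition~\ref{p:flasque}, with part \ref{ch2:prp2214.i} playing the role that Proposition~\ref{p:flasque}\ref{ch2:prp224.i} played there. For \ref{ch2:prp2214.i}, exactness of $0 \to \cale(X) \overset{i}{\to} \calf(X) \overset{j}{\to} \calg(X)$ is automatic, since it holds for any short exact sequence of sheaves (Section~\ref{s:exact}); the only issue is surjectivity of $j$ on global sections. Fix $g \in \calg(X)$. Because $\calf \to \calg$ is surjective on stalks, every point has a neighborhood over which $g$ lifts into $\calf$; using paracompactness of $X$, refine the resulting cover to a locally finite open cover $\{U_{\ga}\}$ and choose $f_{\ga} \in \calf(U_{\ga})$ with $j(f_{\ga}) = g|_{U_{\ga}}$. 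On each overlap $j(f_{\ga} - f_{\gb}) = 0$, so there is a unique $e_{\ga\gb} \in \cale(U_{\ga} \cap U_{\gb})$ with $i(e_{\ga\gb}) = f_{\ga} - f_{\gb}$, and since $i$ is injective these satisfy the cocycle identity $e_{\ga\gb} + e_{\gb\gc} + e_{\gc\ga} = 0$ on triple overlaps.

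The fineness of $\cale$ is used precisely to split this cocycle. Let $\{\eta_{\gc}\colon \cale \to \cale\}$ be a partition of unity of $\cale$ subordinate to $\{U_{\gc}\}$. For each pair $\ga,\gc$ the section $\eta_{\gc}(e_{\gc\ga}) \in \cale(U_{\gc}\cap U_{\ga})$ has germ $0$ at every point of $(U_{\gc}\cap U_{\ga})\setminus\supp\eta_{\gc}$, and since $\supp\eta_{\gc}\subset U_{\gc}$ is closed in $X$, it glues with the zero section on $U_{\ga}\setminus\supp\eta_{\gc}$ to extend by zero to all of $U_{\ga}$; local finiteness of $\{U_{\gc}\}$ then makes $e_{\ga} := \sum_{\gc}\eta_{\gc}(e_{\gc\ga}) \in \cale(U_{\ga})$ a well-defined locally finite sum. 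Using $e_{\gc\ga} - e_{\gc\gb} = -e_{\ga\gb}$ (a consequence of the cocycle identity) together with $\sum_{\gc}\eta_{\gc} = \id$, a short computation gives $e_{\ga} - e_{\gb} = -e_{\ga\gb}$ on $U_{\ga}\cap U_{\gb}$. Hence the corrected lifts $\tilde{f}_{\ga} := f_{\ga} + i(e_{\ga})$ agree on overlaps and glue, by the sheaf axiom for $\calf$, to a global $f \in \calf(X)$; since $j\comp i = 0$ we get $j(f)|_{U_{\ga}} = j(f_{\ga}) = g|_{U_{\ga}}$ for every $\ga$, so $j(f) = g$.

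For \ref{ch2:prp2214.ii} and \ref{ch2:prp2214.iii} the proofs of Proposition~\ref{p:flasque}\ref{ch2:prp224.ii} and \ref{ch2:prp224.iii} transcribe almost verbatim, with part \ref{ch2:prp2214.i} substituted for Proposition~\ref{p:flasque}\ref{ch2:prp224.i} wherever the left-hand sheaf is fine. The new point is that these invocations of \ref{ch2:prp2214.i} take place over open subsets $U \subset X$, so one needs \ref{ch2:prp2214.i} to hold on $U$: this is exactly where the standing hypothesis that every open subset of $X$ is paracompact enters, together with the fact that fineness passes to open subsets, so that $\cale|_U$ is again fine and \ref{ch2:prp2214.i} applies. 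Concretely, for \ref{ch2:prp2214.ii} one forms the commuting square of restriction maps relating the global-section sequences over $X$ and over $U$; both rows are now exact by \ref{ch2:prp2214.i}, the map $\calf(X) \to \calf(U)$ is onto because $\calf$ is flasque, and a diagram chase forces $\calg(X) \to \calg(U)$ to be onto. For \ref{ch2:prp2214.iii} one breaks the resolution into short exact sequences $0 \to \calq^{k-1} \to \call^k \to \calq^k \to 0$ with $\calq^{-1} := \cale$: the first has fine left term, so \ref{ch2:prp2214.ii} shows $\calq^0$ is flasque, and Proposition~\ref{p:flasque}\ref{ch2:prp224.ii} then shows inductively that every $\calq^k$ is flasque; applying \ref{ch2:prp2214.i} to the first sequence and Proposition~\ref{p:flasque}\ref{ch2:prp224.i} to the remaining ones yields short exact section sequences that splice into the desired long exact sequence.

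I expect the cocycle-splitting in \ref{ch2:prp2214.i} to be the main obstacle: one must carefully justify that each $\eta_{\gc}(e_{\gc\ga})$ genuinely extends by zero to $U_{\ga}$ and that the relation $\sum_{\gc}\eta_{\gc} = \id$ collapses the sum to $-e_{\ga\gb}$ on overlaps, so that fineness really does kill the obstruction to gluing the local lifts. The only other point outside the flasque template is the verification that fineness restricts to open subsets, which is where the hypothesis on paracompactness of all open subsets is genuinely needed; everything else in \ref{ch2:prp2214.ii} and \ref{ch2:prp2214.iii} is a direct transcription of the flasque case.
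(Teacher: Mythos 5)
Your proposal is correct and takes essentially the same route as the paper: the identical local-lift, cocycle-splitting argument for part (i) via a partition of unity of $\cale$ and extension by zero of $\eta_{\gc}(e_{\gc\ga})$ (your conventions $e_{\gc\ga} = -e_{\ga\gc}$ and correction $f_{\ga} + i(e_{\ga})$ match the paper's $f_{\ga} - i(e_{\ga})$ up to sign), followed by the same reduction of parts (ii) and (iii) to the flasque arguments with part (i) substituted over open subsets. The one point you flag beyond the paper --- that $\cale|_U$ must again be fine for (i) to apply over $U$ --- is passed over silently in the paper's proof as well, so your account is, if anything, slightly more careful there.
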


\pf
To simplify the notation, $i_U\colon \cale(U) \to \calf(U)$ will generally
be denoted by $i$.
Similarly, ``$f_{\ga}$ on $U_{\ga \gb}$'' will mean $f_{\ga}|_{U_{\ga \gb}}$.
As in Proposition~\ref{p:flasque}\ref{ch2:prp224.i}, it suffices to show that if
$\cale$
is a fine sheaf, then $j\colon \calf(X) \to \calg(X)$ is surjective.
Let $g \in \calg(X)$.
Since $\calf_p \to \calg_p$ is surjective for all $p \in X$,
there exist an open cover $\{ U_{\ga} \}$ of $X$ and
elements $f_{\ga} \in \calf(U_{\ga})$ such that
$j(f_{\ga}) = g|_{U_{\ga}}$.
By the paracompactness of $X$, we may assume that the
open cover $\{ U_{\ga} \}$ is locally finite.
On $U_{\ga \gb} := U_{\ga} \cap U_{\gb}$,
\[
j(f_{\ga}|_{U_{\ga \gb}} - f_{\gb}|_{U_{\ga \gb}}) =
j(f_{\ga})|_{U_{\ga \gb}} - j(f_{\gb})|_{U_{\ga \gb}} =
g|_{U_{\ga\gb}} - g|_{U_{\ga\gb}} = 0.
\]

By the exactness of the sequence
\[
0 \to \cale(U_{\ga \gb}) \overset{i}{\to} \calf(U_{\ga \gb})
\overset{j}{\to} \calg(U_{\ga \gb}),
\]
there is an element $e_{\ga \gb} \in \cale(U_{\ga \gb})$ such that
on $U_{\ga \gb}$,
\[
f_{\ga} - f_{\gb} = i(e_{\ga \gb}).
\]
Note that on the triple intersection
$U_{\ga\gb\gc} := U_{\ga} \cap U_{\gb} \cap U_{\gc}$, we have
\[
i(e_{\ga \gb} + e_{\gb \gc}) = f_{\ga} -f_{\gb} + f_{\gb} - f_{\gc} = i(e_{\ga \gc}).
\]

Since $\cale$ is a fine sheaf, it admits a partition of unity
$\{ \eta_{\ga} \}$ subordinate to $\{ U_{\ga} \}$.
We will now view an element of $\cale(U)$ for any open set $U$ as a continuous section
of the \'etal\'e space $E_{\cale}$ over $U$.
Then the section $\eta_{\gamma}(e_{\alpha \gamma}) \in \cale(U_{\ga
  \gc})$
can be extended by zero to a continuous section of $E_{\cale}$ over
$U_{\ga}$:
\[
\overline{\eta_{\gc}e_{\ga \gc}}(p) = \begin{cases}
(\eta_{\gc}e_{\ga \gc})(p) &\text{for } p \in U_{\ga \gc},\\
0 &\text{for } p \in U_{\ga} - U_{\ga \gc}.
\end{cases}
\]
(Proof of the continuity of $\overline{\eta_{\gc}e_{\ga \gc}}$:
On $U_{\ga\gc}$, $\eta_{\gc}e_{\ga \gc}$ is continuous.
If $p \in U_{\ga} - U_{\ga \gc}$, then $p \notin U_{\gc}$, so
$p \notin \supp \eta_{\gc}$.
Since $\supp \eta_{\gc}$ is closed, there is an open set
$V$ containing $p$ such that $V \cap \supp \eta_{\gc} = \varnothing$.
Thus, $\overline{\eta_{\gc}e_{\ga \gc}}= 0$ on $V$, which proves
that $\overline{\eta_{\gc}e_{\ga \gc}}$ is continuous at $p$.)

To simplify the notation, we will omit the overbar and write
$\eta_{\gc}e_{\ga \gc} \in \cale(U_{\ga})$ also for the extension by
zero
of $\eta_{\gc}e_{\ga \gc} \in \cale(U_{\ga\gc})$.
Let $e_{\ga}$ be the locally finite sum
\[
e_{\ga} = \sum_{\gamma} \eta_{\gc} e_{\ga\gc} \in
\cale(U_{\ga}).
\]
On the intersection $U_{\ga\gb}$,
\begin{align*}
i(e_{\ga} - e_{\gb}) &= i\Big(\sum_{\gc} \eta_{\gc}e_{\ga\gc}
- \sum_{\gc} \eta_{\gc}e_{\gb\gc}\Big)
= i \Big(\sum_{\gc} \eta_{\gc} (e_{\ga\gc} - e_{\gb\gc})\Big) \\
&= i\Big(\sum_{\gc} \eta_{\gc}e_{\ga\gb}\Big) = i(e_{\ga\gb}) = f_{\ga}- f_{\gb}.
\end{align*}
Hence, on $U_{\ga\gb}$,
\[
f_{\ga} - i(e_{\ga}) = f_{\gb} - i(e_{\gb}).
\]
By the gluing sheaf axiom for the sheaf $\calf$, there is an element $f\in
\calf(X)$
such that $f|_{U_{\ga}} = f_{\ga} - i(e_{\ga})$.
Then
\[
j(f)|_{U_{\ga}} = j(f_{\ga}) = g|_{U_{\ga}} \ \ \text{for all } \ga.
\]
By the uniqueness sheaf axiom for the sheaf $\calg$, we have $j(f)=g \in
\calg(X)$.
This proves the surjectivity of $j\colon \calf(X) \to \calg(X)$.

\noindent
\ref{ch2:prp2214.ii}, \ref{ch2:prp2214.iii} Assuming that every open subset $U$ of $X$ is paracompact,
we can apply \ref{ch2:prp2214.i} to $U$.  Then the proofs of \ref{ch2:prp2214.ii} and \ref{ch2:prp2214.iii} are the
same
as in Proposition~\ref{p:flasque}\ref{ch2:prp224.ii}, \ref{ch2:prp224.iii}.
 \epf

The analogue of Corollary~\ref{c:flasque_acyclic} for $\cale$ a fine
sheaf then follows as before.
The upshot is the following theorem.

\begin{theorem}
Let $X$ be a topological space in which every open subset is
paracompact.
Then a fine sheaf on $X$ is acyclic
on every open subset $U$.\index{fine sheaf!is acyclic on every open subset}
\end{theorem}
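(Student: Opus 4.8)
The plan is to imitate the proof of Corollary~\ref{c:flasque_acyclic} almost verbatim, replacing the flasque hypothesis on the sheaf by fineness and substituting the fine-sheaf analogue of Proposition~\ref{p:flasque}\ref{ch2:prp224.iii}, namely part~\ref{ch2:prp2214.iii} of the preceding proposition, for the flasque statement. Concretely, let $U \subset X$ be an arbitrary open set and let $\cale$ be a fine sheaf on $X$. I would begin by writing down the Godement canonical resolution
\[
0 \to \cale \to \calc^0\cale \to \calc^1\cale \to \calc^2\cale \to \cdots ,
\]
which is an exact sequence of sheaves.

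The key observation is that this resolution has exactly the shape required by part~\ref{ch2:prp2214.iii} of the preceding proposition: its initial term $\cale$ is fine by hypothesis, and every Godement sheaf $\calc^k\cale$ is flasque (recall $\calc^k\cale = \calc^0\calq^k$, and $\calc^0$ of any sheaf is flasque). Since every open subset of $X$ is paracompact, the hypotheses of that proposition are met, so applying the sections functor $\Gamma(U,\ )$ yields an exact sequence of abelian groups
\[
0 \to \cale(U) \to \calc^0\cale(U) \to \calc^1\cale(U) \to \cdots .
\]
Reading off cohomology from the definition $H^k(U,\cale) = h^k\big(\calc\bu\cale(U)\big)$ then gives $H^0(U,\cale) = \cale(U)$ and $H^k(U,\cale) = 0$ for all $k > 0$, which is precisely the asserted acyclicity of $\cale$ on $U$.

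The step I expect to carry all the weight is not in this theorem at all but in the preceding proposition, whose part~\ref{ch2:prp2214.i} needs the partition-of-unity argument to show that global sections remain surjective when $\cale$ is merely fine rather than flasque; the passage from part~\ref{ch2:prp2214.i} to part~\ref{ch2:prp2214.iii} (via the quotient sheaves $\calq^k$ and an induction) is where the paracompactness of \emph{every} open subset is genuinely used, since it permits applying part~\ref{ch2:prp2214.i} on each open set $U$ rather than only on $X$. Granting that proposition, the present theorem is immediate and poses no further obstacle; the only point requiring care is to invoke the fine version of Proposition~\ref{p:flasque}\ref{ch2:prp224.iii} in place of the flasque version, because a fine sheaf need not be flasque, as the example of $\sec x$ on $]-\pi/2,\pi/2[$ already shows.
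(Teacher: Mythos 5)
Your proposal is correct and is essentially the paper's own argument: the paper gives no separate proof of this theorem, remarking only that the analogue of Corollary~\ref{c:flasque_acyclic} for a fine sheaf ``follows as before,'' which is precisely your substitution of part~\ref{ch2:prp2214.iii} of the fine-sheaf proposition (applicable because $\cale$ is fine, the Godement sheaves $\calc^k\cale$ are flasque, and every open subset of $X$ is paracompact) into the Godement-resolution argument. Nothing further is needed.
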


\begin{rem} \label{r:cohomology}
Sheaf cohomology can be characterized uniquely by a set
of axioms \cite[Def.~5.18, pp.~176--177]{warner}.
Both the sheaf cohomology in terms of Godement's resolution
and the \v{C}ech cohomology of a paracompact Hausdorff space
satisfy these axioms \cite[pp.~200--204]{warner},
so at least on a paracompact Hausdorff space,
sheaf cohomology is isomorphic to \v{C}ech cohomology.\index{sheaf cohomology!isomorphism with \v{C}ech cohomology}\index{Cech cohomology@\v{C}ech cohomology!isomorphism with sheaf cohomology}
Since the \v{C}ech cohomology of a triangularizable space
with coefficients in the constant sheaf $\underline{\Z}$ is
isomorphic to its singular cohomology with integer coefficients
\cite[Th.~15.8, p.~191]{bott--tu},
the sheaf cohomology $H^k(M, \underline{\Z})$ of a manifold $M$ is isomorphic
to the singular cohomology $H^k(M, \Z)$.
In fact, the same argument shows that one may replace $\Z$ by $\R$
or by $\C$.
\end{rem}

\section{Coherent Sheaves and Serre's GAGA Principle}

Given two sheaves $\calf$ and $\calg$ on $X$, it is easy to show
that the presheaf $U \mapsto \calf(U) \oplus \calg(U)$ is a sheaf,
called the \term{direct sum}%
\index{direct sum!of sheaves} of $\calf$ and $\calg$ and denoted
by $\calf \oplus \calg$.
We write the direct sum of $p$ copies of $\calf$ as $\calf^{\oplus p}$.
If $U$ is an open subset of $X$, the \term{restriction}\index{restriction!of a sheaf to an open subset} $\calf|_U$
of the sheaf $\calf$ to
$U$ is the sheaf on $U$ defined by $(\calf|_U)(V) = \calf(V)$ for every open subset
$V$ of $U$.
Let $\calr$ be a sheaf of commutative rings on a topological space
$X$.
A sheaf $\calf$ of $\calr$-modules on $M$ is \term{locally free
of rank $p$}\index{locally free sheaf}\index{rank!of a locally free sheaf}
if every point $x \in M$ has a neighborhood $U$ on which
there is a sheaf isomorphism $\calf|_U
\simeq \calr|_U^{\oplus p}$.

Given a complex manifold $M$, let $\calo_M$ be its sheaf of holomorphic
functions.
When understood from the context, the subscript
$M$ is usually suppressed and $\calo_M$ is
simply written $\calo$.
A sheaf of $\calo$-modules on a complex manifold is also called
an \term{analytic sheaf}.\index{analytic sheaf}

\begin{exa}
On a complex manifold $M$ of complex dimension $n$,
the sheaf $\Omega^k$ of holomorphic $k$-forms is an
analytic sheaf.
It is locally free of rank $\binom{n}{k}$, with local frame
$\{ dz_{i_1} \wedge\, \cdots \,\wedge dz_{i_k}\}$ for $1 \le i_1 < \cdots < i_k
\le n$.
\end{exa}

\begin{exa}
The sheaf $\calo^*$ of nowhere-vanishing holomorphic functions with pointwise multiplication on a
complex manifold $M$ is \emph{not} an analytic sheaf, since
multiplying a nowhere-vanishing function $f\in \calo^*(U)$ by the zero
function $0 \in \calo(U)$ will result in a function not in $\calo^*(U)$.
\end{exa}

Let $\calr$ be a sheaf of commutative rings on a topological
space $X$, let $\calf$ be a sheaf of $\calr$-modules on $X$,
and let
$f_1, \ldots, f_n$ be sections of $\calf$ over an open set $U$ in $X$.
For any $r_1, \ldots, r_n\in \calr(U)$, the map
\begin{align*}
\calr^{\oplus n} (U) &\to \calf(U),\\
(r_1, \ldots, r_n) &\mapsto \sum r_i f_i
\end{align*}
defines a sheaf map $\varphi\colon \calr^{\oplus n}|_U \to \calf|_U$ over
$U$.
The kernel of $\varphi$ is a subsheaf of $(\calr|_U)^{\oplus n}$
called the \term{sheaf of relations}\index{sheaf of relations} among $f_1, \ldots, f_n$,
denoted by $\cals(f_1, \ldots, f_n)$.
We say that $\calf|_U$ is \term{generated by} $f_1, \ldots, f_n$
if $\varphi\colon \calr^{\oplus n} \to \calf$ is surjective over $U$.\index{generators of a sheaf}

A sheaf $\calf$ of $\calr$-modules on $X$ is said to be
\term{of finite type}\index{sheaf of modules!of finite type} if every $x\in X$ has a neighborhood $U$ on
which $\calf$ is generated by finitely many sections $f_1, \ldots, f_n
\in \calf(U)$.
In particular, then, for every $y\in U$, the values $f_{1,y}, \dotsc,
f_{n,y} \in \calf_y$ generate the stalk $\calf_y$ as an
$\calr_y$-module.

\begin{defi}
A sheaf $\calf$ of $\calr$-modules on a topological space $X$ is
\term{coherent} if\index{coherent sheaf}
\begin{enumerate}[(i)]
\item  $\calf$ is of finite type; and
\item  for any open set $U \subset X$ and any collection of
  sections
$f_1, \ldots, f_n \in \calf(U)$, the sheaf $\cals(f_1, \ldots, f_n)$
of relations among $f_1, \ldots, f_n$ is of finite type over $U$.
\end{enumerate}
\end{defi}

\begin{theorem} \label{t:coherent}
\begin{enumerate}[(i)]
\item  The direct sum of finitely many coherent sheaves is
  coherent.
\item  The kernel, image, and cokernel of a morphism
of coherent sheaves are coherent.
\end{enumerate}
\end{theorem}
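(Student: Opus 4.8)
The plan is to reduce both parts to a single technical lemma, after isolating one cheap observation. The cheap observation, call it $(\star)$, is that \emph{a subsheaf $\calg'$ of finite type of a coherent sheaf $\calg$ is automatically coherent}: condition (i) of coherence is the hypothesis, and condition (ii) comes for free, since for sections $f_1,\dots,f_n \in \calg'(U) \subseteq \calg(U)$ the inclusion $\calg' \hookrightarrow \calg$ is a monomorphism, so the sheaf of relations $\cals(f_1,\dots,f_n)$ computed in $\calg'$ coincides with the one computed in $\calg$, which is of finite type because $\calg$ is coherent. The technical heart, call it the \emph{Fundamental Lemma}, is this: \emph{if $\varphi\colon \calf \to \calg$ is a morphism of $\calr$-modules with $\calf$ of finite type and $\calg$ coherent, then $\ker\varphi$ is of finite type.}

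To prove the Fundamental Lemma I would argue locally near a point $x$. Since $\calf$ is of finite type, after shrinking to a neighborhood $U$ there is a surjection $\psi\colon \calr^{\oplus n}|_U \twoheadrightarrow \calf|_U$ sending the standard sections $e_i$ to generators of $\calf$. Setting $g_i := \varphi(\psi(e_i)) \in \calg(U)$, the kernel of the composite $\varphi \comp \psi$ is exactly the sheaf of relations $\cals(g_1,\dots,g_n)$, which is of finite type \emph{precisely because} $\calg$ is coherent. Because $\psi$ is surjective on stalks one checks that $\ker\varphi = \psi\big(\ker(\varphi \comp \psi)\big)$, so $\ker\varphi$ is the image of a finite-type sheaf and is therefore itself of finite type. \textbf{This step is the main obstacle}: it is the only place where condition (ii) of coherence is genuinely used, through the passage to a local free presentation, and everything else is bookkeeping.

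With these two facts Theorem~\ref{t:coherent} falls out quickly. For part (ii): the image of $\varphi\colon \calf\to\calg$ is of finite type (images of local generators of $\calf$ generate it) and lies inside the coherent sheaf $\calg$, hence is coherent by $(\star)$; the kernel of $\varphi$ is of finite type by the Fundamental Lemma and lies inside the coherent sheaf $\calf$, hence is coherent by $(\star)$. For the cokernel $\calg/\im\varphi$, given sections $\bar s_1,\dots,\bar s_p$ over $U$ I would lift them to $s_i \in \calg(U)$ and adjoin finitely many generators $s'_1,\dots,s'_q$ of the finite-type sheaf $\im\varphi$; then the relation sheaf $\cals(\bar s_1,\dots,\bar s_p)$ is the image, under the coordinate projection $\calr^{\oplus(p+q)} \to \calr^{\oplus p}$, of $\cals(s_1,\dots,s_p,s'_1,\dots,s'_q)$, which is of finite type by coherence of $\calg$; since $\calg/\im\varphi$ is also a quotient of the finite-type sheaf $\calg$ and hence of finite type, it is coherent. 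Finally, for part (i) it suffices to treat $\calf\oplus\calg$ and induct. It is plainly of finite type, and for an arbitrary family of sections $h_k = (a_k,b_k)$ the relation sheaf is $\ker\big(\beta|_{\ker\alpha}\big)$, where $\alpha\colon \calr^{\oplus p}|_U \to \calf|_U$ and $\beta\colon \calr^{\oplus p}|_U \to \calg|_U$ are the maps attached to the $a_k$ and the $b_k$. Applying the Fundamental Lemma once shows $\ker\alpha$ is of finite type; applying it again to $\beta$ restricted to $\ker\alpha$ shows this relation sheaf is of finite type, so $\calf\oplus\calg$ is coherent.
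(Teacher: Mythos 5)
Your proposal is correct, but the comparison here is a degenerate one: the paper does not prove Theorem~\ref{t:coherent} at all --- its ``proof'' is the citation to Serre's FAC (Subsec.~13, Theorems~1 and~2). What you have written is essentially a self-contained reconstruction of the argument in that cited source: your observation $(\star)$ (a finite-type subsheaf of a coherent sheaf is coherent) and your Fundamental Lemma (the kernel of a morphism from a finite-type sheaf to a coherent sheaf is of finite type) are exactly the two workhorses of the standard FAC-style proof, and your deductions from them are sound. In particular, the stalkwise identity $\ker\varphi = \psi\bigl(\ker(\varphi\comp\psi)\bigr)$, the fact that the image of a finite-type sheaf under any morphism is of finite type, the identification of $\cals(\bar s_1,\ldots,\bar s_p)$ for the cokernel with a coordinate projection of $\cals(s_1,\ldots,s_p,s'_1,\ldots,s'_q)$, and the formula $\cals(h_1,\ldots,h_p)=\ker\bigl(\beta|_{\ker\alpha}\bigr)$ for the direct sum all check out on stalks. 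So you have supplied a genuine proof where the paper outsources one, and it is the ``right'' proof in the sense that it is the one the cited reference gives.

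One small imprecision to repair, in the cokernel step: sections $\bar s_1,\ldots,\bar s_p$ of the quotient sheaf $\calg/\im\varphi$ over $U$ need not lift to sections of $\calg$ over all of $U$ (the quotient sheaf is a sheafification, so lifts exist only locally), and likewise the finitely many generators $s'_1,\ldots,s'_q$ of the finite-type sheaf $\im\varphi$ exist only on a neighborhood of each point. This costs nothing: the conclusion you need --- that $\cals(\bar s_1,\ldots,\bar s_p)$ is of finite type over $U$ --- is by definition a local statement, so you may shrink $U$ around an arbitrary point before lifting the $\bar s_i$ and choosing the $s'_j$. With that sentence inserted, the argument is complete.
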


\pf
For a proof, see Serre \cite[Subsec.~13, Ths.~1 and 2, pp.~208--209]{serre55}.
 \epf

A sheaf $\calf$ of $\calr$-modules on a topological space $X$
is said to be \term{locally finitely presented}%
\index{locally finitely presented sheaf}\index{sheaf of modules!locally finitely presented} if every $x \in X$
has a neighborhood $U$ on which there is an
exact sequence of the form
\[
\calr|_U^{\ \ \oplus q} \to \calr|_U^{\ \ \oplus p} \to \calf|_U \to 0;
\]
in this case, we say that $\calf$ has a \term{finite presentation}\index{finite presentation!of a sheaf of modules}
or that $\calf$ is \term{finitely presented}\index{sheaf of modules!finitely presented} on $U$.
If $\calf$ is a coherent sheaf of $\calr$-modules on $X$, then
it is locally finitely presented.

\begin{remark}
Having a finite presentation locally is a consequence of coherence,
but is not equivalent to it.
Having a finite presentation means that for \emph{one set} of
generators of $\calf$, the sheaf of relations among them is finitely
generated.
Coherence is a stronger condition in that it requires the sheaf
of relations among \emph{any set} of elements of $\calf$ to be
finitely generated.
\end{remark}

A sheaf $\calr$ of rings on $X$ is clearly a sheaf
of $\calr$-modules of finite type.
For it to be coherent, for any open set $U \subset X$
and any sections $f_1, \ldots, f_n$, the sheaf
$\cals(f_1, \ldots, f_n)$ of relations among $f_1, \ldots, f_n$
must be of finite type.

\begin{exa} \label{exam:oka}
If $\calo_M$ is the sheaf of holomorphic functions on a complex manifold $M$,
then $\calo_M$ is a coherent sheaf of $\calo_M$-modules  (Oka's theorem
\cite[Sec.~5]{cartan}).\index{Oka's theorem}
\end{exa}

\begin{exa} \label{exam:serre}
If $\calo_X$ is the sheaf of regular functions on an algebraic variety $X$,
then $\calo_X$ is a coherent sheaf of $\calo_X$-modules
(Serre \cite[Sec.~37, Prop.~1]{serre55}).\index{Serre's theorem!on the coherence of the sheaf of regular functions}
\end{exa}

A sheaf of $\calo_X$-modules on an algebraic variety is called an
\term{algebraic sheaf}.\index{algebraic sheaf}

\begin{exa}
On a smooth variety $X$ of dimension $n$, the sheaf $\Omega^k$
of algebraic $k$-forms is an algebraic sheaf.  It is
locally free of rank $\binom{n}{k}$ \cite[Ch.~III, Th.~2, p.~200]{shafarevich}.

\end{exa}

\begin{theorem}
Let $\calr$ be a coherent sheaf of rings on a topological space $X$.
Then a sheaf $\calf$ of $\calr$-modules on $X$ is coherent
if and only if it is locally finitely presented.
\end{theorem}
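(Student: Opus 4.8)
The plan is to prove both implications, but since the forward direction has effectively just been recorded—the paragraph preceding the statement observes that a coherent sheaf of $\calr$-modules is locally finitely presented—I would only need to supply the converse. So I would assume that $\calf$ is locally finitely presented and that $\calr$ is coherent, and aim to show that $\calf$ is coherent.

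The key structural observation I would isolate first is that \emph{coherence is a local property}: a sheaf $\calf$ of $\calr$-modules is coherent if and only if every point of $X$ has an open neighborhood $U$ on which $\calf|_U$ is a coherent sheaf of $\calr|_U$-modules. Both halves reduce to the fact that ``finite type'' is a local condition, together with the fact that for sections $f_1,\dots,f_n$ over an open set $V$ and any open $W\subseteq V$ one has $\cals(f_1,\dots,f_n)|_W=\cals(f_1|_W,\dots,f_n|_W)$, because the sheaf of relations is formed stalkwise. Thus condition (ii) in the definition of coherence can be verified after restriction to the members of an open cover, and conversely it passes to open subsets; in particular the restriction of a coherent sheaf to an open set is again coherent.

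With locality in hand, the converse is short. I would fix a point together with a neighborhood $U$ carrying an exact sequence $\calr|_U^{\oplus q}\xrightarrow{\psi}\calr|_U^{\oplus p}\to\calf|_U\to 0$, so that $\calf|_U\cong\coker\psi$. Since $\calr$ is coherent, Theorem~\ref{t:coherent}(i) shows that the finite direct sums $\calr^{\oplus q}$ and $\calr^{\oplus p}$ are coherent on $X$, and by the locality just established their restrictions $\calr|_U^{\oplus q}$ and $\calr|_U^{\oplus p}$ are coherent on $U$. Hence $\psi$ is a morphism of coherent sheaves, and Theorem~\ref{t:coherent}(ii) gives that $\coker\psi=\calf|_U$ is coherent. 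As every point of $X$ admits such a neighborhood, the locality of coherence then yields that $\calf$ is coherent on all of $X$.

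The step I expect to be the main obstacle is the locality statement itself, because the definition of coherence quantifies condition (ii) over \emph{all} open sets and \emph{all} tuples of sections, which at first glance is not a local demand. The resolution is that what condition (ii) actually requires—that each relation sheaf be \emph{of finite type}—is a local property, and that the relation sheaf of restricted sections is the restriction of the relation sheaf; once this is made precise, the quantification over open sets collapses to a check on a cover. Everything after that is a formal consequence of Theorem~\ref{t:coherent}.
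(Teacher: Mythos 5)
Your proposal is correct and follows essentially the same route as the paper: take a local finite presentation $\calr|_U^{\oplus q}\to\calr|_U^{\oplus p}\to\calf|_U\to 0$ and apply Theorem~\ref{t:coherent} (coherence of finite direct sums and of cokernels) to conclude that $\calf|_U$ is coherent. The only difference is that you spell out the locality of coherence and the compatibility of relation sheaves with restriction, points the paper's terse proof uses implicitly when it passes from coherence on each such $U$ to coherence of $\calf$ on $X$; this is a refinement of the same argument, not a different one.
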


\pf
$(\Rightarrow)$ True for any coherent sheaf of $\calr$-modules,
whether $\calr$ is coherent or not.

\noindent
$(\Leftarrow)$  Suppose there is an exact sequence
\[
\calr^{\oplus q} \to \calr^{\oplus p} \to \calf \to 0
\]
on an open set $U$ in $X$.
Since $\calr$ is coherent, by Theorem~\ref{t:coherent}
so are $\calr^{\oplus p}$, $\calr^{\oplus q}$, and the cokernel $\calf$ of
$\calr^{\oplus q} \to \calr^{\oplus p}$.
 \epf

Since the structure sheaves $\calo_X$ or $\calo_M$
of an algebraic variety $X$ or of a complex manifold $M$
are coherent, an algebraic or analytic sheaf is coherent
if and only if it is locally finitely presented.

\begin{exa}
A locally free analytic sheaf $\calf$ over a complex manifold $M$
is automatically coherent since every point $p$ has a
neighborhood $U$ on which there is an exact sequence of the form
\[
0 \to \calo_U^{\oplus p} \to \calf|_U \to 0,
\]
so that $\calf|_U$ is finitely presented.
\end{exa}

For our purposes, we define a \term{Stein manifold}\index{Stein manifold} to be a complex manifold that is
biholomorphic to a closed submanifold of $\C^N$
(this is not the usual definition, but is equivalent to it
\cite[p.~114]{hormander}).
In particular, a complex submanifold of $\C^N$ defined by
finitely many holomorphic functions is a Stein manifold.
One of the basic theorems about coherent analytic sheaves is Cartan's
theorem~B.

\begin{theorem}[Cartan's theorem B]
A coherent analytic sheaf $\calf$ is acyclic on a Stein manifold $M$,
i.e., $H^q(M, \calf) =0$ for all $q \ge 1$.\index{Cartan's theorem~B}
\end{theorem}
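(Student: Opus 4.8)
The plan is to exploit the paper's definition of a Stein manifold as a closed submanifold of $\C^N$ and reduce everything to coherent sheaves on affine space. First I would fix a closed embedding $i\colon M \into \C^N$ and push $\calf$ forward. Since $M$ is closed, the stalks of $i_*\calf$ are $\calf_x$ for $x \in M$ and $0$ otherwise, so $i_*$ is an exact functor that moreover carries flasque sheaves to flasque sheaves and satisfies $\Gamma(\C^N, i_*(-)) = \Gamma(M, -)$. Applying $i_*$ to the Godement resolution of $\calf$ therefore produces a flasque resolution of $i_*\calf$ computing the same cohomology, giving $H^q(M,\calf) \cong H^q(\C^N, i_*\calf)$ for all $q$. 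Granting the standard fact that the direct image of a coherent analytic sheaf under a closed embedding is again coherent, $i_*\calf$ is coherent on $\C^N$, and it suffices to prove the theorem for coherent sheaves on $\C^N$.

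The next step is the model case: the structure sheaf on a polydisc. For an open polydisc $P \subset \C^N$, the $\bar{\partial}$-Poincar\'e lemma gives a resolution $0 \to \OO \to \cala^{0,0} \to \cala^{0,1} \to \cdots$ by fine sheaves, which are acyclic on the manifold $P$; by dimension-shifting through the long exact sequences of Theorem~\ref{t:long}, this acyclic resolution computes $H^q(P,\OO)$ as the Dolbeault cohomology $H^{0,q}(P)$. The Dolbeault--Grothendieck lemma, which solves $\bar{\partial}$ one coordinate at a time using the Cauchy kernel, makes this vanish for $q \ge 1$, and the same computation gives $H^q(P, \OO^{\oplus p}) = 0$ for every free sheaf.

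The core of the argument is then Cartan's Theorems A and B on a polydisc, which I would prove together by induction on the dimension $N$. By Oka's coherence theorem $\calf$ is locally finitely presented, so near each point there is an exact sequence $\OO^{\oplus q} \to \OO^{\oplus p} \to \calf \to 0$; the inductive step upgrades these local presentations to global generation and, ultimately, to a finite free resolution $0 \to \OO^{\oplus p_m} \to \cdots \to \OO^{\oplus p_0} \to \calf \to 0$ over $P$, by slicing off the last coordinate $z_N$ and combining the one-variable $\bar{\partial}$-theory on the $z_N$-disc (equivalently, solvability of the additive Cousin problem) with the inductive hypothesis on the $(N-1)$-dimensional slices. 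Once such a free resolution exists, I would break it into short exact sequences and chase the long exact cohomology sequences of Theorem~\ref{t:long}, feeding in the model-case vanishing $H^q(P,\OO^{\oplus p}) = 0$ at each stage to conclude $H^q(P,\calf) = 0$ for all $q \ge 1$.

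Finally I would globalize by exhausting $\C^N = \bigcup_\nu P_\nu$ as an increasing union of open polydiscs with $\overline{P_\nu} \subset P_{\nu+1}$. For $q \ge 2$ the vanishing on each $P_\nu$ forces $H^q(\C^N,\calf) = 0$, since both $\varprojlim H^q(P_\nu,\calf)$ and the correction term $\varprojlim^1 H^{q-1}(P_\nu,\calf)$ vanish. The delicate case is $q = 1$, where $H^1(\C^N,\calf) \cong \varprojlim^1 \Gamma(P_\nu,\calf)$, so one must show this $\varprojlim^1$ vanishes; this follows from an Oka--Weil / Runge-type approximation theorem asserting that $\Gamma(P_{\nu+1},\calf) \to \Gamma(P_\nu,\calf)$ has dense image. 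I expect the two genuine obstacles to be precisely the dimension induction establishing Theorems A and B on one polydisc and this approximation statement, since both require several-variable input that goes beyond the sheaf-theoretic formalism already in place.
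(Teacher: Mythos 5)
The paper does not actually prove this theorem: it states Cartan's theorem~B and refers to Gunning--Rossi for the proof, so there is no argument in the text to compare yours against. What you have written is, in outline, the classical proof from that literature: reduce to $\C^N$ by pushing forward along the closed embedding (this step is sound and can even be carried out with the paper's own tools --- exactness of $i_*$, preservation of flasqueness, Corollary~\ref{c:flasque_acyclic} and Theorem~\ref{4t:acyclic} --- modulo the coherence of $i_*\calf$, which itself already requires Oka-type coherence theorems rather than pure sheaf formalism); vanishing for free sheaves on polydiscs via the $\bar{\partial}$-Poincar\'e lemma; Theorems A and B on a polydisc by induction on dimension; globalization by exhaustion with a ${\varprojlim}^1$ argument.

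As a proof, however, the proposal has genuine gaps, and they sit exactly where the content of the theorem lies. First, the inductive step on a polydisc is black-boxed: the passage from local presentations $\OO^{\oplus q} \to \OO^{\oplus p} \to \calf \to 0$ to global generation and a global finite free resolution is the hard part, and it cannot be done with the additive Cousin problem alone --- patching local systems of generators is a \emph{multiplicative} problem, solved classically by Cartan's lemma on factorization of invertible holomorphic matrices (the attachment lemma), whose proof is a nontrivial iteration and convergence argument that your sketch does not supply. Second, the $q=1$ case of your globalization needs density of the restriction maps $\Gamma(P_{\nu+1},\calf) \to \Gamma(P_{\nu},\calf)$; for sections of a general coherent sheaf this presupposes the canonical Fr\'echet topology on section spaces and an Oka--Weil approximation theorem, again substantial several-variable analysis. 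You flag both points honestly as obstacles, but since they constitute essentially all of the difficulty, what you have is a correct roadmap of the standard proof rather than a proof; filling in those two boxes amounts to reproducing the chapter of Gunning--Rossi that the paper cites.
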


For a proof, see \cite[Th.~14, p.~243]{gunning--rossi}.

Let $X$ be a smooth quasi-projective variety defined over the complex
numbers and endowed with the Zariski topology.
The underlying set of $X$ with the complex topology is a complex
manifold $\Xand$.
Similarly, if $U$ is a Zariski open subset of $X$, let $U_{\an}$
be the underlying set of $U$ with the complex topology.
Since Zariski open sets are open in the complex topology,
$U_{\an}$ is open in  $\Xand$.

Denote by $\calo_{X\suban}$ the sheaf of holomorphic functions on
$\Xand$.
If $\calf$ is a coherent algebraic sheaf on $X$, then $X$ has an open
cover $\{ U \}$ by Zariski open sets such that on each open set $U$
there is an exact sequence
\[
\calo_U^{\oplus q} \to \calo_U^{\oplus p} \to \calf|_U \to 0
\]
of algebraic sheaves.
Moreover, $\{ U\suban \}$ is an open cover of $\Xand$ and
the morphism $\calo_U^{\oplus q} \to \calo_U^{\oplus p}$ of
algebraic sheaves induces a morphism
$\calo_{U\suban}^{\oplus q} \to \calo_{U\suban}^{\oplus p}$ of analytic sheaves.
Hence, there is a coherent analytic sheaf $\calf_{\an}$ on
the complex manifold $\Xand$ defined by
\[
\calo_{U\suban}^{\oplus q} \to \calo_{U\suban}^{\oplus p} \to \calf\suban|_{U\suban} \to 0.
\]
(Rename the open cover $\{ U\suban \}$ as $\{ U_{\ga} \}_{\ga \in
{\text{A}}}$.
A section of $\calf\suban$ over an open set $V \subset X\suban$ is a
collection of sections $s_{\ga} \in (\calf\suban|_{U_{\ga}})
(U_{\ga}\cap V)$
that agree on all pairwise intersections $(U_{\ga}\cap V) \cap (U_{\gb}\cap V)$.)

In this way one obtains a functor $(\ )\suban$ from the category
of smooth complex quasi-projective varieties and coherent algebraic
sheaves to the category of complex manifolds and analytic
sheaves.
Serre's GAGA (``G\'eom\'etrie alg\'ebrique et g\'eom\'etrie
analytique'') principle\index{GAGA principle} \cite{serre56} asserts that for smooth complex projective
varieties,
the functor $(\ )\suban$ is an equivalence of categories and moreover,
for all $q$, there are isomorphisms of cohomology groups \label{3p:gaga}
\begin{equation} \label{e:gaga}
H^q(X, \calf) \simeq H^q (\Xand, \calf\suban),
\end{equation}
where the left-hand side is the sheaf cohomology of $\calf$ on $X$ endowed
with the Zariski topology and the right-hand side is the sheaf cohomology
of $\calf\suban$ on $\Xand$ endowed with the complex topology.

When $X$ is a smooth complex quasi-projective variety,
to distinguish between sheaves of algebraic and sheaves of holomorphic forms,
we write $\Omega_{\alg}^p$
for the sheaf of algebraic $p$-forms on $X$
and $\Omega\suban^p$ for the sheaf of holomorphic $p$-forms on
$\Xand$
(for the definition of
algebraic forms, see the introduction to this chapter).
If $z_1, \ldots, z_n$ are local parameters for $X$ \cite[Ch.~II, Sec.~2.1, p.~98]{shafarevich},
then both $\Omega_{\alg}^p$ and $\Omega\suban^p$ are locally free with
frame $\{ dz_{i_1} \wedge \cdots \wedge dz_{i_p} \}$, where $I = (i_1,
\ldots, i_p)$ is a strictly increasing multiindex between $1$ and $n$
inclusive.  (For the algebraic case, see \cite[Vol.~1, Ch.~III, Sec.~5.4, Th.~4,
p.~203]{shafarevich}.)
Hence, locally there are sheaf isomorphisms
\[
0 \to \calo_U^{\binom{n}{p}} \to  \Omega_{\alg}^p|_U \to 0 \quad \text{and}
\quad 0 \to \calo_{U\suban}^{\binom{n}{p}} \to  \Omega_{\an}^p|_{U\suban}\to 0,
\]
which show that $\Omega\suban^p$ is the coherent analytic sheaf
associated to the coherent algebraic sheaf $\Omega_{\alg}^p$.

Let $k$ be a field. An \term{affine closed set} in $k^N$ is the zero
set of finitely many polynomials on $k^N$, and an \term{affine
  variety} is an algebraic variety biregular to an affine
closed set.
The algebraic analogue of Cartan's theorem~B is the following vanishing theorem
of Serre for an affine variety \cite[Sec.~44, Cor.~1, p.~237]{serre55}.

\begin{theorem}[Serre] \label{t:affinity}
A coherent algebraic sheaf $\calf$ on an affine variety $X$ is
acyclic on $X$,
i.e., $H^q(X, \calf) = 0$ for all $q \ge 1$.\index{Serre's theorem!on the cohomology of an affine\\ variety}
\end{theorem}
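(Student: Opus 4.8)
The plan is to translate the statement into commutative algebra and then deduce the vanishing from a well-chosen flasque resolution. Since $X$ is an affine variety over $\C$, its ring $A := \calo_X(X)$ of global regular functions is a finitely generated $\C$-algebra, hence Noetherian, and by Serre's correspondence from FAC the coherent algebraic sheaves on $X$ are exactly the sheaves $\tilde M$ attached to finitely generated $A$-modules $M$, with $\calf = \tilde M$ for $M = \Gamma(X,\calf)$. On a basic open set $D(f) = \{x \in X : f(x) \neq 0\}$ one has $\tilde M(D(f)) = M_f$, and in particular $\Gamma(X,\tilde M) = M$. First I would record the two facts that drive everything: the association $M \mapsto \tilde M$, which makes sense for \emph{every} $A$-module, is exact (localization is exact), and $\Gamma(X,\tilde M) = M$ naturally in $M$. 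Together these say that $\Gamma(X,-)$ is an exact functor on the sheaves of the form $\tilde M$.

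Next I would compute $H^q(X,\calf)$ through such a resolution rather than the Godement one. Choose an injective resolution $0 \to M \to I^0 \to I^1 \to \cdots$ of $M$ in the category of $A$-modules and apply $\tilde{(\ )}$. Exactness of $M \mapsto \tilde M$ makes $0 \to \tilde M \to \tilde{I^0} \to \tilde{I^1} \to \cdots$ a resolution of $\calf = \tilde M$. Injective modules are rarely finitely generated, so the $\tilde{I^j}$ need not be coherent; this is harmless, since all I want from them is flasqueness. The decisive input is the lemma that, over a Noetherian ring, the sheaf $\tilde I$ attached to an injective module $I$ is flasque. Granting it, $\tilde{I^\bullet}$ is a flasque resolution, so by Corollary~\ref{c:flasque_acyclic} it computes sheaf cohomology. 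Applying $\Gamma(X,-)$ returns the complex $I^\bullet$, because $\Gamma(X,\tilde{I^j}) = I^j$; and since $0 \to M \to I^\bullet$ was exact, $h^q(I^\bullet) = 0$ for $q > 0$ (and $= M$ for $q = 0$). Hence $H^q(X,\calf) = 0$ for all $q \ge 1$, as asserted.

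The hard part is precisely the flasqueness lemma, i.e.\ showing that the restriction $\tilde I(X) = I \to \tilde I(D(f)) = I_f$ is surjective for every $f \in A$ when $I$ is injective over the Noetherian ring $A$. After reducing surjectivity of $\tilde I(X) \to \tilde I(U)$ from a general open $U$ to the basic opens $D(f)$ (which form a basis and on which sections are the localizations $I_f$), this is the only genuinely nonformal step. Given an element $x/f^n \in I_f$ I would lift it by exploiting injectivity: the ascending chain of annihilators $\mathrm{Ann}(f) \subseteq \mathrm{Ann}(f^2) \subseteq \cdots$ stabilizes by Noetherianity, which lets one define a homomorphism out of an ideal of $A$ carrying a suitable power of $f$ to $x$ and then extend it to all of $A$ by Baer's criterion; evaluating the extension at $1$ produces the required preimage in $I$.

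Alternatively, once the \v{C}ech machinery of Part~II is available, one could try to prove acyclicity by showing that the \v{C}ech complex of $\calf$ on the cover of $X$ by basic open sets $D(f)$ is exact. I would not pursue this, however, because that route is essentially circular: it presupposes that coherent sheaves are already acyclic on the affine intersections $D(fg)$. The module-theoretic argument above is cleaner, since its entire content is concentrated in the single flasqueness lemma.
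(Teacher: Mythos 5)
Your proposal attacks a statement that the paper itself never proves: Theorem~\ref{t:affinity} is simply quoted from Serre's FAC \cite[Sec.~44]{serre55}, where the vanishing is obtained by \v{C}ech-theoretic computations on coverings by basic open sets. What you outline instead is the module-theoretic route of \cite[Ch.~III, Sec.~3]{hartshorne}: write $\calf = \tilde{M}$, resolve $M$ by injective $A$-modules, and show that $\tilde{I}$ is flasque for $I$ injective, so that the resulting flasque resolution computes $H^*(X,\calf)$ by Corollary~\ref{c:flasque_acyclic} and Theorem~\ref{4t:acyclic}. That strategy is legitimate and fits this paper's Godement framework well, and your Baer-criterion argument for the surjectivity of $I \to I_f$ is correct as stated. (Incidentally, your dismissal of the \v{C}ech route as circular applies only to Theorem~\ref{t:chisom}-style arguments; Serre's actual proof computes the relevant \v{C}ech complexes directly by algebra and is not circular.)

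The genuine gap is your claim that flasqueness of $\tilde{I}$ ``reduces'' to surjectivity onto the basic opens $D(f)$, so that the Baer step is ``the only genuinely nonformal step.'' That reduction is not formal; indeed, the implication you are implicitly invoking is false even for quasi-coherent sheaves on a smooth affine variety. Take $A = \C[x,y]$, $K = \C(x,y)$, and $M = K/A$. Since localization is the identity on $K$, the map $M \to M_f = K/A_f$ is surjective for \emph{every} $f$, so $\tilde{M}(X) \to \tilde{M}(D(f))$ is always surjective; yet $\tilde{M}$ is not flasque: over $U = D(x)\cup D(y)$ the pair $\bigl(\,0 + A_x,\ -1/(xy) + A_y\,\bigr)$ defines a section, because $1/(xy) \in A_{xy}$, but a global preimage $k + A$ would give $k \in A_x$ and $k + 1/(xy) \in A_y$, hence $1/(xy) \in A_x + A_y$, which is impossible since $x^{i-1}y^{j-1} \notin (x^i, y^j)$. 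The underlying issue is that a section over $\bigcup_i D(f_i)$ is a compatible tuple in $\prod_i I_{f_i}$; lifting each entry separately via Baer yields global elements that disagree on overlaps by torsion elements, and gluing them requires killing a \v{C}ech-$H^1$-type obstruction, which your hypothesis alone cannot do. This is exactly where injectivity of $I$ must be used a second time and in a nonformal way: the standard repair (Hartshorne III.3.2--3.4) proves via the Artin--Rees lemma that $\Gamma_{\fa}(I) = \{\, x \in I : \fa^n x = 0 \ \text{for some } n \,\}$ is again injective over a noetherian ring, and then runs a noetherian induction to extend sections from arbitrary opens. Until you supply this lemma (or an equivalent, such as the fact that localizations of injectives over noetherian rings remain injective, fed into an induction on the number of basic opens), the flasqueness claim --- and with it your whole proof --- is incomplete.
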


\section{The Hypercohomology of a Complex of Sheaves}\label{ch2:sec4}

This section requires some knowledge of double complexes
and their associated spectral sequences.
One possible reference is \cite[Chs.~2 and 3]{bott--tu}.
The hypercohomology of a complex $\call^{\bullet}$ of
sheaves of abelian groups on a topological space $X$
generalizes the cohomology of a single sheaf.
To define it, first form the double complex of
global sections of the Godement resolutions of the sheaves $\call^q$:
\[
K  = \bigoplus_{p,q} K^{p,q} = \bigoplus_{p,q} \Gamma(X, \calc^p\call^q).
\]
This double complex comes with two
differentials: a horizontal differential
\[
\delta\colon K^{p,q} \to K^{p+1, q}
\]
induced from the Godement resolution and a
vertical differential
\[
d\colon K^{p,q} \to K^{p,q+1}
\]
induced from the complex $\call^{\bullet}$.
Since
the differential $d\colon \call^q \to \call^{q+1}$
induces a morphism of complexes $\calc\bu \call^q
\to \calc\bu \call^{q+1}$,
where $\calc\bu$ is the Godement resolution,
the vertical differential in
the double complex $K$
commutes with the horizontal differential.
The \term{hypercohomology}\index{hypercohomology} $\hh^*(X, \call^{\bullet})$
of the complex $\call^{\bullet}$ is the total cohomology
of the double complex, i.e., the cohomology of the
associated single complex
\[
K\bu = \bigoplus K^k = \bigoplus_k \bigoplus_{p+q=k} K^{p,q}
\]
with differential $D = \delta + (-1)^p d$:
\[
\hh^k(X, \call^{\bullet}) = H_D^k (K\bu).
\]

If the complex of sheaves $\call^{\bullet}$ consists of a single
sheaf $\call^0 = \calf$ in degree~$0$,
\[
0 \to \calf \to 0 \to 0 \to \cdots,
\]
then the double complex $\bigoplus K^{p,q} = \bigoplus \Gamma(X, \calc^p \call^q)$ has nonzero entries only in the zeroth row, which
is simply the complex of sections of the Godement resolution
of $\calf$:
\begin{center}
\begin{pspicture}(-1,-.5)(7,2)
\psline{->}(0,-.1)(7,-.1)
\psline{->}(0,-.1)(0,1.7)
\psline(2,-.1)(2,1.7)
\psline(4,-.1)(4,1.7)
\psline(6,-.1)(6,1.7)
\uput{.2}[180](-0,.75){$K = \quad$}
\rput(1,-0.4){$0$}
\rput(3,-0.4){$1$}
\rput(5,-0.4){$2$}
\rput(1,0.25){$\Gamma(X, \calc^0\calf)$}
\rput(3,0.25){$\Gamma(X, \calc^1\calf)$}
\rput(5,0.25){$\Gamma(X, \calc^2\calf)$}
\rput(1,.75){$0$}
\rput(3,.75){$0$}
\rput(5,.75){$0$}
\rput(1,1.25){$0$}
\rput(3,1.25){$0$}
\rput(5,1.25){$0$}
\uput{.2}[270](7,-.1){$p$}
\uput{.2}[180](0,1.7){$q$}
\end{pspicture}
\end{center}
In this case, the associated single complex is the complex
$\Gamma(X, \calc^{\bullet} \calf)$ of global sections
of the Godement resolution of $\calf$,
and the hypercohomology of $\call^{\bullet}$
is the sheaf cohomology of $\calf$:
\begin{equation} \label{e:single}
\hh^k(X, \call^{\bullet}) = h^k \big(\Gamma(X, \calc^{\bullet}\calf)\big) = H^k(X,\calf).
\end{equation}
It is in this sense that hypercohomology generalizes sheaf
cohomology.

\subsection{The Spectral Sequences of Hypercohomology}\index{hypercohomology!spectral sequence of}%
\index{spectral sequence!of hypercohomology}

Associated to any double complex $(K$, $d$, $\delta)$
with commuting differentials $d$ and $\delta$ are two
spectral sequences converging to the total cohomology
$H_D^*(K)$.
One spectral sequence starts with $E_1= H_d$
and $E_2 = H_{\delta}H_d$.\index{spectral sequence!first}\index{spectral sequence!usual}
By reversing the roles of $d$ and $\delta$,
we obtain a second spectral sequence with $E_1= H_{\delta}$
and $E_2 = H_d H_{\delta}$
(see \cite[Ch.~II]{bott--tu}).\index{spectral sequence!second}
By the \emph{usual} spectral sequence\index{usual spectral sequence| \textit{see} {first spectral sequence }}
 of a double complex, we will
mean the first spectral sequence, with the vertical differential $d$
as the initial differential.

In the category of groups, the $E_{\infty}$ term is the associated graded
group of the total cohomology $H_D^*(K)$ relative to a canonically defined filtration
and is not necessarily isomorphic to $H_D^*(K)$ because of the extension
phenomenon in group theory.

Fix a nonnegative integer $p$ and let $T = \Gamma\big(X, \calc^p(\ )\big)$
be the Godement sections functor that associates to a sheaf $\calf$ on a
topological space $X$ the group of sections
$\Gamma(X, \calc^p\calf)$ of the Godement sheaf $\calc^p \calf$.
Since $T$ is an exact functor
by Corollary~\ref{c:functor},
by Proposition~\ref{p:exact} it commutes with cohomology:
\begin{equation} \label{e:commute}
h^q \big(T(\call^{\bullet})\big) = T\big(\calh^q(\call^{\bullet})\big),
\end{equation}
where $\calh^q := \calh^q(\call\bu)$ is the $q$th cohomology sheaf
of the complex $\call\bu$ (see Section~\ref{ss:cohomsheaves}).
For the double complex $K = \bigoplus \Gamma(X, \calc^p \call^q)$,
the $E_1$ term of the first spectral sequence is
the cohomology of $K$ with respect to the vertical differential $d$.
Thus, $E_1^{p,q} = H_d^{p,q}$ is the $q$th cohomology of the
$p$th column $K^{p,\bullet} = \Gamma\big(X, \calc^p(\call\bu)\big)$ of $K$:
\begin{alignat*}{2}
E_1^{p,q} = H_d^{p,q} &= h^q(K^{p,\bullet})
= h^q \big(\Gamma(X, \calc^p \call^{\bullet})\big)\\
&= h^q \big(T(\call^{\bullet})\big) &\quad&\text{(definition of $T$)} \\
&= T \big(\calh^q(\call^{\bullet})\big) &\quad&\text{(by \eqref{e:commute})}\\
&= \Gamma(X, \calc^p \calh^q) &\quad&\text{(definition of $T$)}.
\end{alignat*}
Hence, the $E_2$ term of the first spectral sequence is\index{first spectral sequence!$E_2$ term}
\begin{equation} \label{e:e2}
E_2^{p,q} = H_{\delta}^{p,q}(E_1) = H_{\delta}^{p,q}H_d^{\bullet,\bullet} = h_{\delta}^p(H_d^{\bullet,q})
= h_{\delta}^p \big(
\Gamma(X, \calc\bu\calh^q)\big) =\boxed{H^p(X, \calh^q)}.
\end{equation}

Note that the $q$th row of the double complex
$\bigoplus K^{p,q} = \bigoplus \Gamma(X, \calc^p \call^q)$
calculates the sheaf cohomology of $\call^q$ on $X$.
Thus, the $E_1$ term of the second spectral sequence is
\begin{equation} \label{e:e1}
E_1^{p,q} = H_{\delta}^{p,q} = h_{\delta}^p(K^{\bullet,q})
= h_{\delta}^p\big( \Gamma(X, \calc\bu \call^q)\big)
= \boxed{H^p (X, \call^q)}
\end{equation}
and the $E_2$ term is\index{second spectral sequence!$E_2$ term}
\[
E_2^{p,q} = H_d^{p,q}(E_1) = H_d^{p,q} H_{\delta}^{\bullet,\bullet}
= h_d^q\big(H_{\delta}^{p,\bullet}\big) = h_d^q\big(H^p(X,\call\bu)\big).
\]

\begin{theorem} \label{t:qi}
A quasi-isomorphism $\calf^{\bullet} \to \calg^{\bullet}$ of
complexes of sheaves of abelian groups over a topological space $X$
(see p.~\pageref{p:quasi})
induces a canonical isomorphism in hypercohomology:\index{quasi-isomorphism!induces an isomorphism in cohomology}
\[
\hh^*(X, \calf^{\bullet}) \overset{\sim}{\to} \hh^*(X, \calg^{\bullet}).
\]
\end{theorem}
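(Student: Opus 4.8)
The plan is to deduce the statement from the first spectral sequence of hypercohomology, whose $E_2$ term was identified in \eqref{e:e2} as $E_2^{p,q} = H^p(X, \calh^q(\call\bu))$, combined with a standard comparison argument for first-quadrant spectral sequences.

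First I would produce the canonical map and all the auxiliary morphisms. A morphism $\varphi\colon \calf\bu \to \calg\bu$ of complexes of sheaves induces, by the functoriality of the Godement resolution established earlier, a morphism of double complexes $\Gamma(X, \calc^p\calf^q) \to \Gamma(X, \calc^p\calg^q)$ commuting with both the horizontal and vertical differentials. Passing to total complexes yields the map $\hh^*(X,\calf\bu) \to \hh^*(X,\calg\bu)$ of the statement, and simultaneously a morphism of the associated first spectral sequences. Its canonicity and naturality are immediate from the construction, since every step is functorial in $\varphi$.

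Next I would use the quasi-isomorphism hypothesis. By definition (see p.~\pageref{p:quasi}), $\varphi$ induces isomorphisms $\calh^q(\calf\bu) \overset{\sim}{\to} \calh^q(\calg\bu)$ of cohomology sheaves for all $q$. Applying the sheaf cohomology functor $H^p(X, \argbl)$ and invoking the identification \eqref{e:e2}, the induced map on $E_2$ terms, $H^p(X, \calh^q(\calf\bu)) \to H^p(X, \calh^q(\calg\bu))$, is an isomorphism for every pair $(p,q)$.

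The remaining step, and the place where care is needed, is to propagate an isomorphism on $E_2$ to an isomorphism on the abutment $\hh^*$. Since $\varphi$ commutes with each differential $d_r$, an inductive application of the five lemma shows that an isomorphism on $E_r$ forces an isomorphism on $E_{r+1} = H(E_r)$; hence $\varphi$ is an isomorphism on $E_r^{p,q}$ for all $r \ge 2$. Because the double complex lies in the first quadrant, the terms $E_r^{p,q}$ stabilize to $E_\infty^{p,q}$ for $r$ large, so $\varphi$ is an isomorphism on $E_\infty$. Finally, the filtration induced on $\hh^k$ is finite in each total degree $k$, so comparing the two finite filtrations one step at a time, using the five lemma on the short exact sequences relating successive filtration quotients to the subquotients $E_\infty^{p,q}$, shows that $\varphi$ is an isomorphism on each $\hh^k(X,\calf\bu)$. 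The main obstacle is precisely this convergence bookkeeping: one must know the spectral sequences are bounded, so that both the stabilization at $E_\infty$ and the finiteness of the filtration hold, which here is guaranteed by first-quadrant positivity of the double complex.
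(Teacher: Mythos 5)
Your proposal is correct and follows essentially the same route as the paper's own proof: functoriality of the Godement construction gives the canonical map and a morphism of first spectral sequences, the identification $E_2^{p,q}=H^p(X,\calh^q)$ from \eqref{e:e2} turns the quasi-isomorphism hypothesis into an isomorphism on $E_2$, and the finite (first-quadrant) filtration on each $\hh^k$ is compared step by step with the five lemma. The only cosmetic difference is that you spell out the $E_r\Rightarrow E_{r+1}$ induction and stabilization at $E_\infty$, which the paper compresses into the single assertion that an isomorphism of $E_2$ terms induces one of $E_\infty$ terms.
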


\begin{proof}
By the functoriality of the Godement sections functors, a morphism $\calf\bu \to \calg\bu$ of complexes
of sheaves induces a homomorphism $\Gamma(X, \calc^p\calf^q) \to \Gamma(X, \calc^p\calg^q)$
that commutes with the two differentials $d$ and $\delta$ and hence induces a homomorphism
$\hh^*(X, \calf\bu) \to \hh^*(X,\calg\bu)$ in hypercohomology.

Since the spectral sequence construction is functorial, the morphism $\calf\bu \to \calg\bu$
also induces a morphism $E_r(\calf\bu) \to E_r(\calg\bu)$ of spectral sequences
and a morphism of the filtrations $$F_p\big( H_D(K_{\calf\bu})\big)\to F_p\big( H_D(K_{\calg\bu})\big)$$
on the hypercohomology
of $\calf\bu$ and $\calg\bu$.
We will shorten the notation $F_p\big( H_D(K_{\calf\bu})\big)$ to $F_p(\calf\bu)$.

By definition, the quasi-isomorphism $\calf^{\bullet} \to \calg^{\bullet}$
induces an isomorphism of cohomology sheaves $\calh^*(\calf^{\bullet})
\overset{\sim}{\to} \calh^*(\calg^{\bullet})$,
and by \eqref{e:e2} an isomorphism of the $E_2$ terms of the first
spectral sequences of $\calf^{\bullet}$ and of $\calg^{\bullet}$:
\[
E_2^{p,q}(\calf^{\bullet}) = H^p\big(X, \calh^q(\calf^{\bullet})\big) \overset{\sim}{\to}
H^p\big(X, \calh^q(\calg^{\bullet})\big) = E_2^{p,q}(\calg^{\bullet}).
\]
An isomorphism of the $E_2$ terms induces an isomorphism
of the $E_{\infty}$ terms:
\[
\bigoplus_p \frac{F_p(\calf\bu)}{F_{p+1}(\calf\bu)} = E_{\infty}(\calf\bu) \overset{\sim}{\to}
E_{\infty}(\calg\bu) = \bigoplus_p \frac{F_p(\calg\bu)}{F_{p+1}(\calg\bu)}.
\]

We claim that in fact, the canonical homomorphism $\hh^*(X,\calf\bu) \to \hh^*(X, \calg\bu)$ is
an isomorphism.
Fix a total degree $k$ and let $F_p^k(\calf\bu) = F_p(\calf\bu) \cap \hh^k(X,\calf\bu)$.
Since $$K^{\bullet,\bullet}(\calf\bu) = \bigoplus \Gamma(X, \calc^p\calf^q)$$ is a first-quadrant double complex,
the filtration $\{ F_p^k(\calf\bu) \}_p$ on $\hh^k(X,\calf\bu)$ is finite in length:
\[
\hh^k(X,\calf\bu) = F_0^k(\calf\bu) \supset F_1^k(\calf\bu) \supset \cdots \supset F_{k}^k(\calf\bu) \supset
F_{k+1}^k(\calf\bu) = 0.
\]
A similar finite filtration $\{F_p^k(\calg\bu)\}_p$ exists on $\hh^k(X, \calg\bu)$.

%
%
Suppose $F_p^k(\calf\bu) \to F_p^k(\calg\bu)$ is an isomorphism.
We will prove that $F_{p-1}^k(\calf\bu) \to F_{p-1}^k(\calg\bu)$ is an isomorphism.
In the commutative diagram
\[
\xymatrix{
0 \ar[r] & F_{p}^k(\calf\bu) \ar[r] \ar[d] & F_{p-1}^k(\calf\bu) \ar[r] \ar[d] & F_{p-1}^k(\calf\bu)/F_{p}^k(\calf\bu) \ar[r] \ar[d] & 0 \\
0 \ar[r] & F_{p}^k(\calg\bu) \ar[r] & F_{p-1}^k(\calg\bu) \ar[r]  & F_{p-1}^k(\calg\bu)/F_{p}^k(\calg\bu) \ar[r] & 0,
}
\]
the two outside vertical maps are isomorphisms,
by the induction hypothesis and because $\calf\bu \to \calg\bu$ induces an
isomorphism of the associated graded groups.
By the five lemma, the middle vertical map $F_{p-1}^k(\calf\bu) \to F_{p-1}^k(\calg\bu)$ is also
an isomorphism.
By induction on the filtration subscript $p$, as $p$ moves from $k+1$ to $0$,
we conclude that
\[
\hh^k(X, \calf\bu) = F_{0}^k(\calf\bu)\to  F_{0}^k(\calg\bu) = \hh^k(X, \calg\bu)
\]
is an isomorphism.
\end{proof}

\begin{theorem} \label{t:acyclic}
If $\call^{\bullet}$ is a complex of acyclic sheaves of abelian groups on
a topological space $X$, then the hypercohomology of $\call\bu$
is isomorphic to the cohomology of the complex of
global sections of $\call\bu$:\index{hypercohomology!of a complex of acyclic sheaves}
\[
\hh^k(X, \call\bu) \simeq h^k\big(\call\bu(X)\big),
\]
where $\call\bu(X)$ denotes the complex
\[
0 \to \call^0(X) \to \call^1(X) \to \call^2(X) \to \cdots .
\]
\end{theorem}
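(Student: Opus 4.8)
The plan is to read the result off the second spectral sequence of the double complex $K^{p,q} = \Gamma(X, \calc^p\call^q)$ that computes $\hh^*(X, \call\bu)$, namely the one whose $E_1$ page is obtained by first taking cohomology in the Godement direction $\delta$. By \eqref{e:e1} its $E_1$ term is
\[
E_1^{p,q} = H^p(X, \call^q),
\]
with the induced differential $d_1$ coming from the differential $d$ of the complex $\call\bu$ and moving in the $q$-direction (so that $E_2^{p,q} = h_d^q(H^p(X,\call\bu))$, as computed just before the statement).

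First I would feed in the acyclicity hypothesis. Since each $\call^q$ is acyclic on $X$, we have $H^p(X,\call^q) = 0$ for every $p \ge 1$, so the $E_1$ page is concentrated in the single column $p = 0$. There, Proposition~\ref{2p:sections} identifies $E_1^{0,q} = H^0(X,\call^q) = \Gamma(X,\call^q) = \call^q(X)$, and $d_1$ is exactly the map $\call^q(X) \to \call^{q+1}(X)$ induced by $d$. Taking $d_1$-cohomology then gives
\[
E_2^{0,q} = h^q\big(\call\bu(X)\big), \qquad E_2^{p,q} = 0 \ \text{ for } p \ge 1.
\]

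Next I would argue degeneration and settle the extension question. With $E_2$ supported on the single column $p = 0$, every higher differential $d_r$ for $r \ge 2$ has either its source or its target sitting in a column with negative first index, hence vanishes; so the sequence degenerates at $E_2$ and $E_\infty^{0,q} = h^q(\call\bu(X))$ with all other $E_\infty^{p,q} = 0$. For each fixed total degree $k$, the associated graded of the hypercohomology filtration on $\hh^k(X,\call\bu)$ therefore has a unique nonzero piece, namely $E_\infty^{0,k}$; a finite filtration with a single nonzero graded quotient forces $\hh^k(X,\call\bu) \cong E_\infty^{0,k} = h^k(\call\bu(X))$, so the extension phenomenon flagged after the construction of the spectral sequences is vacuous here. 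This delivers the claimed isomorphism.

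The only genuine subtlety, and hence the step to watch, is the degree bookkeeping for the second spectral sequence: one must keep $p$ as the Godement index and $q$ as the $\call\bu$ index throughout, so that the surviving locus is the \emph{column} $p = 0$ rather than a row. It is precisely this that makes both the vanishing of the higher differentials and the triviality of the extension problem immediate; everything else is a direct application of the acyclicity hypothesis together with Proposition~\ref{2p:sections} and \eqref{e:e1}.
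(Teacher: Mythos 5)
Your proposal is correct and takes essentially the same route as the paper: both run the second spectral sequence of the Godement double complex $\bigoplus \Gamma(X, \calc^p\call^q)$, use \eqref{e:e1} together with acyclicity to collapse $E_1$ to the single column $p=0$ with entries $\call^q(X)$, and conclude $\hh^k(X,\call\bu) \simeq E_2^{0,k} = h^k\big(\call\bu(X)\big)$. The only difference is that you make explicit the vanishing of the higher differentials and the triviality of the extension problem, points the paper's proof leaves implicit in the phrase ``the spectral sequence degenerates at the $E_2$ term.''
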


\pf
Let $K$ be the double complex $K = \bigoplus K^{p,q} = \bigoplus
\calc^p \call^q(X)$.
Because each $\call^q$ is acyclic on $X$,
in the second spectral sequence of $K$,
by \eqref{e:e1} the $E_1$ term is
\[
E_1^{p,q}   = H^p(X, \call^q) =
\begin{cases}
 \call^q(X) &\text{for } p = 0,\\
0 &\text{for } p > 0.
\end{cases}
\]
\begin{center}
\begin{pspicture}(-1,-.5)(7,2)
\psline{->}(0,-.1)(7,-.1)
\psline{->}(0,-.1)(0,1.7)
\psline(2,-.1)(2,1.7)
\psline(4,-.1)(4,1.7)
\psline(6,-.1)(6,1.7)
\uput{.2}[180](-0,.75){$E_1=H_{\delta} =\quad$}
\rput(1,-0.4){$0$}
\rput(3,-0.4){$1$}
\rput(5,-0.4){$2$}
\rput(1,0.25){$\call^0(X)$}
\rput(3,0.25){$0$}
\rput(5,0.25){$0$}
\rput(1,.75){$\call^1(X)$}
\rput(3,.75){$0$}
\rput(5,.75){$0$}
\rput(1,1.25){$\call^2(X)$}
\rput(3,1.25){$0$}
\rput(5,1.25){$0$}
\uput{.2}[270](7,-.1){$p$}
\uput{.2}[180](0,1.7){$q$}
\end{pspicture}
\end{center}
Hence,
\[
E_2^{p,q} = H_d^{p,q}H_{\delta} =
\begin{cases}
\ h^q \big(\call\bu(X)\big) &\text{for } p=0,\\
\ 0 &\text{for } p > 0.
\end{cases}
\]
Therefore, the spectral sequence degenerates at the $E_2$ term
and
\[
\hh^k(X, \call\bu) \simeq E_2^{0,k} = h^k\big(\call\bu(X)\big).
\]
\end{proof}

\subsection{Acyclic Resolutions}

Let $\calf$ be a sheaf of abelian groups on a topological space $X$.
A resolution
\[
0 \to \calf \to \call^0 \to \call^1 \to \call^2 \to \cdots
\]
of $\calf$ is said to be \term{acyclic}\index{acyclic resolution}\index{resolution!acyclic} on $X$ if each sheaf $\call^q$
is acyclic on $X$, i.e., $H^k(X, \call^q) = 0$ for all $k >0$.

If $\calf$ is a sheaf on $X$, we will denote by $\calf\bu$ the
complex of sheaves such that $\calf^0 = \calf$ and $\calf^k =0$ for
$k >0$.

\begin{theorem} \label{4t:acyclic}
If $\ 0 \to \calf \to \call\bu$ is an acyclic resolution of the sheaf $\calf$
on
a topological space $X$, then the cohomology of $\calf$ can be
computed from the complex of global sections of $\call\bu$:\index{sheaf cohomology!using acyclic resolution}
\[
H^k(X,\calf) \simeq h^k \big(\call\bu(X)\big).
\]
\end{theorem}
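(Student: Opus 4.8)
The plan is to reduce the statement to the two hypercohomology results already established, by recognizing the acyclic resolution as a quasi-isomorphism. Regard $\calf\bu$ (the complex with $\calf$ in degree~$0$ and zero elsewhere) and $\call\bu$ (the complex $\call^0 \to \call^1 \to \cdots$, forgetting the augmentation $\calf \to \call^0$) as two complexes of sheaves on $X$. The inclusion $\calf \hookrightarrow \call^0$ defines a morphism of complexes $\calf\bu \to \call\bu$ concentrated in degree~$0$.

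First I would check that this morphism is a quasi-isomorphism in the sense of page~\pageref{p:quasi}. By Proposition~\ref{p:stalkcohomology}, the stalk of $\calh^q(\call\bu)$ at a point $p$ is the $q$th cohomology of the complex of stalks $\call_p\bu$. Since $0 \to \calf \to \call^0 \to \call^1 \to \cdots$ is exact as a sequence of sheaves, the induced sequence of stalks is exact, so $\calh^0(\call\bu)_p = \ker(\call_p^0 \to \call_p^1) = \calf_p$ while $\calh^q(\call\bu)_p = 0$ for $q > 0$. Hence $\calh^0(\call\bu) \simeq \calf = \calh^0(\calf\bu)$ and $\calh^q(\call\bu) = 0 = \calh^q(\calf\bu)$ for $q > 0$, so the morphism $\calf\bu \to \call\bu$ induces isomorphisms on all cohomology sheaves.

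Next I would string together three identifications. By Theorem~\ref{t:qi}, the quasi-isomorphism $\calf\bu \to \call\bu$ yields a canonical isomorphism $\hh^k(X, \calf\bu) \overset{\sim}{\to} \hh^k(X, \call\bu)$ in hypercohomology. The left-hand side is, by \eqref{e:single}, nothing but the sheaf cohomology $H^k(X, \calf)$, since $\calf\bu$ is a single sheaf in degree~$0$. For the right-hand side, the hypothesis that the resolution is acyclic means each $\call^q$ is an acyclic sheaf, so Theorem~\ref{t:acyclic} applies and gives $\hh^k(X, \call\bu) \simeq h^k(\call\bu(X))$. Chaining these,
\[
H^k(X, \calf) = \hh^k(X, \calf\bu) \simeq \hh^k(X, \call\bu) \simeq h^k\big(\call\bu(X)\big),
\]
which is the desired isomorphism.

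The argument is essentially a bookkeeping of previously proved facts, so there is no serious analytic obstacle; the one point demanding care is the passage between the \emph{exactness} of the augmented resolution $0 \to \calf \to \call\bu$ as a sequence of sheaves and the \emph{vanishing} of the cohomology sheaves of the unaugmented complex $\call\bu$ in positive degrees. This is precisely what makes $\calf\bu \to \call\bu$ a quasi-isomorphism and allows Theorem~\ref{t:qi} to fire.
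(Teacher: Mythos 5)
Your proposal is correct and follows essentially the same route as the paper's own proof: interpret the acyclic resolution as a quasi-isomorphism $\calf\bu \to \call\bu$, apply Theorem~\ref{t:qi}, identify the left side via \eqref{e:single} and the right side via Theorem~\ref{t:acyclic}. The only cosmetic difference is that you verify the quasi-isomorphism stalkwise using Proposition~\ref{p:stalkcohomology}, while the paper checks it directly at the level of cohomology sheaves; the two verifications are equivalent.
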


\pf
The resolution $0 \to \calf \to \call\bu$ may be viewed as a
quasi-isomorphism of the two complexes
\[
\xymatrix{
0 \ar[r] & \calf \ar[d] \ar[r] & 0 \ar[d] \ar[r] & 0 \ar[d] \ar[r] &
\cdots\\
0 \ar[r] & \call^0 \ar[r] & \call^1 \ar[r] & \call^2 \ar[r]  & \cdots ,
}
\]
since
\[
\calh^0(\mbox{top row}) = \calh^0(\calf\bu) = \calf
\simeq \im(\calf \to \call^0)
= \ker(\call^0 \to \call^1) =
\calh^0(\mbox{bottom row})
\]
and the higher cohomology sheaves of both complexes are zero.
By Theorem~\ref{t:qi}, there is an induced morphism
in hypercohomology
\[
\hh^k (X, \calf\bu) \simeq \hh^k(X, \call\bu).
\]
The left-hand side is simply the sheaf cohomology $H^k(X, \calf)$
by \eqref{e:single}.
By Theorem~\ref{t:acyclic}, the right-hand side is
$h^k(\call\bu(X))$.
Hence,
\[
H^k(X,\calf) \simeq h^k \big(\call\bu(X)\big). 
\]
\epf
So in computing
sheaf cohomology, any acyclic resolution of $\calf$ on a topological space $X$
can take the place of the Godement resolution.

Using acyclic resolutions,
we can give simple proofs of
de Rham's and Dolbeault's
theorems.

\begin{exa}[De Rham's theorem]\index{de Rham theorem}
By the Poincar\'e lemma (\cite[Sec.~4, p.~33]{bott--tu},
\cite[p.~38]{griffiths--harris}),\index{Poincar\'e lemma} on a
$\cinf$ manifold $M$ the sequence of sheaves
\begin{equation} \label{4e:deRham}
0 \to \underline{\R} \to \cala^0 \to \cala^1 \to \cala^2 \to \cdots
\end{equation}
is exact.
Since each $\cala^k$ is fine and hence acyclic on $M$,
\eqref{4e:deRham} is an acyclic resolution of $\underline{\R}$.
By Theorem~\ref{4t:acyclic},
\[
H^*(M, \underline{\R}) \simeq h^*\big(\cala\bu(M)\big) = H_{\text{dR}}^*(M).
\]
Because the sheaf cohomology $H^*(M, \underline{\R})$ of a manifold
is isomorphic to the real singular cohomology of $M$
(Remark~\ref{r:cohomology}),
de Rham's theorem follows.

\end{exa}

\begin{exa}[Dolbeault's theorem]\index{Dolbeault theorem}
According to the $\bar{\partial}$-Poincar\'e lemma \cite[pp.~25 and 38]{griffiths--harris},
on a complex manifold $M$
the sequence of sheaves
\[
0 \to \Omega^p \to \cala^{p,0} \overset{\bar{\partial}}{\to}
\cala^{p,1} \overset{\bar{\partial}}{\to}
\cala^{p,2} \to \cdots
\]
is exact.
As in the previous example, because each sheaf $\cala^{p,q}$ is
fine and hence acyclic, by Theorem~\ref{4t:acyclic},
\[
H^q(M, \Omega^p) \simeq h^q\big(\cala^{p,\bullet}(M)\big)
= H^{p,q}(M).
\]
This is the Dolbeault isomorphism for a complex
manifold $M$.
\end{exa}

\section{The Analytic de Rham Theorem}

The analytic de Rham theorem is the analogue of the classical de Rham
theorem for a complex manifold,
according to which the singular cohomology with $\C$ coefficients of any
complex manifold can be computed from its sheaves of holomorphic
forms.
Because of the holomorphic Poincar\'e lemma,
the analytic de Rham theorem is far easier to prove than its
algebraic counterpart.

\subsection{The Holomorphic Poincar\'e Lemma}

Let $M$ be a complex manifold and $\Omega\suban^k$ the sheaf of holomorphic
$k$-forms on $M$.
Locally, in terms of complex coordinates $z_1, \ldots, z_n$, a
holomorphic form can be written as $\sum a_I\, dz_{i_1} \wedge \cdots
\wedge dz_{i_n}$, where the $a_I$ are holomorphic functions.
Since for a holomorphic function $a_I$,
\[
d a_I = \partial a_I + \bar{\partial} a_I = \sum_i \dpd{a_I}{z_i}\, dz_i + \sum_i
\dpd{a_I}{\bar{z}_i} d\bar{z}_i
= \sum_i \dpd{a_I}{z_i} dz_i,
\]
the exterior derivative $d$ maps holomorphic forms to
holomorphic forms.
Note that $a_I$ is holomorphic if and only if $\bar{\partial} a_I = 0$.

\begin{theorem}[Holomorphic Poincar\'e lemma]\label{t:holompoincare}
On a complex manifold $M$ of complex dimension $n$, the sequence\index{holomorphic Poincar\'e lemma}%
\index{Poincar\'e lemma!holomorphic}
\[
0 \to \underline{\C} \to \Omega\suban^0 \overset{d}{\to} \Omega\suban^1
\overset{d}{\to} \cdots \to \Omega\suban^n \to 0
\]
of sheaves is exact.
\end{theorem}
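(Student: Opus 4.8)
The plan is to follow the same strategy used earlier for the smooth de Rham sequence: exactness of a sequence of sheaves may be tested on stalks (see Section~\ref{s:exact}), so it suffices to prove the corresponding statement for the stalk complex at each point $p \in M$. Since the assertion is purely local, I would first choose holomorphic coordinates $z_1,\dots,z_n$ identifying a neighborhood of $p$ with a polydisc $\Delta$ centered at the origin, and then reduce the theorem to two germ-level claims: (a) a germ $[f] \in \Omega\suban^0$ with $d[f]=0$ is the germ of a constant; and (b) for $k \ge 1$, every germ of a closed holomorphic $k$-form admits a holomorphic primitive.

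Claim (a) is immediate. For a holomorphic function one has $df = \partial f = \sum_i (\partial f/\partial z_i)\,dz_i$, since $\bar\partial f = 0$. Hence $d[f]=0$ forces every $\partial f/\partial z_i$ to vanish near $p$, so $f$ is locally constant and $[f] \in \underline{\C}_p$; conversely, constants are closed. This yields exactness at $\Omega\suban^0$ and, together with the obvious injectivity of $\underline{\C} \hookrightarrow \Omega\suban^0$, exactness at $\underline{\C}$.

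The heart of the matter is (b), and the essential point is not merely finding a primitive but finding a \emph{holomorphic} one: the smooth Poincar\'e lemma already produces a smooth $(k-1)$-form $\tau$ with $d\tau = \omega$, but $\tau$ need not be holomorphic, so a homotopy operator adapted to the complex structure is required. On $\Delta$ I would use the radial homotopy $H(t,z) = tz$, $t \in [0,1]$, together with the holomorphic Euler field $E = \sum_i z_i\,\partial/\partial z_i$, and define, for a holomorphic $k$-form $\omega = \sum_{|I|=k} a_I\,dz_I$,
\[
(K\omega)(z) = \int_0^1 \bigl(\iota_{\partial_t} H^{\ast}\omega\bigr)(t,z)\,dt
= \int_0^1 \sum_{|I|=k}\sum_{m=1}^{k} (-1)^{m-1}\, t^{k-1}\, a_I(tz)\, z_{i_m}\, dz_{I\setminus i_m}\,dt,
\]
a $(k-1)$-form on $\Delta$. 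Two things then have to be checked. First, the classical homotopy identity $dK\omega + K\,d\omega = \omega$ holds for $k \ge 1$ verbatim as in the smooth case, the boundary contribution at $t=0$ dropping out because $H(0,\cdot)$ is the constant map and $\omega$ has positive degree. Second --- and this is the only genuinely complex-analytic input --- $K\omega$ is again holomorphic: contraction with the holomorphic field $E$ uses only $\iota_E\,dz_j = z_j$, so each summand is holomorphic, the integrand $z \mapsto t^{k-1}a_I(tz)\,z_{i_m}$ is holomorphic in $z$ for fixed $t$ and jointly continuous in $(t,z)$, and integrating a family holomorphic in $z$ over the real parameter $t$ preserves holomorphicity (equivalently, $\bar\partial$ commutes with $\int_0^1(\cdot)\,dt$ and annihilates the integrand). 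Thus a closed holomorphic $\omega$ of degree $k \ge 1$ satisfies $\omega = dK\omega$ with $K\omega$ holomorphic, which is claim (b). In particular, the case $k = n$ gives exactness at $\Omega\suban^n$, every holomorphic $n$-form being automatically closed.

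The step I expect to demand the most care is the verification that $K$ preserves holomorphicity, i.e.\ the interchange of $\bar\partial$ with the $t$-integral; everything else is either formal (reduction to stalks, the homotopy identity) or a one-line computation (the degree-zero case). An alternative to the explicit operator would be an induction on the number of variables $n$, integrating out one complex variable at a time and checking holomorphy in the remaining parameters by differentiation under the integral sign, but the radial homotopy operator seems the most economical route.
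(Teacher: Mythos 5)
Your proof is correct, but it follows a genuinely different route from the chapter's. You argue at the level of stalks on a polydisk and build an explicit homotopy operator $K$ adapted to the radial contraction $H(t,z)=tz$, the single complex-analytic point being that $K$ preserves holomorphy --- i.e., that $z \mapsto \int_0^1 t^{k-1} a_I(tz)\,z_{i_m}\,dt$ is again holomorphic, by differentiation under the integral sign (or Morera plus Fubini); the homotopy identity itself costs nothing, since holomorphic forms are in particular smooth and both $d$ and $K$ preserve the holomorphic subcomplex. The chapter instead proves the lemma with no explicit primitives at all: it forms the double complex $\bigoplus \cala^{p,q}$ of sheaves of smooth $(p,q)$-forms with the anticommuting differentials $\partial$ and $\bar{\partial}$, uses the $\bar{\partial}$-Poincar\'e lemma to identify the $E_1$ term of the usual spectral sequence with the complex $\Omega_{\mathrm{an}}^{\bullet}$ concentrated in the row $q=0$, and uses the smooth Poincar\'e lemma to compute the cohomology of the associated single complex $\cala_{\C}^{\bullet}$; degeneration at $E_2$ then forces $\calh_d^k(\Omega_{\mathrm{an}}^{\bullet}) \simeq \calh_d^k(\cala_{\C}^{\bullet})$, which is exactly the statement. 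Your argument buys self-containedness: it needs neither the $\bar{\partial}$-Poincar\'e lemma (a nontrivial analytic input the chapter cites from Griffiths--Harris) nor spectral sequences, and it produces explicit holomorphic primitives. The chapter's argument buys economy within its own framework: every ingredient has already been set up or cited, so the proof becomes a purely formal degeneration argument, the same double-complex pattern the chapter reuses elsewhere. The one step you flagged as delicate, interchanging $\bar{\partial}$ with $\int_0^1(\cdot)\,dt$, is indeed the only analytic verification your route requires, and it is standard.
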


\pf
We will deduce the holomorphic Poincar\'e lemma from the smooth
Poincar\'e lemma and the $\bar{\partial}$-Poincar\'e lemma
by a double complex argument.
The double complex $\bigoplus \cala^{p,q}$ of sheaves of smooth
$(p,q)$-forms has two differentials $\partial$ and $\bar{\partial}$.
These differentials anticommute because
\begin{align*}
0 &= d\comp d = (\partial + \bar{\partial}) (\partial +
\bar{\partial})
= \partial^2 + \bar{\partial}  \partial + \partial
\bar{\partial}
+\bar{\partial}^2 \\
& = \bar{\partial} \partial + \partial \bar{\partial}.
\end{align*}
The associated single complex $\bigoplus \cala_{\C}^k$,
where $\cala_{\C}^k = \bigoplus_{p+q=k} \cala^{p,q}$ with
differential
$d = \partial + \bar{\partial}$, is simply the usual complex of sheaves
of smooth $\C$-valued differential forms on $M$.
By the smooth Poincar\'e lemma,
\[
\calh_d^k (\cala_{\C}^{\bullet}) = \begin{cases}
\underline{\C} &\text{for } k=0,\\
0 &\text{for } k > 0.
\end{cases}
\]

By the $\bar{\partial}$-Poincar\'e lemma, the sequence
\[
0 \to \Omega\suban^p \to \cala^{p,0} \overset{\bar{\partial}}{\to}
\cala^{p,1} \overset{\bar{\partial}}{\to} \cdots \to
\cala^{p,n} \to 0
\]
is exact for each $p$ and so the $E_1$ term of the
usual spectral sequence of the
double complex $\bigoplus \cala^{p,q}$ is
\begin{center}
\begin{pspicture}(-1,-.5)(7,2)
\psline{->}(0,-.1)(7,-.1)
\psline{->}(0,-.1)(0,1.7)
\psline(2,-.1)(2,1.7)
\psline(4,-.1)(4,1.7)
\psline(6,-.1)(6,1.7)
\uput{.2}[180](-0,.75){$E_1 = H_{\bar{\partial}} =$}
\rput(1,-0.4){$0$}
\rput(3,-0.4){$1$}
\rput(5,-0.4){$2$}
\rput(1,0.25){$\Omega\suban^0$}
\rput(3,0.25){$\Omega\suban^1$}
\rput(5,0.25){$\Omega\suban^2$}
\rput(1,.75){$0$}
\rput(3,.75){$0$}
\rput(5,.75){$0$}
\rput(1,1.25){$0$}
\rput(3,1.25){$0$}
\rput(5,1.25){$0$}
\uput{.2}[270](7,-.1){$p$}
\uput{.2}[180](0,1.7){$q$}
\rput(7.2,-.1){.}
\end{pspicture}
\end{center}
Hence, the $E_2$ term is given by
\[
E_2^{p,q} =
\begin{cases}
\calh_d^p(\Omega\suban\bu)&\text{for } q = 0,\\
  0 &\text{for }q > 0.
\end{cases}
\]
Since the spectral sequence degenerates at the $E_2$ term,
\[
\calh_d^k(\Omega\suban\bu) = E_2 = E_{\infty} \simeq \calh_d^k(\cala_{\C}\bu) =
\begin{cases}
\underline{\C} &\text{for } k=0,\\
0 &\text{for } k > 0,
\end{cases}
\]
which is precisely the holomorphic Poincar\'e lemma.
 \epf

\subsection{The Analytic de Rham Theorem}\index{analytic de Rham theorem}\index{de Rham theorem!analytic}

\begin{theorem}\label{t:analytic}
Let $\Omega\suban^k$ be the sheaf of holomorphic $k$-forms on a complex
manifold $M$.
Then the singular cohomology of $M$ with complex coefficients can be
computed as the hypercohomology of the complex $\Omega\suban\bu$:
\[
H^k(M,\C) \simeq \hh^k (M, \Omega\suban\bu).
\]
\end{theorem}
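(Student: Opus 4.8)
The plan is to identify the holomorphic de Rham complex $\Omega\suban\bu$ as a resolution of the constant sheaf $\underline{\C}$ and then feed this into the hypercohomology machinery already assembled, so that the whole statement reduces to the comparison between sheaf cohomology of $\underline{\C}$ and singular cohomology.

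First I would reinterpret the holomorphic Poincar\'e lemma (Theorem~\ref{t:holompoincare}). The exactness of
\[
0 \to \underline{\C} \to \Omega\suban^0 \overset{d}{\to} \Omega\suban^1 \overset{d}{\to} \cdots \to \Omega\suban^n \to 0
\]
says precisely that the cohomology sheaves of the complex $\Omega\suban\bu$ are $\calh^0(\Omega\suban\bu) = \underline{\C}$ and $\calh^q(\Omega\suban\bu) = 0$ for $q > 0$. Writing $\underline{\C}\bu$ for the complex consisting of $\underline{\C}$ in degree $0$ and $0$ elsewhere, the augmentation $\underline{\C} \to \Omega\suban^0$ then defines a morphism of complexes of sheaves $\underline{\C}\bu \to \Omega\suban\bu$ that induces isomorphisms on all cohomology sheaves; that is, it is a quasi-isomorphism, exactly as in the proof of Theorem~\ref{4t:acyclic}.

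Next I would apply the hypercohomology results in sequence. By Theorem~\ref{t:qi}, the quasi-isomorphism $\underline{\C}\bu \to \Omega\suban\bu$ induces an isomorphism
\[
\hh^k(M, \underline{\C}\bu) \overset{\sim}{\to} \hh^k(M, \Omega\suban\bu).
\]
Since $\underline{\C}\bu$ is concentrated in a single degree, \eqref{e:single} identifies its hypercohomology with ordinary sheaf cohomology, $\hh^k(M, \underline{\C}\bu) = H^k(M, \underline{\C})$. Finally, Remark~\ref{r:cohomology} gives $H^k(M, \underline{\C}) \simeq H^k(M, \C)$, the singular cohomology with complex coefficients. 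Chaining these identifications yields
\[
H^k(M, \C) \simeq H^k(M, \underline{\C}) = \hh^k(M, \underline{\C}\bu) \simeq \hh^k(M, \Omega\suban\bu),
\]
as desired.

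I do not expect a genuine obstacle: all of the substantive content has already been packaged into the holomorphic Poincar\'e lemma and into Theorem~\ref{t:qi}, so this argument is essentially an assembly of cited facts. The only points demanding a moment of care are the bookkeeping identification of the degree-$0$ cohomology sheaf of $\Omega\suban\bu$ with $\underline{\C}$ (so that the augmentation genuinely is a quasi-isomorphism of complexes, which is what Theorem~\ref{t:qi} consumes), and the standing paracompactness hypothesis on $M$ that is implicit in Remark~\ref{r:cohomology} when passing from the sheaf cohomology of the constant sheaf to singular cohomology.
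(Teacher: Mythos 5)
Your proposal is correct and follows essentially the same route as the paper's own proof: interpret the holomorphic Poincar\'e lemma as a quasi-isomorphism $\underline{\C}\bu \to \Omega\suban\bu$, apply Theorem~\ref{t:qi}, identify $\hh^k(M,\underline{\C}\bu)$ with $H^k(M,\underline{\C})$ via \eqref{e:single}, and conclude with Remark~\ref{r:cohomology}. Your closing remark about paracompactness is a reasonable point of care, and it is harmless here since complex manifolds are paracompact, as noted in the paper's discussion of fine sheaves.
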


\pf
Let $\underline{\C}\bu$ be the complex of sheaves that is
$\underline{\C}$ in degree $0$ and zero otherwise.
The holomorphic Poincar\'e lemma may be interpreted as
a quasi-isomorphism of the two complexes
\[
\xymatrix{
0 \ar[r] &\underline{\C} \ar[d] \ar[r] & 0\ar[d] \ar[r] & 0 \ar[d] \ar[r] & {\cdots} \\
0 \ar[r] &\Omega_{\an}^0 \ar[r] &
\Omega_{\an}^1 \ar[r] & \Omega_{\an}^2 \ar[r] & {\cdots} ,
}
\]
since
\begin{alignat*}{2}
 \calh^0(\underline{\C}\bu) &= \underline{\C}
\simeq \im (\underline{\C} \to \Omega\suban^0)\\
&=\ker(\Omega\suban^0 \to \Omega\suban^1) &\quad&\text{(by the holomorphic
Poincar\'e lemma)} \\
&= \calh^0(\Omega\suban\bu)
\end{alignat*}
and the higher cohomology sheaves of both complexes are zero.

By Theorem~\ref{t:qi}, the quasi-isomorphism $\underline{\C}\bu \simeq
\Omega\suban\bu$ induces an isomorphism
\begin{equation}\label{e:hyper}
\hh^*(M, \underline{\C}\bu) \simeq \hh^*(M, \Omega\suban\bu)
\end{equation}
in hypercohomology.
Since $\underline{\C}\bu$ is a complex of sheaves concentrated in
degree $0$,
by \eqref{e:single} the left-hand side of \eqref{e:hyper} is
the sheaf cohomology $H^k(M, \underline{\C})$,
which is isomorphic to the singular cohomology $H^k(M, \C)$
by Remark~\ref{r:cohomology}.
 \epf

In contrast to the sheaves $\cala^k$ and $\cala^{p,q}$ in de Rham's theorem and Dolbeault's theorem,
the sheaves $\Omega\bu\suban$ are generally neither fine nor acyclic,
because in the analytic category there is no partition of unity.
However,
when $M$ is a Stein manifold, the complex $\Omega\suban\bu$ is a complex
of acyclic sheaves on $M$ by Cartan's theorem~B.
It then follows from Theorem~\ref{t:acyclic} that
\[
\hh^k(M, \Omega\suban\bu) \simeq h^k \big(\Omega\suban\bu(M)\big).
\]
This proves the following corollary of Theorem~\ref{t:analytic}.

\begin{cor} \label{c:analytic}
The singular cohomology of a Stein manifold $M$ with coefficients in $\C$
can
be computed from the holomorphic de Rham complex:\index{singular cohomology!of a Stein manifold}
\[
H^k(M, \C) \simeq h^k \big(\Omega\suban\bu(M)\big).
\]
\end{cor}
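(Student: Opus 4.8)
The plan is to read off the result from two theorems already proved, with Cartan's theorem~B supplying the one piece of genuine input. First I would invoke the analytic de Rham theorem (Theorem~\ref{t:analytic}), valid on \emph{any} complex manifold, to obtain
\[
H^k(M,\C) \simeq \hh^k(M, \Omega\suban\bu).
\]
This already reduces the corollary to showing that, on a Stein manifold, the hypercohomology on the right collapses to the cohomology $h^k\big(\Omega\suban\bu(M)\big)$ of the complex of global sections. By Theorem~\ref{t:acyclic}, such a collapse holds as soon as every term $\Omega\suban^k$ of the complex is acyclic on $M$, so the entire task is to establish that acyclicity.

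The key step is therefore to check that each sheaf $\Omega\suban^k$ of holomorphic $k$-forms is acyclic on the Stein manifold $M$. Here I would observe that $\Omega\suban^k$ is locally free of rank $\binom{n}{k}$, with local frame $\{dz_{i_1}\wedge\cdots\wedge dz_{i_k}\}$; being locally free of finite rank it admits a finite presentation, so it is a coherent analytic sheaf (coherence of the structure sheaf $\calo_M$ being Oka's theorem). Cartan's theorem~B then applies verbatim and gives $H^q(M, \Omega\suban^k)=0$ for all $q\ge 1$ and all $k$, which is precisely acyclicity of each $\Omega\suban^k$ on $M$.

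With acyclicity secured, Theorem~\ref{t:acyclic} yields $\hh^k(M, \Omega\suban\bu) \simeq h^k\big(\Omega\suban\bu(M)\big)$, and concatenating with the analytic de Rham isomorphism gives the claimed $H^k(M,\C)\simeq h^k\big(\Omega\suban\bu(M)\big)$. The only nonformal ingredient is the appeal to Cartan's theorem~B, and the one point that must not be skipped is the verification of coherence of the $\Omega\suban^k$: without it Cartan's theorem does not apply, and indeed the text has just warned that the $\Omega\suban^k$ are in general neither fine nor acyclic, so the Stein hypothesis enters exactly here.
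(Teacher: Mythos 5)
Your proof is correct and follows exactly the paper's route: the analytic de Rham theorem (Theorem~\ref{t:analytic}), Cartan's theorem~B giving acyclicity of the coherent sheaves $\Omega\suban^k$ on the Stein manifold, and Theorem~\ref{t:acyclic} to collapse hypercohomology to the cohomology of global sections. Your explicit verification that each $\Omega\suban^k$ is coherent (locally free, hence finitely presented, with $\calo_M$ coherent by Oka's theorem) is a point the paper leaves implicit, having established it earlier in its discussion of locally free analytic sheaves.
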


\section{The Algebraic de Rham Theorem for a Projective Variety}

Let $X$ be a smooth complex algebraic variety with the
Zariski topology.
The underlying set of $X$ with the complex topology
is a complex manifold $\Xand$.
Let $\Omega_{\alg}^k$ be the sheaf of algebraic
$k$-forms on $X$,
and $\Omega_{\an}^k$ the sheaf
of holomorphic $k$-forms on $\Xand$.
According to the holomorphic Poincar\'e lemma
(Theorem~\ref{t:holompoincare}),
the complex of sheaves
\begin{equation} \label{e:holomorphic}
0 \to \underline{\C} \to \Omega_{\an}^0 \overset{d}{\to}
\Omega_{\an}^1 \overset{d}{\to}
\Omega_{\an}^2 \overset{d}{\to}  \cdots
\end{equation}
is exact.
However, there is no Poincar\'e lemma in the algebraic
category; the complex
\[
0 \to \underline{\C} \to \Omega_{\alg}^0 \to
\Omega_{\alg}^1 \to
\Omega_{\alg}^2 \to \cdots
\]
is in general not exact.

\begin{theorem}[Algebraic de Rham theorem for a projective variety]\label{t:projective}
If $X$ is a smooth complex projective variety, then there is
an isomorphism\index{algebraic de Rham theorem!for a projective variety}
\[
H^k(\Xand, \C) \simeq \hh^k(X, \Omega\bu\alg)
\]
between the singular cohomology of $\Xand$ with coefficients in $\C$
and the hypercohomology of $X$ with coefficients in the
complex $\Omega\bu\alg$ of sheaves of algebraic differential
forms on $X$.
\end{theorem}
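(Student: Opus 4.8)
The plan is to bridge the algebraic hypercohomology $\hh^k(X,\Omega\bu\alg)$ and the singular cohomology $H^k(\Xand,\C)$ through the \emph{analytic} hypercohomology $\hh^k(\Xand,\Omega\suban\bu)$, which the analytic de Rham theorem (Theorem~\ref{t:analytic}) already identifies with $H^k(\Xand,\C)$. Thus the entire content of the theorem reduces to producing a natural isomorphism
\[
\hh^k(X,\Omega\bu\alg)\;\simeq\;\hh^k(\Xand,\Omega\suban\bu),
\]
comparing the hypercohomology of the algebraic de Rham complex on $X$ (Zariski topology) with that of the holomorphic de Rham complex on $\Xand$ (complex topology). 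Projectivity of $X$ enters only through this comparison, via Serre's GAGA principle; note that no algebraic Poincar\'e lemma is available, so $\Omega\bu\alg$ is genuinely a nontrivial complex and the passage through hypercohomology (rather than the cohomology of a single sheaf) is essential.

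First I would construct a comparison morphism. The identity on underlying point sets is a continuous map $\iota\colon \Xand\to X$ (every Zariski-open set is open in the complex topology), and the tautological inclusion of algebraic forms into holomorphic forms furnishes a morphism of complexes of sheaves $\iota^{-1}\Omega\bu\alg\to\Omega\suban\bu$ on $\Xand$. By functoriality of the Godement resolutions and the pullback on global sections, this induces a map of the associated double complexes that commutes with both the exterior differential $d$ and the Godement differential $\delta$, hence a canonical homomorphism $\hh^k(X,\Omega\bu\alg)\to\hh^k(\Xand,\Omega\suban\bu)$ together with an induced morphism of the \emph{second} spectral sequences of the two hypercohomologies.

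Next I would compute the $E_1$ pages. By \eqref{e:e1}, the second spectral sequence of a complex $\call\bu$ has $E_1^{p,q}=H^p(\,\cdot\,,\call^q)$, so for the two complexes in question the comparison map reads
\[
E_1^{p,q}=H^p(X,\Omega^q_{\algg})\;\longrightarrow\;H^p(\Xand,\Omega^q_{\an})=E_1^{p,q}.
\]
Since $\Omega^q_{\an}$ is precisely the coherent analytic sheaf associated to the coherent algebraic sheaf $\Omega^q_{\algg}$ (as established in the GAGA section), the GAGA isomorphism \eqref{e:gaga} shows that each of these maps on $E_1$ is an isomorphism. Because the comparison arises from an honest morphism of complexes, these isomorphisms automatically intertwine the $d_1$-differentials, so we obtain an isomorphism of spectral sequences from the $E_1$ page onward, and in particular on $E_\infty$.

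Finally, both double complexes $\bigoplus\Gamma(X,\calc^p\Omega^q_{\algg})$ and $\bigoplus\Gamma(\Xand,\calc^p\Omega^q_{\an})$ are first-quadrant, so the filtrations on the abutments are finite; an isomorphism on $E_\infty$ then propagates to an isomorphism of the abutments by the same finite five-lemma induction on the filtration length used in the proof of Theorem~\ref{t:qi}. Chaining this with Theorem~\ref{t:analytic} gives $H^k(\Xand,\C)\simeq\hh^k(\Xand,\Omega\suban\bu)\simeq\hh^k(X,\Omega\bu\alg)$, as desired. The main obstacle is not the formal spectral-sequence bookkeeping but the input it consumes: the GAGA comparison \eqref{e:gaga} for the coherent sheaves $\Omega^q_{\algg}$, which is exactly where smoothness and projectivity of $X$ are indispensable and which packages the hard analytic content (Serre's theorems, Cartan's theorem~B for the local Stein models). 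One must also be slightly careful that these isomorphisms are natural enough to respect the differentials induced by $d$, which is why it is cleaner to induce everything from the single morphism of complexes $\iota^{-1}\Omega\bu\alg\to\Omega\suban\bu$ than to invoke GAGA termwise and patch the differentials by hand.
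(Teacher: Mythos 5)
Your proposal is correct and follows essentially the same route as the paper: identify $H^k(\Xand,\C)$ with $\hh^k(\Xand,\Omega\suban\bu)$ via the holomorphic Poincar\'e lemma (Theorems~\ref{t:qi} and \ref{t:analytic}), then compare the second spectral sequences of $\hh^*(X,\Omega\bu\alg)$ and $\hh^*(\Xand,\Omega\suban\bu)$, using GAGA \eqref{e:gaga} to identify the $E_1$ terms $H^p(X,\Omega_{\algg}^q)\simeq H^p(\Xand,\Omega_{\an}^q)$ and passing from $E_\infty$ to the abutment. If anything, you are more careful than the paper, which never explicitly constructs the comparison morphism $\iota^{-1}\Omega\bu\alg\to\Omega\suban\bu$ underlying a map of spectral sequences and simply asserts that the $E_1$ isomorphism ``induces an isomorphism in $E_\infty$''; your point that everything should be induced by a single morphism of complexes of sheaves, so that the termwise GAGA isomorphisms automatically intertwine the de Rham differentials (which are only $\C$-linear, not $\calo$-linear, hence not directly covered by GAGA functoriality) and the finite filtrations on the abutments, supplies exactly the bookkeeping the paper leaves implicit.
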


\pf
By Theorem~\ref{t:qi}, the quasi-isomorphism $\underline{\C}\bu \to
\Omega_{\an}\bu$ of complexes of sheaves induces
an isomorphism in hypercohomology
\begin{equation} \label{e:1}
\hh^*(\Xand, \underline{\C}\bu) \simeq \hh^*(\Xand, \Omega_{\an}\bu).
\end{equation}
In the second spectral sequence converging to
$\hh^*(\Xand, \Omega_{\an}\bu)$,
by \eqref{e:e1} the $E_1$ term is
\[
E_{1, {\an}}^{p,q} = H^p (\Xand, \Omega_{\an}^q).
\]
By \eqref{e:e1} the $E_1$ term
in the second spectral sequence converging to the
hypercohomology $\hh^*(X, \Omega_{\alg}\bu)$ is
\[
E_{1,{\alg}}^{p,q} = H^p (X, \Omega_{\alg}^q).
\]

Since $X$ is a smooth complex projective variety,
Serre's GAGA principle \eqref{e:gaga} applies  and gives an isomorphism
\[
H^p (X, \Omega_{\alg}^q) \simeq H^p (\Xand, \Omega_{\an}^q).
\]
The isomorphism $E_{1,{\alg}} \overset{\sim}{\to}E_{1, {\an}}$
induces an isomorphism in $E_{\infty}$.
Hence,
\begin{equation} \label{e:2}
\hh^*(X, \Omega_{\alg}\bu) \simeq \hh^*(\Xand, \Omega_{\an}\bu).
\end{equation}
Combining \eqref{e:single}, \eqref{e:1}, and \eqref{e:2} gives
\[
H^*(\Xand, \underline{\C}) \simeq \hh^*(\Xand, \underline{\C}\bu)
\simeq \hh^*(\Xand, \Omega_{\an}\bu)
\simeq \hh^*(X, \Omega_{\alg}\bu). 
\]
Finally, by the isomorphism between sheaf cohomology
and singular cohomology (Remark~\ref{r:cohomology}), we may replace the
sheaf cohomology $H^*(\Xand, \underline{\C})$ by a singular cohomology group:
\[
H^*(\Xand, {\C}) \simeq \hh^*(X, \Omega_{\alg}\bu).
\]
\epf

\section*{Part II.~\v{C}ech Cohomology and the Algebraic de Rham Theorem in General}
\addcontentsline{toc}{section}{\textbf{Part II.  \v{C}ech Cohomology and the Algebraic de Rham Theorem in General}}

The algebraic de Rham theorem (Theorem~\ref{t:projective}) in fact
does not require the hypothesis of projectivity on $X$.
In this section we will extend it to an arbitrary smooth algebraic
variety defined over $\C$.
In order to carry out this extension, we will need to develop two more
machineries: the \v{C}ech cohomology of a sheaf and
the \v{C}ech cohomology of a complex of sheaves.
\v{C}ech cohomology provides a practical method for computing
sheaf cohomology and hypercohomology.

\section{\v{C}ech Cohomology of a Sheaf}

\v{C}ech cohomology may be viewed as a generalization of the Mayer--Vietoris sequence
from two open sets to arbitrarily many open sets.

\subsection{\v{C}ech Cohomology of an Open Cover}

Let $\fraku = \{ U_{\ga} \}_{\ga \in \Alpha}$ be an open cover of
the topological space $X$ indexed by a linearly ordered set
$\Alpha$, and $\calf$ a presheaf of abelian groups on $X$. To
simplify the notation, we will write the $(p+1)$-fold intersection
$U_{\ga_0} \cap \cdots \cap U_{\ga_p}$ as $U_{\ga_0 \cdots
\ga_p}$. Define the \term{group of \v{C}ech $p$-cochains}\index{Cech cochains@\v{C}ech cochains}
on $\fraku$ with values in the presheaf
$\calf$ to be the direct product
\[
\check{C}^p(\fraku, \calf) := \prod_{\ga_0 < \cdots < \ga_p} \calf(U_{\ga_0
\cdots \ga_p}).
\]
An element $\omega$ of $\check{C}^p(\fraku, \calf)$ is then a
function that assigns to each finite set of indices $\ga_0, \ldots, \ga_p$
an element $\omega_{\ap} \in \calf(U_{\ap})$. We will write
$\omega= (\omega_{\ap})$, where the subscripts range over all
$\ga_0 < \cdots < \ga_p$.
In particular, the subscripts $\ga_0, \ldots, \ga_p$ must all be distinct.
Define the \term{\v{C}ech coboundary
operator}\index{Cech coboundary operator@\v{C}ech coboundary operator}
\[
\delta = \delta_p\colon \check{C}^p(\fraku, \calf) \rightarrow
\check{C}^{p+1}(\fraku, \calf)
\]
by the alternating sum formula
\[
(\delta \omega)_{\apone} = \sum_{i=0}^{p+1} (-1)^i \omega_{\ga_0
\cdots \widehat{\ga_i} \cdots \ga_{p+1}},
\]
where 
$\widehat{\ga_i}$ means to omit
the index $\ga_i$; moreover,
 the restriction of $\omega_{\ga_0
\cdots \widehat{\ga_i} \cdots \ga_{p+1}}$ from $U_{\ga_0 \cdots
\widehat{\ga_i} \cdots \ga_{p+1}}$ to $U_{\apone}$ is suppressed
in the notation.

\begin{prop} \label{6p:d2} If $\delta$ is the \v{C}ech coboundary
operator, then $\delta^2 =0$. \end{prop}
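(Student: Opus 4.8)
The plan is to verify directly that $(\delta^2\omega)_{\ga_0 \cdots \ga_{p+2}} = 0$ for every \v{C}ech $p$-cochain $\omega$ and every admissible index string $\ga_0 < \cdots < \ga_{p+2}$, by expanding the double alternating sum and matching up the terms in cancelling pairs. The conceptual point is that $\delta^2\omega$ is assembled from $\omega$ by deleting two of the indices $\ga_0, \ldots, \ga_{p+2}$ in every possible way, and that each such doubly-deleted term is produced exactly twice — once by removing the smaller of the two indices first, once by removing the larger first — with opposite signs.

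Concretely, I would first write
\[
(\delta^2\omega)_{\ga_0 \cdots \ga_{p+2}} = \sum_{i=0}^{p+2} (-1)^i (\delta\omega)_{\ga_0 \cdots \widehat{\ga_i} \cdots \ga_{p+2}},
\]
and then expand each inner term by applying the defining formula of $\delta$ to the $(p+2)$-element index string obtained after deleting $\ga_i$. The crucial bookkeeping is the reindexing forced by the first deletion: once $\ga_i$ is removed, an index originally in position $j$ with $j > i$ now occupies position $j-1$. Hence deleting a second index $\ga_j$ contributes the sign $(-1)^{j-1}$ when $j > i$ and the sign $(-1)^{j}$ when $j < i$, in addition to the outer sign $(-1)^i$.

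Carrying this through, the term of $\delta^2\omega$ in which both $\ga_a$ and $\ga_b$ (with $a < b$) have been deleted occurs with coefficient $(-1)^a (-1)^{b-1}$ from deleting $\ga_a$ before $\ga_b$, and with coefficient $(-1)^b (-1)^a$ from deleting $\ga_b$ before $\ga_a$. These equal $(-1)^{a+b-1}$ and $(-1)^{a+b}$, which are negatives of each other, so the two contributions cancel. As every term of $\delta^2\omega$ has exactly this shape for a unique pair $a<b$, the entire sum is zero.

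I expect the only real obstacle to be notational rather than mathematical: keeping the sign of the second deletion correct, since it is easy to forget the position shift induced by the first deletion. Once the reindexing is tracked faithfully the cancellation is automatic, and since the argument uses nothing beyond the alternating-sum formula, it holds verbatim for an arbitrary presheaf $\calf$ of abelian groups.
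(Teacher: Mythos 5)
Your proposal is correct and is essentially the paper's own proof: both expand $(\delta^2\omega)_{\ga_0\cdots\ga_{p+2}}$ as a double alternating sum, track the position shift so that the second deletion carries sign $(-1)^{j}$ or $(-1)^{j-1}$ according to whether it precedes or follows the first, and cancel the two occurrences of each doubly-deleted term. The sign bookkeeping you describe matches the paper's computation exactly.
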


\pf Basically, this is true because in $(\delta^2 \omega)_{\ga_0
\cdots \ga_{p+2}}$, we omit two indices $\ga_i$, $\ga_j$ twice
with opposite signs. To be precise,
\begin{align*}
(\delta^2 \omega)_{\ga_0  \cdots \ga_{p+2}} &=
\sum (-1)^i (\delta \omega)_{\ga_0 \cdots \widehat{\ga_i} \cdots \ga_{p+2}} \\
&= \sum_{j < i} (-1)^i (-1)^j
\omega_{\ga_0 \cdots\widehat{\ga_j} \cdots \widehat{\ga_i} \cdots \ga_{p+2}} \\
&\qquad + \sum_{j > i} (-1)^i (-1)^{j-1}
\omega_{\ga_0 \cdots\widehat{\ga_i} \cdots \widehat{\ga_j} \cdots \ga_{p+2}} \\
&= 0. 
\end{align*}
\epf

It follows from Proposition~\ref{6p:d2} that $\check{C}\bu(\fraku, \calf)
:= \bigoplus_{p=0}^{\infty} \check{C}^p(\fraku,\calf)$ is a cochain
complex with differential $\delta$. The
cohomology of the complex $(\check{C}^*(\fraku,\calf),\delta)$,
\[
\check{H}^p(\fraku,\calf) := \frac{\ker \delta_p}{\im \delta_{p-1}}
= \frac{\{ p\text{-cocycles}\}}{\{ p\text{-coboundaries}\}},
\]
is called the \term{\v{C}ech cohomology}%
\index{Cech cohomology@\v{C}ech cohomology}
 of the open cover $\fraku$ with values in the presheaf $\calf$.

\subsection{Relation Between \v{C}ech Cohomology and Sheaf Cohomology}

In this subsection we construct a natural map from the \v{C}ech
cohomology of a sheaf on an open cover to its sheaf cohomology.
This map is based on a property of flasque sheaves.

\begin{lemma} \label{l:flasque-cech}
Suppose $\calf$ is a flasque sheaf of abelian groups on a topological space $X$,
and $\fraku = \{ U_{\ga}\}$ is an open cover of $X$.
Then the augmented \v{C}ech complex
\[
0 \to \calf(X) \to \prod_{\ga} \calf(U_{\ga}) \to \prod_{\ga < \gb}
\calf(U_{\ga\gb})
\to \cdots
\]
is exact.
\end{lemma}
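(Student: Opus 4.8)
The plan is to realize the augmented \v{C}ech complex as the complex of global sections of an exact sequence of \emph{flasque} sheaves, and then invoke Proposition~\ref{p:flasque}\ref{ch2:prp224.iii}. First I would sheafify the \v{C}ech construction: for each $p$ let $j_{\ap}\colon U_{\ap}\hookrightarrow X$ be the inclusion of the $(p+1)$-fold intersection and set
\[
\shf{C}^p := \prod_{\ga_0 < \cdots < \ga_p} (j_{\ap})_*\big(\calf|_{U_{\ap}}\big),
\]
a sheaf on $X$ whose sections over an open set $V$ are $\shf{C}^p(V) = \prod_{\ga_0 < \cdots < \ga_p} \calf(U_{\ap}\cap V)$. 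In particular $\Gamma(X,\shf{C}^p) = \check{C}^p(\fraku,\calf)$, and the \v{C}ech coboundary sheafifies to a differential $\shf{C}^p \to \shf{C}^{p+1}$, so that the augmented \v{C}ech complex of the lemma is precisely the complex of global sections of
\[
0 \to \calf \to \shf{C}^0 \to \shf{C}^1 \to \shf{C}^2 \to \cdots .
\]

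Next I would prove this complex of sheaves is exact — a fact that holds for an arbitrary sheaf $\calf$ and uses nothing about flasqueness. Exactness of sheaves is local, so it suffices to produce a contracting homotopy on sections over any open $V$ contained in a single cover member $U_{\gb}$, as such $V$ form a basis of $X$. On such a $V$ one has $U_{\ap}\cap V = U_{\gb\ap}\cap V$, which lets me define $h\colon \shf{C}^p(V)\to \shf{C}^{p-1}(V)$ by inserting the distinguished index $\gb$, namely $(h\omega)_{\ga_0\cdots\ga_{p-1}} = \omega_{\gb\ga_0\cdots\ga_{p-1}}$ (interpreted antisymmetrically to reorder $\gb$ into position, and as $0$ on repeated indices), together with the augmentation splitting $\shf{C}^0(V)\to\calf(V)$ given by $\omega\mapsto \omega_{\gb}$. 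The standard computation then yields $\delta h + h\delta = \id$, proving exactness over $V$ and hence exactness of the sheaf complex.

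Then I would verify flasqueness. Since $\calf$ is flasque, each restriction $\calf|_{U_{\ap}}$ is flasque (any map $\calf(U)\to\calf(W)$ factors the surjection $\calf(X)\to\calf(W)$), each pushforward $(j_{\ap})_*(\calf|_{U_{\ap}})$ is flasque (its restriction map is the surjection $\calf(U_{\ap})\to\calf(U_{\ap}\cap V)$), and a product of flasque sheaves is flasque, since its restriction map is a product of surjections. Hence every $\shf{C}^p$ is flasque, so $0 \to \calf \to \shf{C}^0 \to \shf{C}^1 \to \cdots$ is an exact sequence of flasque sheaves, and Proposition~\ref{p:flasque}\ref{ch2:prp224.iii} applied with $U=X$ gives exactness of the global-section complex — which is exactly the augmented \v{C}ech complex.

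The main obstacle is conceptual rather than computational: local exactness of the \v{C}ech sheaf complex holds for \emph{every} sheaf, so the flasque hypothesis does no work until the very last step, where it is precisely what prevents the global-sections functor from destroying exactness. The one genuinely fiddly point is the sign and index bookkeeping in the homotopy $h$ — arranging the reordering of the inserted index $\gb$ and the alternating convention so that $\delta h + h\delta = \id$ — but this is routine once the distinguished-index trick is in place.
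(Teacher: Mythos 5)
Your proof is correct, but it is worth pointing out that the paper does not actually prove this lemma: its ``proof'' is the single citation to Godement [Th.~5.2.3(a), p.~207], so your argument supplies precisely what the paper treats as a black box, and it does so by what is essentially Godement's own method. The structure is sound at every step: the sheafified \v{C}ech complex $\shf{C}^p = \prod_{\ga_0<\cdots<\ga_p}(j_{\ap})_*\bigl(\calf|_{U_{\ap}}\bigr)$ is a complex of sheaves with $\Gamma(X,\shf{C}^p)=\check{C}^p(\fraku,\calf)$; its exactness holds for an \emph{arbitrary} sheaf $\calf$ via the distinguished-index homotopy (which passes to stalks because each point has a neighborhood basis of opens $V\subset U_{\gb}$ and $h$ commutes with restriction among such $V$); flasqueness survives restriction to opens, pushforward, and arbitrary products; and then Proposition~\ref{p:flasque}\ref{ch2:prp224.iii} applied with $U=X$ finishes the argument --- correctly noting that this proposition needs the initial sheaf to be flasque, which is exactly where the hypothesis on $\calf$ enters, along with the flasqueness of the $\shf{C}^p$. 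Your closing observation that flasqueness does no work until the global-sections step is the right way to understand the lemma. The only point to spell out in a final write-up is the alternating-extension convention for cochains indexed by strictly increasing tuples, so that $\omega_{\gb\ga_0\cdots\ga_{p-1}}$ is defined and the identity $\delta h + h\delta = \id$ survives the degenerate case where $\gb$ equals one of the $\ga_i$ (the two offending terms then cancel in pairs); this is routine, as you say. A side benefit of your route over the paper's citation is that it keeps Part~II self-contained and uses only machinery (flasque sheaves, Proposition~\ref{p:flasque}) already developed in Section~2.
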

\noindent
In other words, for a flasque sheaf $\calf$ on $X$,
\[
\check{H}^k(\fraku, \calf) =
\begin{cases}
\ \calf(X) &\text{for } k=0,\\
\ 0 & \text{for } k > 0.
\end{cases}
\]

\pf
\cite[Th.~5.2.3(a), p.~207]{godement}.
 \epf

Now suppose $\calf$ is any sheaf of abelian groups on a topological
space $X$ and $\fraku = \{ U_{\ga}\}$ is an open cover of $X$.
Let $K^{\bullet,\bullet}= \bigoplus K^{p,q}$ be the double complex
\[
K^{p,q} = \check{C}^p(\fraku, \calc^q\calf) = \prod_{\ga_0 < \cdots < \ga_p}
\calc^q\calf(U_{\ga_0 \cdots \ga_p}).
\]
We augment this complex with an outside bottom row ($q=-1$) and
an outside left column ($p = -1$):

\begin{equation}\label{e:cg}
\vcenter\bgroup
\xy
(-30,-5)*{0};
(-10,-5)*{\calf(X)};
(20,-5)*{\prod\calf(U_{\ga})};
(50,-5)*{\prod\calf(U_{\ga\gb})};
(-30,10)*{0};
(-10,10)*{\calc^0\calf(X)};
(20,10)*{\prod\calc^0\calf(U_{\ga})};
(50,10)*{\prod\calc^0\calf(U_{\ga\gb})};
(-30,25)*{0};
(-10,25)*{\calc^1\calf(X)};
(20,25)*{\prod\calc^1\calf(U_{\ga})};
(50,25)*{\prod\calc^1\calf(U_{\ga\gb})};
{\ar (4,38)*{}; (4,3)*{};};
{\ar (80,3)*{}; (4,3)*{};};
(81,0)*{p}; (2,40)*{q};
{\ar (-20,-5)*{}; (-25,-5)*{};};
{\ar (-20,10)*{}; (-25,10)*{};};
{\ar (-20,25)*{}; (-25,25)*{};};
{\ar (7,-5)*{}; (2,-5)*{};};
{\ar (7,10)*{}; (2,10)*{};};
{\ar (7,25)*{}; (2,25)*{};};
{\ar (37,-5)*{}; (33,-5)*{};};
{\ar (37,10)*{}; (33,10)*{};};
{\ar (37,25)*{}; (33,25)*{};};
{\ar (67,-5)*{}; (63,-5)*{};};
{\ar (67,10)*{}; (63,10)*{};};
{\ar (67,25)*{}; (63,25)*{};};
{\ar (-10,5.5)*{}; (-10,0)*{};};
{\ar (20,5.5)*{}; (20,0)*{};};
{\ar (50,5.5)*{}; (50,0)*{};};
(22,1.5)*{\epsilon}; (52,1.5)*{\epsilon};
{\ar (-10,19.5)*{}; (-10,15.5)*{};};
{\ar (20,19.5)*{}; (20,15.5)*{};};
{\ar (50,19.5)*{}; (50,15.5)*{};};
{\ar (-10,34.5)*{}; (-10,30.5)*{};};
{\ar (20,34.5)*{}; (20,30.5)*{};};
{\ar (50,34.5)*{}; (50,30.5)*{};};
\endxy
\egroup
\end{equation}

\bigskip

Note that the $q$th row of the double complex $K^{\bullet,\bullet}$ is the
\v{C}ech cochain complex of the Godement sheaf $\calc^q\calf$
and the $p$th column is the complex of groups for computing the sheaf
cohomology $\prod_{\ga_0 < \cdots < \ga_p}
H^*(U_{\ga_0 \cdots \ga_p}, \calf)$.

By Lemma~\ref{l:flasque-cech}, each row of the augmented double complex
\eqref{e:cg} is exact. Hence, the $E_1$ term of the second spectral sequence
of the double complex is
\begin{center}
\begin{pspicture}(-1,-.5)(7,2)
\psline{->}(0,-.1)(7,-.1)
\psline{->}(0,-.1)(0,1.7)
\psline(2,-.1)(2,1.7)
\psline(4,-.1)(4,1.7)
\psline(6,-.1)(6,1.7)
\uput{.2}[180](-0,.75){$E_1 = H_{\delta} =$}
\rput(1,-0.4){$0$}
\rput(3,-0.4){$1$}
\rput(5,-0.4){$2$}
\rput(1,0.25){$\calc^0\calf(X)$}
\rput(3,0.25){0}
\rput(5,0.25){0}
\rput(1,.75){$\calc^1\calf(X)$}
\rput(3,.75){$0$}
\rput(5,.75){$0$}
\rput(1,1.25){$\calc^2\calf(X)$}
\rput(3,1.25){$0$}
\rput(5,1.25){$0$}
\uput{.2}[270](7,-.1){$p$}
\uput{.2}[180](0,1.7){$q$}
\end{pspicture}
\end{center}
and the $E_2$ term is
\begin{center}
\begin{pspicture}(-1,-.5)(7,2)
\psline{->}(0,-.1)(7,-.1)
\psline{->}(0,-.1)(0,1.7)
\psline(2,-.1)(2,1.7)
\psline(4,-.1)(4,1.7)
\psline(6,-.1)(6,1.7)
\uput{.2}[180](-0,.75){$E_2 = H_d H_{\delta} =$}
\rput(1,-0.4){$0$}
\rput(3,-0.4){$1$}
\rput(5,-0.4){$2$}
\rput(1,0.25){$H^0(X, \calf)$}
\rput(3,0.25){0}
\rput(5,0.25){0}
\rput(1,.75){$H^1(X, \calf)$}
\rput(3,.75){$0$}
\rput(5,.75){$0$}
\rput(1,1.25){$H^2(X, \calf)$}
\rput(3,1.25){$0$}
\rput(5,1.25){$0$}
\uput{.2}[270](7,-.1){$p$}
\uput{.2}[180](0,1.7){$q$}
\rput(7.3,-.1){.}
\end{pspicture}
\end{center}
So the second spectral sequence of the double complex \eqref{e:cg} degenerates at the $E_2$ term and the cohomology of the associated single
complex $K\bu$ of $\bigoplus K^{p,q}$ is
\[
H_D^k(K\bu) \simeq H^k(X, \calf).
\]

In the augmented complex \eqref{e:cg},
by the construction of Godement's canonical resolution, 
the
\v{C}ech complex $\check{C}\bu(\fraku,\calf)$ injects into the complex $K\bu$
via a cochain map
\[
\epsilon\colon \check{C}^k(\fraku,\calf) \to K^{k,0} \hookrightarrow K^k,
\]
which gives rise to an induced map
\begin{equation} \label{e:induced}
\epsilon^*\colon \check{H}^k(\fraku,\calf) \to H_D^k(K\bu) = H^k(X,\calf)
\end{equation}
in cohomology.

\begin{defi}
A sheaf $\calf$ of abelian groups on a topological space $X$
is \term{acyclic on an open cover}\index{acyclic sheaf!on an open cover}
$\fraku = \{ U_{\ga} \}$ of $X$ if the cohomology
\[
H^k(U_{\ga_0 \cdots \ga_p}, \calf) =0
\]
for all $k > 0$ and all finite intersections
$U_{\ga_0 \cdots \ga_p}$ of open sets in $\fraku$.
\end{defi}

\begin{theorem} \label{t:csisom}
If a sheaf $\calf$ of abelian groups is acyclic on an open cover $\fraku = \{ U_{\ga}\}$
of a topological space $X$, then the induced map $\epsilon^*\colon
\check{H}^k(\fraku, \calf) \to H^k(X,\calf)$ is an isomorphism.\index{comparison!of \v{C}ech cohomology and
sheaf\\ cohomology}
\end{theorem}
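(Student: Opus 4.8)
The double complex $K^{p,q} = \check C^p(\fraku, \calc^q\calf)$ has already been analyzed through its second spectral sequence (the one beginning with the horizontal \v{C}ech differential $\delta$), which, using that the Godement sheaves $\calc^q\calf$ are flasque, identified $H_D^k(K\bu)$ with $H^k(X,\calf)$ and thereby produced the map $\epsilon^*$ of \eqref{e:induced}. The plan is to extract a second computation of the \emph{same} total cohomology $H_D^k(K\bu)$ from the first (``usual'') spectral sequence, whose initial differential is the vertical Godement differential $d$, and to see that under the acyclicity hypothesis this computation returns $\check H^k(\fraku,\calf)$ compatibly with $\epsilon$.

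First I would compute the $E_1$ page of the first spectral sequence. The $p$th column of $K$ is the direct product
\[
K^{p,\bullet} = \check C^p(\fraku, \calc\bu\calf) = \prod_{\ga_0 < \cdots < \ga_p} \calc\bu\calf(U_{\ga_0 \cdots \ga_p})
\]
of the complexes of sections of the Godement resolution over the various finite intersections. Because the direct product of abelian groups is an exact functor it commutes with cohomology, so, using the definition of sheaf cohomology and Proposition~\ref{2p:sections},
\[
E_1^{p,q} = H_d^{p,q} = \prod_{\ga_0 < \cdots < \ga_p} h^q\big(\calc\bu\calf(U_{\ga_0 \cdots \ga_p})\big) = \prod_{\ga_0 < \cdots < \ga_p} H^q(U_{\ga_0 \cdots \ga_p}, \calf).
\]
This is exactly where the hypothesis is used: acyclicity on $\fraku$ kills every term with $q > 0$, leaving $E_1$ concentrated in the bottom row, where $E_1^{p,0} = \prod_{\ga_0 < \cdots < \ga_p} \calf(U_{\ga_0 \cdots \ga_p}) = \check C^p(\fraku, \calf)$ and the surviving horizontal differential is the \v{C}ech coboundary $\delta$. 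Hence $E_2^{p,0} = \check H^p(\fraku, \calf)$ while $E_2^{p,q} = 0$ for $q > 0$; the spectral sequence degenerates at $E_2$, giving $H_D^k(K\bu) \simeq \check H^k(\fraku, \calf)$. Combined with the earlier identification $H_D^k(K\bu) \simeq H^k(X,\calf)$, this already yields an abstract isomorphism between the two cohomologies.

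The step that demands care is confirming that this isomorphism is precisely $\epsilon^*$ and not merely some isomorphism. The cleanest way to see this is to read the vanishing $E_1^{p,q} = 0$ for $q>0$ (together with $E_1^{p,0} = \check C^p(\fraku,\calf)$) as the statement that each augmented column
\[
0 \to \check C^p(\fraku, \calf) \xrightarrow{\ \epsilon\ } \check C^p(\fraku, \calc^0\calf) \to \check C^p(\fraku, \calc^1\calf) \to \cdots
\]
is exact. Exactness of every augmented column of a first-quadrant double complex forces the augmentation $\epsilon\colon \check C\bu(\fraku, \calf) \to K\bu$ into the total complex to be a quasi-isomorphism: the double complex obtained by appending $\check C\bu(\fraku,\calf)$ as the row $q = -1$ has exact columns, so the first spectral sequence of \emph{that} complex has vanishing $E_1$ and its total complex is acyclic, which is to say $\epsilon$ induces an isomorphism on cohomology. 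Therefore $\epsilon^*\colon \check H^k(\fraku, \calf) \to H_D^k(K\bu) = H^k(X,\calf)$ is an isomorphism, which is the assertion of the theorem.
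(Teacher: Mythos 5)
Your proposal is correct and takes essentially the same route as the paper's proof: compute the first (usual) spectral sequence of the \v{C}ech--Godement double complex $K^{p,q} = \check{C}^p(\fraku, \calc^q\calf)$, observe that acyclicity on $\fraku$ concentrates $E_1$ in the bottom row as the \v{C}ech complex, so the sequence degenerates at $E_2 = \check{H}^*(\fraku,\calf)$ and combines with the prior identification $H_D^k(K\bu) \simeq H^k(X,\calf)$. Your closing paragraph, which verifies that the resulting isomorphism really is $\epsilon^*$ by noting that the exact augmented columns make the augmentation $\epsilon\colon \check{C}\bu(\fraku,\calf) \to K\bu$ a quasi-isomorphism, is a careful justification of a point the paper asserts without comment.
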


\pf
Because $\calf$ is acyclic on each intersection
$U_{\ga_0 \cdots \ga_p}$, the cohomology of the $p$th column of \eqref{e:cg}
is $\prod H^0(U_{\ga_0 \cdots \ga_p}, \calf) =
\prod \calf(U_{\ga_0 \cdots \ga_p})$, so that the $E_1$ term
of the usual spectral sequence is
\begin{center}
\begin{pspicture}(-1,-.5)(9,2)
\psline{->}(0,-.1)(9,-.1)
\psline{->}(0,-.1)(0,1.7)
\psline(2,-.1)(2,1.7)
\psline(5,-.1)(5,1.7)
\psline(8,-.1)(8,1.7)
\uput{.2}[180](-0,.75){$E_1= H_d =$}
\rput(1,-0.4){$0$}
\rput(3.5,-0.4){$1$}
\rput(6.5,-0.4){$2$}
\rput(1,0.25){$\prod \calf(U_{\ga_0})$}
\rput(3.5,0.25){$\prod \calf(U_{\ga_0 \ga_1})$}
\rput(6.5,0.25){$\prod \calf(U_{\ga_0 \ga_1\ga_2})$}
\rput(1,.75){$0$}
\rput(3.5,.75){$0$}
\rput(6.5,.75){$0$}
\rput(1,1.25){$0$}
\rput(3.5,1.25){$0$}
\rput(6.5,1.25){$0$}
\uput{.2}[270](9,-.1){$p$}
\uput{.2}[180](0,1.7){$q$}
\rput(9.2,-.1){,}
\end{pspicture}
\end{center}
and the $E_2$ term is
\begin{center}
\begin{pspicture}(-1,-.8)(9,2)
\psline{->}(0,-.1)(9,-.1)
\psline{->}(0,-.1)(0,1.7)
\psline(2,-.1)(2,1.7)
\psline(5,-.1)(5,1.7)
\psline(8,-.1)(8,1.7)
\uput{.2}[180](-0,.75){$E_2= H_{\delta}H_d =$}
\rput(1,-0.4){$0$}
\rput(3.5,-0.4){$1$}
\rput(6.5,-0.4){$2$}
\rput(1,0.25){$\check{H}^0(\fraku, \calf)$}
\rput(3.5,0.25){$\check{H}^1(\fraku, \calf)$}
\rput(6.5,0.25){$\check{H}^2(\fraku, \calf)$}
\rput(1,.75){$0$}
\rput(3.5,.75){$0$}
\rput(6.5,.75){$0$}
\rput(1,1.25){$0$}
\rput(3.5,1.25){$0$}
\rput(6.5,1.25){$0$}
\uput{.2}[270](9,-.1){$p$}
\uput{.2}[180](0,1.7){$q$}
\rput(9.2,-.1){.}
\end{pspicture}
\end{center}
Hence, the spectral sequence degenerates at the $E_2$ term and there is an
isomorphism
\[
\epsilon^*\colon \check{H}^k(\fraku,\calf) \simeq H_D^k(K\bu) \simeq
H^k(X, \calf). 
\]
\epf

\begin{remark}
Although we used a spectral sequence argument to prove
Theorem~\ref{t:csisom}, in the proof there is no problem with
the extension of groups in the $E_{\infty}$ term,
since along each antidiagonal $\bigoplus_{p+q =k} E_{\infty}^{p,q}$
there is only one nonzero box.
For this reason, Theorem~\ref{t:csisom} holds for
sheaves of abelian groups, not just for sheaves of
vector spaces.
\end{remark}

\section{\v{C}ech Cohomology of a Complex of Sheaves}

Just as the cohomology of a sheaf can be computed using a \v{C}ech complex on
an open cover (Theorem~\ref{t:csisom}), the hypercohomology of a complex of sheaves can
also be computed using the \v{C}ech method.

Let $(\call\bu,d_{\call})$ be a complex of sheaves on a topological space $X$,
and $\fraku= \{ U_{\ga} \}$ an open cover of $X$.
To define the \v{C}ech cohomology of $\call\bu$ on $\fraku$,
let $K = \bigoplus K^{p,q}$ be the double complex
\[
K^{p,q} = \check{C}^p(\fraku, \call^q)
\]
with its two commuting differentials $\delta$ and $d_{\call}$.
We will call $K$ the \term{\v{C}ech--sheaf double complex}.\index{Cech-sheaf double complex@\v{C}ech-sheaf
double complex}
The \term{\v{C}ech cohomology} $\check{H}^*(\fraku,\call\bu)$
of $\call\bu$ is defined to be the cohomology of the single complex
\[
K\bu = \bigoplus K^k, \text{ where } K^k = \bigoplus_{p+q=k}
\check{C}^p(\fraku, \call^q) \text{ and } d_K=\delta+(-1)^p d_{\call},
\]
associated to the \v{C}ech--sheaf double complex.

\subsection{The Relation Between \v{C}ech Cohomology and Hypercohomology}

There is an analogue of Theorem~\ref{t:csisom} that allows us to
compute hypercohomology using an open cover.

\begin{theorem} \label{t:chisom}
If $\call\bu$ is a complex of sheaves of abelian groups
 on a topological space $X$ such that each sheaf $\call^q$ is
acyclic on the open cover $\fraku = \{ U_{\ga} \}$ of $X$, then there
is an isomorphism $\check{H}^k(\fraku, \call\bu) \simeq
\hh^k(X, \call\bu)$ between the \v{C}ech cohomology of $\call\bu$ on
the open cover $\fraku$ and the hypercohomology of $\call\bu$ on $X$.\index{comparison!of \v{C}ech cohomology and hypercohomology}
\end{theorem}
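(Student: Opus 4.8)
The plan is to imitate the proof of Theorem~\ref{t:csisom} one level higher, by forming a triple complex that simultaneously carries the \v{C}ech index, the Godement index, and the complex-of-sheaves index, and then extracting its total cohomology in two different ways. Concretely, I would set up
\[
M^{p,r,q} = \check{C}^p(\fraku, \calc^r\call^q),
\]
with three commuting differentials: the \v{C}ech coboundary $\delta$ (raising $p$), the Godement differential (raising $r$), and $d_{\call}$ (raising $q$), assembled with the usual signs into a total differential on the associated single complex $T\bu$. Since all three indices are nonnegative, $T\bu$ is a first-octant complex, so each total degree involves only finitely many summands; this guarantees convergence of the spectral sequences below and removes any extension ambiguity whenever a page is concentrated along a single line.

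First I would compute $H\bu(T\bu)$ by collapsing the \v{C}ech index. Bundling $(r,q)$ into a single degree $s=r+q$ presents $T\bu$ as the total complex of the double complex $\check{C}^p(\fraku, \calg^s)$, where $\calg^s=\bigoplus_{r+q=s}\calc^r\call^q$ is a \emph{finite} direct sum of Godement sheaves and hence flasque. By Lemma~\ref{l:flasque-cech}, the \v{C}ech cohomology of a flasque sheaf is concentrated in degree $0$, where it equals $\Gamma(X,\calg^s)=\bigoplus_{r+q=s}\Gamma(X,\calc^r\call^q)$. Thus the spectral sequence that takes $\delta$-cohomology first has $E_1$ supported in the single column $p=0$; it degenerates, and $H\bu(T\bu)$ is the total cohomology of the double complex $\bigoplus_{r,q}\Gamma(X,\calc^r\call^q)$, which is by definition $\hh\bu(X,\call\bu)$.

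Next I would recompute $H\bu(T\bu)$ by collapsing the Godement index instead --- this is the only step using the hypothesis. For fixed $p$ and $q$, the $r$-complex is $\prod_{\ga_0<\cdots<\ga_p}\calc\bu\call^q(U_{\ga_0\cdots\ga_p})$, a product of the complexes of sections of the Godement resolution of $\call^q$ over the $(p+1)$-fold intersections. Because products are exact in the category of abelian groups, cohomology commutes with the product, giving $\prod_{\ga_0<\cdots<\ga_p} H^r(U_{\ga_0\cdots\ga_p},\call^q)$. Since each $\call^q$ is acyclic on $\fraku$, this vanishes for $r>0$ and equals $\check{C}^p(\fraku,\call^q)$ for $r=0$. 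Hence this spectral sequence is concentrated in the line $r=0$, degenerates there, and $H\bu(T\bu)$ is the total cohomology of the \v{C}ech--sheaf double complex $\check{C}^p(\fraku,\call^q)$, namely $\check{H}\bu(\fraku,\call\bu)$. Comparing the two computations yields the desired isomorphism $\check{H}^k(\fraku,\call\bu)\simeq\hh^k(X,\call\bu)$.

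I expect the main obstacle to be purely organizational rather than conceptual: arranging the triple complex so that each of the two collapses reduces cleanly to a tool already available --- flasqueness of Godement sheaves through Lemma~\ref{l:flasque-cech} in one direction, and acyclicity on the cover in the other --- and verifying that each degeneration is genuine, i.e.\ that the relevant page really is concentrated in a single column or row so that no higher differentials act and no group extensions intervene. The first-octant property of $T\bu$ is exactly what makes both degenerations legitimate.
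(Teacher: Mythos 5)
Your proposal is correct and is essentially the paper's own proof: the paper forms the same \v{C}ech--Godement--sheaf triple complex $N^{p,q,r}=\check{C}^p(\fraku,\calc^r\call^q)$ and computes its total cohomology in the same two ways, using acyclicity on the cover to collapse the Godement direction onto the \v{C}ech--sheaf double complex, and flasqueness of (sums of) Godement sheaves via Lemma~\ref{l:flasque-cech} to collapse the \v{C}ech direction onto the hypercohomology double complex, with the same observation that single-line degeneration removes extension issues. The only cosmetic difference is in how you bundle the indices into double complexes, a point the paper formalizes in Proposition~\ref{8p:triple}.
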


The \v{C}ech cohomology of the complex $\call^{\bullet}$ is the cohomology
of the associated single complex of the double complex
$\bigoplus_{p,q} \check{C}^p(\fraku,\call^q) = \bigoplus_{p,q}
\prod_{\ga} \call^q(U_{\ga_0 \cdots \ga_p})$,
where $\ga = (\ga_0 < \cdots < \ga_p)$.
The hypercohomology of the complex $\call^{\bullet}$ is the cohomology
of the associated single complex of the double complex
$\bigoplus_{q,r} \calc^r \call^q(X)$.
To compare the two, we form the triple complex with terms
\[
N^{p,q,r} =  \check{C}^p(\fraku, \calc^r\call^q)
\]
and three commuting differentials: the \v{C}ech differential $\delta_{\check{C}}$,
the differential $d_{\call}$ of the complex $\call\bu$, and
the Godement differential $d_{\calc}$.

Let $N^{\bullet,\bullet,\bullet}$ be any triple complex\index{triple complex} with three
commuting differentials $d_1$, $d_2$, and $d_3$ of degrees
$(1,0,0)$, $(0,1,0)$, and $(0,0,1)$, respectively.
Summing $N^{p,q,r}$ over $p$ and $q$, or over $q$ and $r$,
one can form two double complexes from $N^{\bullet,\bullet,\bullet}$:
\[
N^{k,r} = \bigoplus_{p+q=k} N^{p,q,r}
\]
with differentials
\[
\delta = d_1 + (-1)^p d_2, \quad d = d_3,
\]
and
\[
N'^{p,\ell} = \bigoplus_{q+r = \ell} N^{p,q,r}
\]
with differentials
\[
\delta' = d_1, \quad d' = d_2 + (-1)^q d_3.
\]

\begin{prop} \label{8p:triple}
For any triple complex $N^{\bullet,\bullet,\bullet}$,
the two associated double complexes $N^{\bullet, \bullet}$
and ${N'}^{\bullet,\bullet}$ have the same associated
single complex.
\end{prop}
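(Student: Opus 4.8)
The plan is to show that the two associated single complexes coincide \emph{on the nose}: they have the same graded piece in each total degree, and literally the same total differential. First I would check the underlying graded groups. By definition the total-degree-$n$ piece of the single complex of $N^{\bullet,\bullet}$ is
\[
\bigoplus_{k+r=n} N^{k,r} = \bigoplus_{k+r=n}\bigoplus_{p+q=k} N^{p,q,r} = \bigoplus_{p+q+r=n} N^{p,q,r},
\]
while the total-degree-$n$ piece of the single complex of ${N'}^{\bullet,\bullet}$ is
\[
\bigoplus_{p+\ell=n} {N'}^{p,\ell} = \bigoplus_{p+\ell=n}\bigoplus_{q+r=\ell} N^{p,q,r} = \bigoplus_{p+q+r=n} N^{p,q,r}.
\]
So both single complexes have the same degree-$n$ component, namely the sum of all $N^{p,q,r}$ with $p+q+r=n$.

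Next I would compare the two total differentials, using the Koszul sign convention $D = (\text{horizontal}) + (-1)^{(\text{horizontal degree})}(\text{vertical})$ fixed in Section~\ref{ch2:sec4}. For the double complex $N^{\bullet,\bullet}$ the horizontal direction is indexed by $k$ and the vertical one by $r$, so its total differential restricted to $N^{p,q,r}$ (where $k=p+q$) is $\delta + (-1)^k d = d_1 + (-1)^p d_2 + (-1)^{p+q} d_3$. For the double complex ${N'}^{\bullet,\bullet}$ the horizontal direction is indexed by $p$ and the vertical one by $\ell$, so its total differential restricted to $N^{p,q,r}$ is $\delta' + (-1)^p d' = d_1 + (-1)^p\bigl(d_2 + (-1)^q d_3\bigr) = d_1 + (-1)^p d_2 + (-1)^{p+q} d_3$. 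The two expressions are identical, so the total differentials agree on every summand, and hence the two single complexes are equal as cochain complexes.

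The entire content is in matching the Koszul signs, and this is where care is needed: the point is precisely that the sign $(-1)^p$ built into $\delta = d_1 + (-1)^p d_2$ and the sign $(-1)^q$ built into $d' = d_2 + (-1)^q d_3$ are arranged so that, after the outer $(-1)^k$ (respectively $(-1)^p$) is distributed, both differentials collapse to the same symmetric expression $d_1 + (-1)^p d_2 + (-1)^{p+q} d_3$. The bookkeeping identity $k=p+q$, which makes $(-1)^k=(-1)^{p+q}$ match the product $(-1)^p(-1)^q$ coming from the second complex, is the one spot an error could creep in. I would not need to re-verify $D^2=0$ or that $\delta,\delta',d,d'$ square to zero, since $N^{\bullet,\bullet}$ and ${N'}^{\bullet,\bullet}$ are given as genuine double complexes (the commutativity of $d_1,d_2,d_3$ guarantees this), and two complexes that are equal automatically have the same cohomology.
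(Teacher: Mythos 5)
Your proof is correct and follows essentially the same route as the paper's: identify both total-degree-$n$ pieces with $\bigoplus_{p+q+r=n} N^{p,q,r}$, then check that both total differentials collapse to $d_1 + (-1)^p d_2 + (-1)^{p+q} d_3$ under the stated sign conventions. The only difference is expository—your remarks on where the sign bookkeeping could fail and on not needing to re-verify $D^2=0$—not mathematical.
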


\pf
Clearly, the groups
\[
N^n = \bigoplus_{k+r=n} N^{k,r} = \bigoplus_{p+q+r=n} N^{p,q,r}
\]
and
\[
N'^n = \bigoplus_{p+\ell=n} {N'}^{p,\ell} = \bigoplus_{p+q+r=n} N^{p,q,r}
\]
are equal.
The differential $D$ for $N\bu = \bigoplus_n  N^{n}$ is
\[
D = \delta + (-1)^k d = d_1 + (-1)^p d_2 + (-1)^{p+q} d_3.
\]
The differential $D'$ for ${N'}\bu = \bigoplus_n  {N'}^{n}$ is
\[
D' = \delta' + (-1)^p d' = d_1 + (-1)^p \big( d_2 + (-1)^{q} d_3\big) = D.
\]
\epf

Thus, any triple complex $N^{\bullet,\bullet,\bullet}$ has an
associated single complex $N^{\bullet}$ whose cohomology can
be computed in two ways, either from the double complex $(N^{\bullet,\bullet}, D)$
or from the double complex $(N'^{\bullet,\bullet}, D')$.

We now apply this observation to the \v{C}ech--Godement--sheaf triple complex
$$N^{\bullet,\bullet,\bullet} = \bigoplus \check{C}^p(\fraku, \calc^r\call^q)$$ of
the complex $\call\bu$ of sheaves.
The $k$th column of the double complex $N^{\bullet,\bullet} = \bigoplus N^{k,r}$
is
\[
\begin{xy}
(0,30)*+{\bigoplus_{p+q=k}\prod_{\ga_0 < \cdots < \ga_p} \calc^{r+1} \call^q (U_{\ga_0 \cdots \ga_p})}="c";
(0,20)*+{\bigoplus_{p+q=k}\prod_{\ga_0 < \cdots < \ga_p} \calc^r \call^q (U_{\ga_0 \cdots \ga_p})}="b";
(0,-2)*+{\bigoplus_{p+q=k}\prod_{\ga_0 < \cdots < \ga_p} \calc^0 \call^q (U_{\ga_0 \cdots \ga_p}),}="a";
(15,10)*+{\vdots};
{\ar (15,20)*+++{}; (15,30)*+++{}}
{\ar (15,10)*+++{}; (15,20)*+++{}}
{\ar (14,-2)*+++{}; (14,8)*+++{}}
\end{xy}
\]
where the vertical differential $d$ is the Godement differential $d_{\calc}$.
Since $\call\bu$ is acyclic on the open cover $\fraku = \{ U_{\ga} \}$,
this column is exact except in the zeroth row, and the zeroth row of the
cohomology $H_d$ is
\[
\bigoplus_{p+q=k} \prod_{\ga_0 < \cdots < \ga_p} \call^q(U_{\ga_0 \cdots \ga_p})
=
\bigoplus_{p+q=k} \check{C}^p(\fraku, \call^q) = \bigoplus_{p+q=k} K^{p,q}
= K^k,
\]
the associated single complex of the \v{C}ech--sheaf double complex.
Thus, the $E_1$ term of the first spectral sequence of $N^{\bullet,\bullet}$ is
\begin{center}
\begin{pspicture}(-1,-.5)(7,2)
\psline{->}(0,-.1)(7,-.1)
\psline{->}(0,-.1)(0,1.7)
\psline(2,-.1)(2,1.7)
\psline(4,-.1)(4,1.7)
\psline(6,-.1)(6,1.7)
\uput{.2}[180](-0,.75){$E_1 = H_d =$}
\rput(1,-0.4){$0$}
\rput(3,-0.4){$1$}
\rput(5,-0.4){$2$}
\rput(1,0.25){$K^0$}
\rput(3,0.25){$K^1$}
\rput(5,0.25){$K^2$}
\rput(1,.75){$0$}
\rput(3,.75){$0$}
\rput(5,.75){$0$}
\rput(1,1.25){$0$}
\rput(3,1.25){$0$}
\rput(5,1.25){$0$}
\uput{.2}[270](7,-.1){$k$}
\uput{.2}[180](0,1.7){$r$}
\rput(7.2,-.1){,}
\end{pspicture}
\end{center}
and so the $E_2$ term is
\[
E_2 = H_{\delta}(H_d) = H_{d_K}^*(K\bu) =
\check{H}^*(\fraku,\call\bu).
\]
Although we are working with abelian groups, there are no extension issues,
because each antidiagonal in $E_{\infty}$ contains only one nonzero group.
Thus, the $E_{\infty}$ term is
\begin{equation} \label{8e:isom2}
H_D^*(N\bu) \simeq E_2 = \check{H}^*(\fraku, \call\bu).
\end{equation}

On the other hand, the $\ell$th row of ${N'}^{\bullet,\bullet}$ is
\[
0 \to \bigoplus_{q+r=\ell} \check{C}^0(\fraku,\calc^r\call^q) \to \cdots \to
\bigoplus_{q+r=\ell} \check{C}^p(\fraku,\calc^r\call^q) \to
\bigoplus_{q+r=\ell} \check{C}^{p+1}(\fraku,\calc^r\call^q) \to \cdots ,
\]
which is the \v{C}ech cochain complex of the flasque sheaf
$\bigoplus_{q+r=\ell} \calc^r\call^q$ with differential $\delta'= \delta_{\check{\calc}}$.
Thus, each row of ${N'}^{\bullet,\bullet}$ is exact except in the
zeroth column, and the kernel of ${N'}^{0,\ell} \to {N'}^{1,\ell}$
is $M^{\ell} = \bigoplus_{q+r=\ell}\calc^r\call^q(X)$.
Hence, the $E_1$ term of the second spectral sequence is
\begin{center}
\begin{pspicture}(-1,-.5)(5,2)
\psline{->}(-.4,0)(5,0)
\psline{->}(-.4,0)(-.4,1.7)
\psline(1,0)(1,1.7)
\psline(2,0)(2,1.7)
\psline(3,0)(3,1.7)
\psline(4,0)(4,1.7)
\uput{.2}[180](-.4,.75){$E_1= H_{\delta'} =$}
\rput(0.3,-0.3){$0$}
\rput(1.5,-0.3){$1$}
\rput(2.5,-0.3){$2$}
\rput(3.5,-0.3){$3$}
\rput(0.3,0.25){$M^0$}
\rput(1.5,0.25){$0$}
\rput(2.5,0.25){$0$}
\rput(3.5,0.25){$0$}
\rput(0.3,.75){$M^1$}
\rput(1.5,.75){$0$}
\rput(2.5,.75){$0$}
\rput(3.5,.75){$0$}
\rput(0.3,1.25){$M^2$}
\rput(1.5,1.25){$0$}
\rput(2.5,1.25){$0$}
\rput(3.5,1.25){$0$}
\uput{.2}[270](5,0){$p$}
\uput{.2}[180](-.4,1.7){$\ell$}
\rput(5.2,0){.}
\end{pspicture}
\end{center}
The $E_2$ term is
\[
E_2 = H_{d'}(H_{\delta'}) = H_{d_M}^*(M\bu) = \hh^*(X,\call\bu).
\]
Since this spectral sequence for $N^{\bullet,\bullet}$ degenerates at
the $E_2$ term and converges to $H_{D'}^*({N'}\bu)$, there is an isomorphism
\begin{equation} \label{8e:isom1}
E_{\infty} = H_{D'}^*({N'}\bu) \simeq E_2  = \hh^*(X, \call\bu).
\end{equation}

By Proposition~\ref{8p:triple}, the two groups in \eqref{8e:isom2} and \eqref{8e:isom1}
are isomorphic.  In this way, one obtains an isomorphism
between the \v{C}ech cohomology and the hypercohomology of the complex $\call\bu$:
\[
\check{H}^*(\fraku,\call\bu) \simeq \hh^*(X,\call\bu). 
\]

\section{Reduction to the Affine Case}

Grothendieck proved his general algebraic de Rham theorem by reducing
it to the special case of an affine variety.
This section is an exposition of his ideas in \cite{grothendieck66}.

\begin{theorem}[Algebraic de Rham theorem]\label{t:algebraic}\index{algebraic de Rham theorem}
Let $X$ be a smooth algebraic variety defined over the complex
numbers,
and $X\suban$ its underlying complex manifold.
Then the singular cohomology of $X\suban$ with $\C$ coefficients can be
computed as the hypercohomology of the complex
$\Omega\alg\bu$ of sheaves of algebraic differential forms on $X$
with its Zariski topology:
\[
H^k(X\suban, \C) \simeq \hh^k(X, \Omega\alg\bu).
\]
\end{theorem}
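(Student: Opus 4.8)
The plan is to follow Grothendieck's strategy and reduce the theorem to the affine case, which I take as the key input (it is established in the next section): for a \emph{smooth affine} variety $U$, analytification induces an isomorphism $h^q\big(\Omega\alg\bu(U)\big) \overset{\sim}{\to} H^q(U\suban, \C)$ between the cohomology of the complex of global algebraic forms and the singular cohomology of the associated Stein manifold. Granting this, the reduction is carried out by computing both sides through the \v{C}ech cohomology of a single affine cover and then comparing term by term.

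First I would choose a finite affine open cover $\fraku = \{U_\alpha\}$ of $X$; such a cover exists because $X$ is of finite type. Since a variety is separated, every finite intersection $U_{\alpha_0\cdots\alpha_p} = U_{\alpha_0}\cap\cdots\cap U_{\alpha_p}$ is again affine, and its analytification $(U_{\alpha_0\cdots\alpha_p})\suban$ is a closed submanifold of some $\C^N$ cut out by finitely many polynomials, hence a Stein manifold. On the algebraic side each $\Omega\alg^q$ is a coherent algebraic sheaf, so by Serre's vanishing theorem (Theorem~\ref{t:affinity}) it is acyclic on every member of $\fraku$, i.e. acyclic on the cover; Theorem~\ref{t:chisom} then gives $\check{H}^k(\fraku, \Omega\alg\bu) \simeq \hh^k(X, \Omega\alg\bu)$. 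On the analytic side each $\Omega\suban^q$ is a coherent analytic sheaf, so Cartan's Theorem~B makes it acyclic on every (Stein) intersection, and Theorem~\ref{t:chisom} applied to the analytic cover $\fraku\suban = \{(U_\alpha)\suban\}$ gives $\check{H}^k(\fraku\suban, \Omega\suban\bu) \simeq \hh^k(\Xand, \Omega\suban\bu)$, which by the analytic de Rham theorem (Theorem~\ref{t:analytic}) equals $H^k(\Xand, \C)$.

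It remains to compare the two \v{C}ech cohomologies. Analytification defines a morphism of the \v{C}ech--sheaf double complexes $\check{C}^p(\fraku, \Omega\alg^q) \to \check{C}^p(\fraku\suban, \Omega\suban^q)$, compatible with both the \v{C}ech differential and the exterior derivative. I would run the spectral sequence that takes the de Rham (exterior-derivative) differential first. Its $E_1$ page is $E_1^{p,q} = \prod_{\alpha_0<\cdots<\alpha_p} h^q\big(\Omega\bu(U_{\alpha_0\cdots\alpha_p})\big)$. On the analytic side Corollary~\ref{c:analytic} (de Rham for Stein manifolds) identifies each factor with $H^q\big((U_{\alpha_0\cdots\alpha_p})\suban, \C\big)$, while on the algebraic side the affine case identifies each factor with the same group, the comparison map being exactly the composite that realizes this identification. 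Hence the comparison map is an isomorphism on $E_1$, so, precisely as in the proof of Theorem~\ref{t:qi} (a filtration-and-five-lemma argument, valid because the cover is finite and the double complexes are first-quadrant), it is an isomorphism on total cohomology: $\check{H}^k(\fraku, \Omega\alg\bu) \simeq \check{H}^k(\fraku\suban, \Omega\suban\bu)$.

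Stringing the isomorphisms together yields $\hh^k(X,\Omega\alg\bu) \simeq \check{H}^k(\fraku,\Omega\alg\bu) \simeq \check{H}^k(\fraku\suban,\Omega\suban\bu) \simeq \hh^k(\Xand,\Omega\suban\bu) \simeq H^k(\Xand,\C)$, which is the theorem. The genuinely hard step is the affine case feeding into the $E_1$ comparison: everything else is the formal machinery relating \v{C}ech cohomology, hypercohomology, and analytification, whereas the affine statement requires real input (comparing algebraic with holomorphic de Rham cohomology on an affine variety that is far from compact, where GAGA is unavailable), and this is where the main work of the next section lies.
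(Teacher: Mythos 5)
Your proposal is correct and follows essentially the same route as the paper's own proof: reduction to the affine case, an affine open cover whose intersections are affine (hence Stein after analytification), acyclicity via Serre vanishing and Cartan's Theorem~B feeding into Theorem~\ref{t:chisom}, and a comparison of the two \v{C}ech--sheaf double complexes through the spectral sequence whose $E_1$ terms are identified with singular cohomology by Corollary~\ref{c:affine} and Corollary~\ref{c:analytic}. Your explicit appeal to the filtration-and-five-lemma argument of Theorem~\ref{t:qi} to pass from $E_1$ to total cohomology just makes precise what the paper leaves implicit in asserting the induced isomorphism on $E_\infty$.
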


By the isomorphism $H^k(X\suban,\C) \simeq \hh^k(X\suban,
\Omega\suban\bu)$ of the analytic de Rham theorem,
Grothendieck's algebraic de Rham theorem is equivalent to an
isomorphism in hypercohomology
\[
\hh^k(X, \Omega\alg\bu) \simeq \hh^k(X\suban, \Omega\suban\bu).
\]
The special case of Grothendieck's theorem for an affine variety is
especially interesting, since it does not involve hypercohomology.

\begin{cor}[The affine case]\label{c:affine}\index{algebraic de Rham theorem!the affine case}
Let $X$ be a smooth affine variety defined over the complex numbers
and $\big(\Omega\alg\bu(X)$, $d\big)$ the complex of algebraic differential
forms on $X$.
Then the singular cohomology with $\C$ coefficients of its underlying
complex manifold $X\suban$ can be computed as the cohomology
of its complex of
algebraic differential forms:
\[
H^k(X\suban, \C) \simeq h^k\big(\Omega\alg\bu(X)\big).
\]
\end{cor}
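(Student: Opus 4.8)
The plan is to derive the corollary by specializing the general algebraic de Rham theorem (Theorem~\ref{t:algebraic}) to the affine setting, where the hypercohomology collapses onto the cohomology of the global-sections complex. Since Theorem~\ref{t:algebraic} already supplies the isomorphism $H^k(X\suban, \C) \simeq \hh^k(X, \Omega\alg\bu)$, the entire task reduces to establishing
\[
\hh^k(X, \Omega\alg\bu) \simeq h^k\big(\Omega\alg\bu(X)\big)
\]
when $X$ is affine.

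First I would verify that each sheaf $\Omega\alg^q$ of algebraic $q$-forms is coherent. On a smooth $n$-dimensional variety it is locally free of rank $\binom{n}{q}$, and since the structure sheaf $\calo_X$ is itself coherent (Example~\ref{exam:serre}), any locally free $\calo_X$-module is locally finitely presented and hence coherent. Granting this, the affineness of $X$ lets me invoke Serre's vanishing theorem (Theorem~\ref{t:affinity}) termwise, yielding $H^p(X, \Omega\alg^q) = 0$ for every $p \ge 1$. Thus $\Omega\alg\bu$ is a complex of acyclic sheaves on $X$.

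At that point Theorem~\ref{t:acyclic} applies directly and gives $\hh^k(X, \Omega\alg\bu) \simeq h^k\big(\Omega\alg\bu(X)\big)$; concatenating this with the isomorphism of Theorem~\ref{t:algebraic} completes the argument. There is no serious obstacle once the general theorem is in hand: the only genuine content of the affine case is Serre's affine vanishing, which forces the second spectral sequence of the hypercohomology double complex to have a single nonzero column ($p=0$) and hence to degenerate at $E_2$. It is worth flagging, however, that in the overall logical architecture of the article the general theorem is itself reduced to the affine case, so to avoid circularity one would instead prove this corollary first---directly from the analytic comparison together with Theorem~\ref{t:acyclic}---and only afterward bootstrap upward to Theorem~\ref{t:algebraic}.
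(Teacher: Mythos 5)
Your main argument (Serre vanishing for the coherent, locally free sheaves $\Omega\alg^q$ on the affine $X$, then Theorem~\ref{t:acyclic} to collapse $\hh^k(X,\Omega\alg\bu)$ to $h^k\big(\Omega\alg\bu(X)\big)$, then Theorem~\ref{t:algebraic}) is executed correctly and reproduces, almost verbatim, the paper's subsection ``Proof that the General Case Implies the Affine Case.'' But inside this paper that derivation is not a proof of the corollary: Theorem~\ref{t:algebraic} is itself proved \emph{only} by reduction to Corollary~\ref{c:affine} (the subsection ``Proof that the Affine Case Implies the General Case'' together with Section~10), so citing it here is circular. You do flag the circularity yourself, which is the right instinct, but your proposed repair is where the genuine gap lies: you claim one could prove the corollary ``directly from the analytic comparison together with Theorem~\ref{t:acyclic}.'' That is not true. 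Since $X\suban$ is Stein, Corollary~\ref{c:analytic} does give $H^k(X\suban,\C) \simeq h^k\big(\Omega\suban\bu(X\suban)\big)$, but you are then left with the comparison
\[
h^k\big(\Omega\suban\bu(X\suban)\big) \simeq h^k\big(\Omega\alg\bu(X)\big),
\]
and there is no GAGA principle for affine varieties or Stein manifolds --- Serre's GAGA \eqref{e:gaga} requires projectivity. This comparison between global holomorphic and global algebraic forms on a noncompact space is precisely the hard content of the corollary, not a formality.

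The paper's actual (non-circular) proof of Corollary~\ref{c:affine} occupies all of Section~10 and is the heart of the article: one compactifies $X = Y - D$ with $Y$ smooth projective and $D$ a normal crossing divisor (projective closure plus Hironaka's resolution of singularities); one proves the Hodge--Atiyah fundamental lemma (Lemma~\ref{10l:ha}) that the inclusion $\Omega_{Y\suban}\bu(*D) \hookrightarrow j_*\cala_{X\suban}\bu$ is a quasi-isomorphism, via an explicit Laurent-series analysis on polydisks; one identifies $\hh^k(Y\suban, j_*\cala_{X\suban}\bu)$ with $H^k(X\suban,\C)$ using fineness of $j_*\cala_{X\suban}^q$; and one identifies $\hh^k\big(Y\suban,\Omega_{Y\suban}\bu(*D)\big)$ with $h^k\big(\Omega\alg\bu(X)\big)$ by applying GAGA on the \emph{projective} variety $Y$ to the locally free sheaves $\Omega_Y^q(nD)$, passing to the direct limit in $n$ (Proposition~\ref{p:limit}), and comparing \v{C}ech complexes (Propositions~\ref{p:gaga} and~\ref{p:yx}). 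None of this machinery is recoverable from the ingredients you list, so your proposal, while a correct account of why the general theorem implies the affine case, does not contain a proof of the statement itself.
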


It is important to note that the left-hand side is the singular
cohomology of the complex manifold $X\suban$, not of the affine variety $X$.
In fact, in the Zariski topology, a constant sheaf on an irreducible
variety is always flasque (Example~\ref{exam:irreducible}), and hence
acyclic (Corollary~\ref{c:flasque_acyclic}), so that $H^k(X, \C) = 0$ for all $k > 0$ if $X$ is irreducible.

\subsection{Proof that the General Case Implies the Affine Case}

Assume Theorem~\ref{t:algebraic}.
It suffices to prove that for a smooth affine complex variety $X$, the
hypercohomology $\hh^k(X, \Omega\alg\bu)$ reduces to the cohomology
of the complex $\Omega\alg\bu(X)$.
Since $\Omega\alg^q$ is a coherent algebraic sheaf,
by Serre's vanishing theorem for an affine variety (Theorem~\ref{t:affinity}),
$\Omega\alg^q$ is acyclic on $X$.
%
%
%
By Theorem~\ref{t:acyclic},
\[
\hh^k(X, \Omega\alg\bu) \simeq
h^k\big(\Omega\alg\bu(X)\big). 
\]

\subsection{Proof that the Affine Case Implies the General Case}

Assume Corollary~\ref{c:affine}. The proof is based on the facts that every algebraic variety $X$
has an \term{affine open cover},
an open cover $\fraku = \{ U_{\ga} \}$ in which every
$U_{\ga}$ is an affine open set, and that the intersection of
two affine open sets is affine open.
The existence of an affine open cover for an algebraic variety
follows from the elementary fact that every quasi-projective variety
has an affine open cover;
since an algebraic variety by definition has an open cover
by quasi-projective varieties, it necessarily has an open cover
by affine varieties.

Since $\Omega\alg\bu$ is a complex of locally free
and hence coherent
algebraic sheaves, by Serre's vanishing theorem for an affine variety (Theorem~\ref{t:affinity}),
$\Omega\alg\bu$ is acyclic on an
affine open cover.
By Theorem~\ref{t:chisom}, there is an isomorphism
\begin{equation}\label{e:algisom}
\check{H}^*(\fraku, \Omega\alg\bu) \simeq \hh^*(X, \Omega\alg\bu)
\end{equation}
between the \v{C}ech cohomology of $\Omega\alg\bu$ on the
affine open cover $\fraku$
and the hypercohomology of $\Omega\alg\bu$ on $X$.
Similarly, by Cartan's theorem~B (because a complex
affine variety with the complex topology is Stein)
and Theorem~\ref{t:chisom}, the
corresponding statement in the analytic category is also true:
if $\fraku\suban := \{ (U_{\ga})_{\an}\}$, then
\begin{equation} \label{e:anisom}
\check{H}^*(\fraku\suban, \Omega\suban\bu) \simeq \hh^*(X\suban, \Omega\suban\bu).
\end{equation}

The \v{C}ech cohomology $\check{H}^*(\fraku,\Omega\alg\bu)$ is the
cohomology of the single complex associated to the double complex
$\bigoplus K\alg^{p,q} = \bigoplus \check{C}^p(\fraku, \Omega\alg^q)$.
The $E_1$ term of the usual spectral sequence of this double complex is
\begin{alignat*}{2}
E_{1,{\alg}}^{p,q} &= H_d^{p,q} = h_d^q(K^{p,\bullet})
= h_d^q\big( \check{C}^p(\fraku, \Omega\alg\bu)\big) \\
&= h_d^q\Big( \prod_{\ga_0 < \cdots < \ga_p} \Omega\alg\bu(U_{\ga_0
  \cdots \ga_p}) \Big)\\
&=\prod_{\ga_0 < \cdots < \ga_p}  h_d^q\big( \Omega\alg\bu(
U_{\ga_0 \cdots \ga_p}) \big) \\
&=\prod_{\ga_0 < \cdots < \ga_p}  H^q(U_{\ga_0 \cdots \ga_p,{\an}},
{\C}) &\quad&\text{(by Corollary~\ref{c:affine})}.
\end{alignat*}

A completely similar computation applies to the usual spectral sequence of
the double complex $\bigoplus K\suban^{p,q}$ $= \bigoplus_{p,q}
\check{C}^p(\fraku\suban, \Omega\suban^q)$ converging to the \v{C}ech
cohomology $\check{H}^*(\fraku\suban, \Omega\suban\bu)$:
the $E_1$ term of this spectral sequence is
\begin{alignat*}{2}
E_{1,{\an}}^{p,q} & = \prod_{\ga_0 < \cdots < \ga_p} h_d^q
\big( \Omega\suban\bu(U_{\ga_0 \cdots \ga_p,{\an}}) \big) \\
&=\prod_{\ga_0 < \cdots < \ga_p}  H^q(U_{\ga_0 \cdots \ga_p,{\an}},
\C) &\quad&\text{(by Corollary~\ref{c:analytic})}.
\end{alignat*}

The isomorphism in $E_1$ terms,
\[
E_{1,{\alg}} \overset{\sim}{\to} E_{1,{\an}},
\]
commutes with the \v{C}ech differential $d_1 = \delta$ and
induces an isomorphism in $E_{\infty}$ terms,
\[
\xymatrix@R=8pt{
E_{\infty,{\alg}} \ar@{=}[d] \ar[r]^-{\sim} & E_{\infty,{\an}}
\ar@{=}[d] \\
\check{H}^*(\fraku, \Omega\alg\bu) & \check{H}^*(\fraku\suban, \Omega\suban\bu).
}
\]
Combined with \eqref{e:algisom} and \eqref{e:anisom}, this gives
\[
\hh^*(X, \Omega\alg\bu) \simeq \hh^*(X\suban, \Omega\suban\bu),
\]
which, as we have seen, is equivalent to the algebraic de Rham theorem (Theorem~\ref{t:algebraic})
for a smooth complex algebraic variety.

\section{The Algebraic de Rham Theorem for an Affine Variety}

It remains to prove the algebraic de Rham theorem in the form of
Corollary~\ref{c:affine} for a smooth affine complex variety $X$.
This is the most difficult case and is in fact the heart of the
matter.
We give a proof that is different from Grothendieck's in \cite{grothendieck66}.

A \term{normal crossing divisor}\index{normal crossing divisor} on a smooth algebraic variety is a
divisor that is locally the zero set of an equation of the form
$z_1\cdots z_k =0$, where $z_1, \ldots, z_N$ are local parameters.
We first describe a standard procedure by which any smooth affine variety $X$
may be assumed to be the complement of a normal crossing divisor $D$
in a smooth complex projective variety $Y$.
Let $\bar{X}$ be the projective closure of $X$; for example,
if $X$ is defined by polynomial equations
\[
f_i(z_1, \ldots, z_N) = 0
\]
in $\C^N$, then $\bar{X}$ is defined by the equations
\[
f_i\Big( \frac{Z_1}{Z_0}, \ldots, \frac{Z_N}{Z_0}\Big) = 0
\]
in $\C P^N$, where $Z_0, \ldots, Z_N$ are the homogeneous coordinates
on $\C P^N$ and $z_i = Z_i/Z_0$.
In general, $\bar{X}$ will be a singular projective variety.
By Hironaka's resolution of singularities, there is a surjective regular map
$\pi\colon Y \to \bar{X}$ from a smooth projective variety $Y$
to $\bar{X}$ such that $\pi^{-1} (\bar{X} - X)$ is a normal crossing divisor $D$ in $Y$
and $\pi|_{Y-D}\colon Y-D \to X$ is an isomorphism.
Thus, we may assume that $X = Y - D$, with an inclusion map
$j\colon X \hookrightarrow Y$.

Let $\Omega_{Y\suban}^k(*D)$ be the sheaf of meromorphic $k$-forms on
$Y\suban$ that are holomorphic on $X\suban$ with poles of any order $\ge 0$ along $D\suban$
(order $0$ means no poles)
and let $\cala_{X\suban}^k$ be the sheaf of $\cinf$ complex-valued
$k$-forms on $X\suban$.
By abuse of notation, we use $j$ also to denote the inclusion
$X\suban \hookrightarrow Y\suban$.
The \term{direct image sheaf}\index{direct image!sheaf} $j_* \cala_{X\suban}^k$ is by definition
the sheaf on $Y\suban$ defined by
\[
\big(j_* \cala_{X\suban}^k\big)(V) =  \cala_{X\suban}^k (V\cap X\suban)
\]
for any open set $V \subset Y\suban$.
Since a section of $\Omega_{Y\suban}^k(*D)$ over $V$
is holomorphic on $V \cap X\suban$ and therefore smooth
there, the sheaf $\Omega_{Y\suban}^k(*D)$ of meromorphic
forms is a subsheaf of the sheaf $j_* \cala_{X\suban}^k$ of smooth forms.
The main lemma of our proof, due to Hodge and Atiyah
\cite[Lem.~17, p.~77]{hodge--atiyah}, asserts that the inclusion
\begin{equation} \label{e:meromorphic}
\Omega_{Y\suban}\bu(*D) \hookrightarrow j_* \cala_{X\suban}\bu
\end{equation}
of complexes of sheaves is a quasi-isomorphism.
This lemma makes essential use of the fact that $D$ is a normal crossing
divisor.
Since the proof of the lemma is quite technical, in order not to
interrupt the flow of the exposition, we postpone it to the end of the
chapter.

By Theorem~\ref{t:qi}, the quasi-isomorphism \eqref{e:meromorphic}
induces an isomorphism
\begin{equation} \label{e:isomhyper}
\hh^k\big(Y\suban, \Omega_{Y\suban}\bu(*D)\big) \simeq
\hh^k(Y\suban, j_*\cala_{X\suban}\bu)
\end{equation}
in hypercohomology.
If we can show that the right-hand side is $H^k(X\suban,\C)$ and the
left-hand side is $h^k\big(\Omega\alg\bu(X)\big)$, the algebraic de Rham
theorem for the affine variety $X$ (Corollary~\ref{c:affine}),
$h^k\big( \Omega\alg\bu(X)\big) \simeq H^k(X\suban,\C)$, will follow.

\subsection{The Hypercohomology of the Direct Image of a Sheaf of Smooth Forms}\index{hypercohomology!of the direct image of a sheaf of smooth forms}

To deal with the right-hand side of \eqref{e:isomhyper}, we prove a
more general lemma valid on any complex manifold.

\begin{lemma}
Let $M$ be a complex manifold and $U$ an open submanifold,
with $j\colon U \hookrightarrow M$ the inclusion map.
Denote the sheaf of smooth $\C$-valued $k$-forms on $U$ by
$\cala_U^k$.
Then there is an isomorphism
\[
\hh^k(M, j_*\cala_U\bu) \simeq H^k (U, \C).
\]
\end{lemma}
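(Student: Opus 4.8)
The plan is to reduce the statement to the ordinary de Rham theorem on $U$. The key observation is that the complex $j_*\cala_U\bu$ is a complex of acyclic sheaves on $M$, so its hypercohomology is computed directly from global sections via Theorem~\ref{t:acyclic}, and the resulting complex of global sections is nothing but the smooth de Rham complex of $U$. So the whole proof splits into two independent checks: acyclicity of the sheaves $j_*\cala_U^k$, and the identification of $(j_*\cala_U\bu)(M)$ with the de Rham complex of $U$.

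First I would show that each sheaf $j_*\cala_U^k$ is \emph{fine} on $M$, hence acyclic. Given a locally finite open cover $\{V_{\ga}\}$ of $M$, the underlying smooth manifold of the complex manifold $M$ is paracompact and carries a smooth partition of unity $\{\rho_{\ga}\}$ subordinate to $\{V_{\ga}\}$. For an open set $V\subset M$, a section of $j_*\cala_U^k$ over $V$ is by definition a smooth $k$-form on $V\cap U$, so I would define $\eta_{\ga,V}$ to be multiplication by $\rho_{\ga}|_{V\cap U}$. This is a sheaf endomorphism of $j_*\cala_U^k$; because $\supp\rho_{\ga}\subset V_{\ga}$, the stalk map $\eta_{\ga,x}$ vanishes whenever $x\notin V_{\ga}$, so $\supp\eta_{\ga}\subset V_{\ga}$, and because $\sum_{\ga}\rho_{\ga}=1$ the stalk sum $\sum_{\ga}\eta_{\ga,x}$ is the identity. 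Thus $\{\eta_{\ga}\}$ is a partition of unity of $j_*\cala_U^k$, exactly as in the proof that $\cala^k$ is fine (Proposition~\ref{2p:fine}). Since every open subset of a manifold is paracompact, a fine sheaf on $M$ is acyclic on $M$, and so each $j_*\cala_U^k$ is acyclic.

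With acyclicity in hand, Theorem~\ref{t:acyclic} yields $\hh^k(M, j_*\cala_U\bu)\simeq h^k\big((j_*\cala_U\bu)(M)\big)$. I would then identify the complex of global sections: by the definition of the direct image, $(j_*\cala_U^k)(M)=\cala_U^k(M\cap U)=\cala_U^k(U)$, and the induced differential is the exterior derivative, so $(j_*\cala_U\bu)(M)$ is precisely the smooth de Rham complex of $U$ with complex coefficients. Its cohomology is $H_{\text{dR}}^k(U)$, which by de Rham's theorem in its complex-coefficient form, together with the identification of sheaf cohomology with singular cohomology (Remark~\ref{r:cohomology}), equals $H^k(U,\C)$. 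Chaining these isomorphisms gives $\hh^k(M, j_*\cala_U\bu)\simeq H^k(U,\C)$.

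The only step requiring genuine care is the fineness of $j_*\cala_U^k$: one must confirm that multiplication by a partition of unity on the \emph{ambient} manifold $M$ really defines a sheaf endomorphism of the pushforward, and that its support behaves correctly at points $x$ on $\partial U$ or outside $U$, where the stalk $(j_*\cala_U^k)_x$ is a direct limit of spaces of forms on the sets $V\cap U$ for $V\ni x$. This is routine once one unwinds the definition of $j_*$, and the argument is formally identical to Proposition~\ref{2p:fine}. Everything after acyclicity is bookkeeping, since the two cited theorems then do all the work.
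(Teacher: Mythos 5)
Your proof is correct and follows essentially the same route as the paper: establish that each $j_*\cala_U^k$ is a fine sheaf on $M$, conclude acyclicity, apply Theorem~\ref{t:acyclic}, and identify $(j_*\cala_U\bu)(M)$ with the smooth de Rham complex of $U$. The only cosmetic difference is that you verify fineness by constructing the partition of unity $\{\eta_\ga\}$ directly, whereas the paper observes that $j_*\cala_U^k$ is a sheaf of $\cala^0$-modules on $M$ and invokes the general principle from Section~\ref{ss:fine} that any such sheaf is fine---a principle whose proof is exactly your construction.
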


\pf
Let $\cala^0$ be the sheaf of smooth $\C$-valued functions on the complex
manifold $M$.
For any open set $V \subset M$, there is an $\cala^0(V)$-module
structure on $(j_*\cala_U^k)(V) = \cala_U^k(U \cap V)$:
\begin{align*}
\cala^0(V) \times \cala_U^k(U \cap V) &\to \cala_U^k(U \cap V),\\
(f, \omega) &\mapsto f\cdot \omega.
\end{align*}
Hence, $j_*\cala_U^k$ is a sheaf of $\cala^0$-modules on $M$.
As such,  $j_*\cala_U^k$ is a fine sheaf on $M$
(Section~\ref{ss:fine}).

Since fine sheaves are acyclic, by Theorem~\ref{t:acyclic},
\begin{alignat*}{2}
\hh^k(M, j_*\cala_U\bu) &\simeq h^k\big( (j_*\cala_U\bu)(M) \big) \\
&= h^k \big( \cala_U\bu(U) \big) &\quad&\text{(definition of
  $j_*\cala_U\bu$)}\\
&= H^k(U,\C) &\quad&\text{(by the smooth de Rham theorem)}.
\end{alignat*}
\epf

Applying the lemma to $M= Y\suban$ and $U=X\suban$, we obtain
\[
\hh^k(Y\suban, j_*\cala_{X\suban}\bu) \simeq H^k(X\suban, \C).
\]
This takes care of the right-hand side of \eqref{e:isomhyper}.

\subsection{The Hypercohomology of Rational and Meromorphic Forms}

Throughout this subsection, the smooth complex affine variety $X$ is the
complement of a normal crossing divisor $D$ in a smooth complex projective
variety $Y$.
Let $\Omega_{Y\suban}^q(nD)$ be the sheaf of meromorphic $q$-forms on
$Y\suban$ that are holomorphic on $X\suban$ with poles of order $\le n$
along $D\suban$.
As before, $\Omega_{Y\suban}^q(*D)$ is the sheaf of meromorphic
$q$-forms on $Y\suban$ that are holomorphic on $X\suban$ with at most poles
(of any order) along $D$.
Similarly, $\Omega_Y^q(*D)$ and $\Omega_Y^q(nD)$ are
their algebraic counterparts, the sheaves of
rational $q$-forms on $Y$ that are regular on $X$ with poles along $D$ of
arbitrary order or order $\le n$ respectively.
Then
\[
\Omega_{Y\suban}^q (*D) = \varinjlim_n \Omega_{Y\suban}^q(nD) \quad
\text{and} \quad
\Omega_{Y}^q (*D) = \varinjlim_n \Omega_{Y}^q(nD).
\]

Let $\Omega_X^q$ and $\Omega_Y^q$ be the sheaves of
regular $q$-forms on $X$ and $Y$, respectively; they are
what would be written $\Omega\alg^q$ if there is only one variety.
Similarly, let $\Omega_{X\suban}^q$ and $\Omega_{Y\suban}^q$ be sheaves
of holomorphic $q$-forms on $X\suban$ and $Y\suban$, respectively.
There is another description of the sheaf $\Omega_Y^q(*D)$
that will prove useful.
Since a regular form on $X = Y - D$ that is not defined on $D$
can have at most poles along $D$ (no essential singularities),
if $j\colon X \to Y$ is the inclusion map, then
\[
j_* \Omega_X^q = \Omega_Y^q(*D).
\]
Note that the corresponding statement in the analytic category is not
true:
if $j\colon X\suban \to Y\suban$ now denotes the inclusion of the
corresponding analytic manifolds, then in general
\[
j_* \Omega_{X\suban}^q \ne \Omega_{Y\suban}^q(*D)
\]
because a holomorphic form on $X\suban$ that is not defined
along $D\suban$ may have an essential singularity on $D\suban$.

Our goal now is to prove that the hypercohomology
$\hh^*\big(Y\suban, \Omega_{Y\suban}\bu(*D)\big)$ of the complex
$\Omega_{Y\suban}\bu(*D)$ of sheaves of meromorphic forms on $Y\suban$
is computable from the algebraic de Rham complex on $X$:
\[
\hh^k\big( Y\suban, \Omega_{Y\suban}^{\bullet}(*D)\big) \simeq
h^k\big( \Gamma(X, \Omega\alg\bu)\big).
\]
This will be accomplished through a series of isomorphisms.

First, we prove something akin to a GAGA principle for
hypercohomology.
The proof requires commuting direct limits and cohomology,
for which we shall invoke the following criterion.
A topological space is said to be \term{noetherian}\index{noetherian topological space}
if it satisfies the descending chain condition for closed sets:
any descending chain $Y_1 \supset Y_2 \supset \cdots$ of closed
sets must terminate after finitely many steps.
As shown in a first course in algebraic geometry, affine and projective varieties are noetherian \cite[Exa.~1.4.7, p.~5; Exer.1.7(b), p.~8; Exer.~2.5(a),
p.~11]{hartshorne}.

\begin{prop}[Commutativity of direct limit with cohomology]
 \label{p:limit}
Let $(\calf_{\ga})$ be a direct system of sheaves on a topological
space $Z$. The natural map\index{commutativity!of direct limit with cohomology}
\[
\varinjlim H^k(Z, \calf_{\ga}) \to H^k(Z, \varinjlim \calf_{\ga})
\]
is an isomorphism if
\begin{enumerate}[(i)]
\item\label{ch2:prp2102.i}  $Z$ is compact; or
\item\label{ch2:prp2102.ii}  $Z$ is noetherian.
\end{enumerate}
\end{prop}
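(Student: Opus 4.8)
The plan is to compute both sides from a single resolution, exploiting that a filtered direct limit of abelian groups is an exact functor (as already used in the proof of Proposition~\ref{p:godement}) and therefore commutes with the cohomology $h^k$ of any complex. Concretely, the functoriality of the Godement resolution makes $\ga \mapsto (0 \to \calf_{\ga} \to \calc^\bullet\calf_{\ga})$ a direct system of resolutions; applying the exact functor $\varinjlim$ yields an exact sequence
\[
0 \to \varinjlim \calf_{\ga} \to \varinjlim \calc^\bullet\calf_{\ga},
\]
so that $\varinjlim \calc^\bullet\calf_{\ga}$ is a resolution of $\varinjlim\calf_{\ga}$ by the sheaves $\varinjlim_{\ga} \calc^p\calf_{\ga}$. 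If I can show (Claim A) that each $\varinjlim_{\ga} \calc^p\calf_{\ga}$ is acyclic on $Z$, then Theorem~\ref{4t:acyclic} computes $H^k(Z,\varinjlim\calf_{\ga})$ as $h^k\big(\Gamma(Z,\varinjlim\calc^\bullet\calf_{\ga})\big)$. On the other hand, exactness of $\varinjlim$ gives $\varinjlim_{\ga} H^k(Z,\calf_{\ga}) = h^k\big(\varinjlim_{\ga} \Gamma(Z,\calc^\bullet\calf_{\ga})\big)$, so everything comes down to comparing $\varinjlim_{\ga} \Gamma(Z,\calc^p\calf_{\ga})$ with $\Gamma(Z,\varinjlim_{\ga}\calc^p\calf_{\ga})$.

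That comparison is the degree-$0$ case, which I would isolate as Claim B: for any direct system $(\calg_{\ga})$ of sheaves on a quasi-compact space $Z$, the natural map $\varinjlim_{\ga} \Gamma(Z,\calg_{\ga}) \to \Gamma(Z,\varinjlim_{\ga}\calg_{\ga})$ is an isomorphism. Here I would argue directly on germs: since stalks are themselves filtered colimits, $(\varinjlim\calg_{\ga})_x = \varinjlim(\calg_{\ga})_x$. For surjectivity, a global section of $\varinjlim\calg_{\ga}$ is represented on some open cover by sections of various $\calg_{\ga_i}$; quasi-compactness of $Z$ lets me pass to a finite subcover, filteredness produces a common index $\gb$, and the finitely many pairwise discrepancies on overlaps can be killed by enlarging $\gb$ once more, after which the pieces glue. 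Injectivity is the dual statement: a section dying in the colimit of stalks dies on a neighborhood of each point, and a finite subcover plus a common index force it to vanish in some $\calg_{\gb}(Z)$. Both compact and noetherian spaces are quasi-compact, so Claim B holds in either case; applied to $\calg_{\ga}=\calc^p\calf_{\ga}$ it identifies the two complexes of global sections (compatibly with their differentials) and finishes the argument once Claim A is known.

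Claim A is where the two hypotheses part ways, and the compact case is the main obstacle. When $Z$ is noetherian, every open subset is again noetherian, hence quasi-compact, so Claim B applies over every open $U$; the restriction maps $\Gamma(Z,\calc^p\calf_{\ga})\to\Gamma(U,\calc^p\calf_{\ga})$ are surjective because the Godement sheaves are flasque, and $\varinjlim$ preserves surjectivity, so $\varinjlim_{\ga}\calc^p\calf_{\ga}$ is again flasque and therefore acyclic by Corollary~\ref{c:flasque_acyclic}. When $Z$ is merely compact this breaks down, since an open subset of a compact space need not be quasi-compact and a direct limit of flasque sheaves need not remain flasque. To repair it I would replace flasqueness by softness: on a compact Hausdorff space the Godement sheaves $\calc^p\calf_{\ga}$ are soft, a filtered direct limit of soft sheaves is soft (using that closed subsets of $Z$ are compact, so Claim B applies to sections over closed sets), and soft sheaves are acyclic on a compact Hausdorff space. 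This last input is the crux: it is not literally proved in the text, but it is the exact analogue of the fine-sheaf acyclicity of Section~\ref{ss:fine} and is established in the same spirit, so I would either develop it as a short lemma or invoke it directly.

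Finally I would verify that the isomorphism produced by chaining Theorem~\ref{4t:acyclic}, Claim B, and exactness of $\varinjlim$ is indeed the canonical comparison map of the statement; this is a routine check that the identifications are induced by the structure maps $\calf_{\ga} \to \varinjlim\calf_{\ga}$ and by the comparison $\varinjlim\calc^\bullet\calf_{\ga} \to \calc^\bullet(\varinjlim\calf_{\ga})$, so that the conclusion $\varinjlim_{\ga} H^k(Z,\calf_{\ga}) \overset{\sim}{\to} H^k(Z,\varinjlim\calf_{\ga})$ holds in both cases.
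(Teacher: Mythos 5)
The paper itself gives no argument for this proposition: its ``proof'' consists of citations, to Hodge--Atiyah (Lem.~17's companion, Lem.~4) for the compact case and to Hartshorne (Ch.~III, Prop.~2.9) or Godement for the noetherian case. Your noetherian argument is, in substance, exactly the proof in Hartshorne: functoriality of the Godement resolution gives a direct system of resolutions, filtered colimits of sheaves are exact, a direct limit of flasque sheaves on a noetherian space is flasque (hence acyclic by Corollary~\ref{c:flasque_acyclic}), and global sections commute with direct limits. That half is correct, with one caveat you should make explicit: the step in your Claim B where ``the finitely many pairwise discrepancies on overlaps can be killed by enlarging $\gb$ once more'' requires the overlaps $U_i \cap U_j$ themselves to be quasi-compact, because a section that dies in the colimit sheaf dies only locally, at a stage depending on the point, and filteredness only bounds finitely many stages at a time. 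On a noetherian space this is automatic (every open subset is noetherian, hence quasi-compact), so Claim B and the flasqueness of $\varinjlim_{\ga} \calc^p\calf_{\ga}$ do go through over every open set.

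The compact case, however, has a genuine gap, and it is precisely this overlap issue. Claim B as you state it --- for an arbitrary quasi-compact $Z$ --- is not justified: even in a compact Hausdorff space the intersection of two open sets need not be quasi-compact, so your surjectivity argument stalls, and commutation of $\varinjlim$ with $\Gamma(Z,\ )$ in that generality requires extra hypotheses. The standard repair in the Hausdorff setting is a different technique: shrink the finite cover to compact sets $K_i \subset U_i$ (using normality), kill each discrepancy at a single stage on the compact set $K_i \cap K_j$, and glue on suitably shrunken neighborhoods; equivalently, one works throughout with sections over compact subsets, $\calf(K) := \varinjlim_{U \supset K} \calf(U)$, which is what Hodge--Atiyah and Godement actually do. Your soft-sheaf route for Claim A has the same hidden dependency (that a filtered colimit of soft sheaves is soft is proved exactly by this compact-sections argument), needs the Hausdorff hypothesis stated explicitly, and rests on three facts --- Godement sheaves are soft, filtered colimits of soft sheaves are soft, soft sheaves are acyclic on compact Hausdorff spaces --- none of which is in the paper. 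You flag the last of these as the crux, but the unproved compact-sections lemma is the deeper one: in the compact case even the identification of the global-sections complexes, not just the acyclicity of the limit resolution, depends on it.
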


\pf
For \ref{ch2:prp2102.i}, see \cite[Lem.~4, p.~61]{hodge--atiyah}.
For \ref{ch2:prp2102.ii}, see \cite[Ch.~III, Prop.~2.9, p.~209]{hartshorne}
or \cite[Ch.~II, remark after Th.~4.12.1, p.~194]{godement}.
 \epf

\begin{prop} \label{p:gaga}
In the notation above, there is an isomorphism in hypercohomology
\[
\hh^*\big(Y, \Omega_Y\bu(*D)\big) \simeq
\hh^*\big(Y\suban, \Omega_{Y\suban}\bu(*D)\big).
\]
\end{prop}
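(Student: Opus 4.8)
The plan is to compare the two hypercohomologies through their second spectral sequences, exactly in the spirit of the proof of Theorem~\ref{t:projective}. By \eqref{e:e1}, the $E_1$ terms of the second spectral sequences converging to $\hh^*(Y,\Omega_Y\bu(*D))$ and to $\hh^*(Y\suban,\Omega_{Y\suban}\bu(*D))$ are
\[
H^p\big(Y,\Omega_Y^q(*D)\big) \quad\text{and}\quad H^p\big(Y\suban,\Omega_{Y\suban}^q(*D)\big),
\]
respectively, with their $E_1$-differentials induced by the complex differentials $d\colon \Omega_Y^q(*D)\to\Omega_Y^{q+1}(*D)$ and $d\colon \Omega_{Y\suban}^q(*D)\to\Omega_{Y\suban}^{q+1}(*D)$. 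The continuity of the identity map $Y\suban \to Y$ (Zariski opens are open in the complex topology) gives a natural comparison morphism of these spectral sequences, so it suffices to produce, for all $p$ and $q$, an isomorphism between the two groups above that is compatible with $d$; as in Theorem~\ref{t:projective} this induces an isomorphism of $E_\infty$ terms, and then the finite-filtration five-lemma argument of Theorem~\ref{t:qi} upgrades it to the desired isomorphism in hypercohomology.

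First I would replace the ``infinite pole order'' sheaves by direct limits, using the identities recorded above,
\[
\Omega_Y^q(*D) = \varinjlim_n \Omega_Y^q(nD), \qquad \Omega_{Y\suban}^q(*D) = \varinjlim_n \Omega_{Y\suban}^q(nD),
\]
and then commute cohomology with these limits by Proposition~\ref{p:limit}: on the algebraic side $Y$ with its Zariski topology is noetherian (Proposition~\ref{p:limit}\ref{ch2:prp2102.ii}), while on the analytic side $Y\suban$ is compact, being a projective variety (Proposition~\ref{p:limit}\ref{ch2:prp2102.i}). This gives
\[
H^p\big(Y,\Omega_Y^q(*D)\big) \simeq \varinjlim_n H^p\big(Y,\Omega_Y^q(nD)\big), \qquad H^p\big(Y\suban,\Omega_{Y\suban}^q(*D)\big) \simeq \varinjlim_n H^p\big(Y\suban,\Omega_{Y\suban}^q(nD)\big).
\]

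The point of passing to finite pole order is that each $\Omega_Y^q(nD)$ is a coherent algebraic sheaf on the smooth projective variety $Y$ (it is the locally free sheaf $\Omega_Y^q$ twisted by the line bundle associated to the divisor $nD$), and its analytification is precisely $\Omega_{Y\suban}^q(nD)$. Hence Serre's GAGA isomorphism \eqref{e:gaga} applies at each level $n$ and yields
\[
H^p\big(Y,\Omega_Y^q(nD)\big) \simeq H^p\big(Y\suban,\Omega_{Y\suban}^q(nD)\big).
\]
Because GAGA is natural in the sheaf, these isomorphisms are compatible with the inclusions $\Omega_Y^q(nD)\hookrightarrow\Omega_Y^q((n+1)D)$ defining the two direct systems, so they survive the passage to the limit and produce the sought isomorphism of $E_1$ terms. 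Naturality in the sheaf also guarantees compatibility with the complex differential $d$, closing the argument sketched in the first paragraph.

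The main obstacle is the interchange of cohomology with the direct limit: the algebraic and analytic sides rest on genuinely different hypotheses — noetherianness of the Zariski topology versus compactness of the complex topology — and both are furnished by Proposition~\ref{p:limit}. The other delicate point is that GAGA cannot be applied to $\Omega_Y^q(*D)$ directly, since it is not coherent (it is only a direct limit of coherent sheaves); the finiteness of the pole order in $\Omega_Y^q(nD)$ is exactly what makes each term coherent and hence amenable to \eqref{e:gaga}, and it is the direct-limit step that then recovers the statement for $*D$.
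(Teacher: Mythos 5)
Your proposal is correct and follows essentially the same route as the paper's proof: apply Serre's GAGA principle \eqref{e:gaga} to the coherent (locally free) sheaves $\Omega_Y^q(nD)$, pass to the direct limit over $n$ using Proposition~\ref{p:limit} (noetherianness of $Y$ on the algebraic side, compactness of $Y\suban$ on the analytic side), and identify the resulting isomorphic groups as the $E_1$ terms of the second spectral sequences via \eqref{e:e1}, whence an isomorphism of $E_\infty$ terms and of the hypercohomologies. Your additional remarks on naturality of GAGA with respect to the differential and the finite-filtration five-lemma argument simply make explicit steps the paper leaves implicit.
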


\pf
Since $Y$ is a projective variety and each $\Omega_Y\bu(nD)$ is locally
free, we can apply Serre's GAGA principle \eqref{e:gaga} to get an isomorphism
\[
H^p\big(Y, \Omega_Y^q(nD)\big) \simeq
H^p\big(Y\suban, \Omega_{Y\suban}^q(nD)\big).
\]
Next, take the direct limit of both sides as $n \to \infty$.
Since the projective variety $Y$ is noetherian and the complex
manifold $Y\suban$ is compact, by Proposition~\ref{p:limit}, we obtain
\[
H^p\big(Y, \varinjlim_n \Omega_Y^q(nD)\big) \simeq
H^p\big( Y\suban, \varinjlim_n \Omega_{Y\suban}^q(nD)\big),
\]
which is
\[
H^p\big(Y, \Omega_Y^q(*D)\big)  \simeq H^p\big( Y\suban, \Omega_{Y\suban}^q(*D) \big).
\]

%

Now the two cohomology groups $H^p\big(Y, \Omega_Y^q(*D)\big)$
and $H^p\big( Y\suban, \Omega_{Y\suban}^q(*D) \big)$ are the $E_1$ terms of
the second spectral sequences of the hypercohomologies of
$\Omega_Y\bu(*D)$
and $\Omega_{Y\suban}\bu(*D)$, respectively (see \eqref{e:e1}).
An isomorphism of the $E_1$ terms induces an isomorphism of the
$E_{\infty}$ terms.
Hence,
\[
\hh^*\big(Y, \Omega_Y\bu(*D)\big) \simeq
\hh^*\big(Y\suban, \Omega_{Y\suban}\bu(*D)\big). 
\]
\epf

\begin{prop} \label{p:yx}
In the notation above, there is an isomorphism
\[
\hh^k\big(Y, \Omega_Y\bu(*D)\big) \simeq \hh^k(X, \Omega_X\bu)
\]
for all $k \ge 0$.
\end{prop}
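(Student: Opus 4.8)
The plan is to compute both hypercohomology groups by \v{C}ech cohomology on compatible affine open covers and then to observe that the resulting \v{C}ech--sheaf double complexes are literally identical, the identification being furnished by the relation $j_*\Omega_X^q = \Omega_Y^q(*D)$ established above. Since $Y$ is projective, it admits an affine open cover $\fraku = \{ U_\ga \}$; because $Y$ is separated (separatedness is part of the definition of a variety) and $X = Y - D$ is itself an affine open subset of $Y$, each intersection $U_\ga \cap X$ is again affine, so $\fraku_X := \{ U_\ga \cap X \}$ is an affine open cover of $X$. For the same reason every finite intersection $U_{\ga_0 \cdots \ga_p}$ is affine in $Y$ and $U_{\ga_0 \cdots \ga_p} \cap X$ is affine in $X$.

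Next I would verify the acyclicity hypotheses needed to invoke Theorem~\ref{t:chisom}. For $\Omega_X^q$ this is immediate: it is a coherent algebraic sheaf, so by Serre's vanishing theorem (Theorem~\ref{t:affinity}) it is acyclic on each affine intersection $U_{\ga_0 \cdots \ga_p} \cap X$, whence $\Omega_X\bu$ is acyclic on $\fraku_X$. The sheaf $\Omega_Y^q(*D) = \varinjlim_n \Omega_Y^q(nD)$ is \emph{not} coherent, so Serre's theorem does not apply directly; this is the one genuinely technical point. Each $\Omega_Y^q(nD)$ is locally free and hence coherent, so $H^k(U_{\ga_0 \cdots \ga_p}, \Omega_Y^q(nD)) = 0$ for $k > 0$; since each affine intersection is noetherian, Proposition~\ref{p:limit} lets me commute the direct limit past cohomology, giving
\[
H^k\big(U_{\ga_0 \cdots \ga_p}, \Omega_Y^q(*D)\big) \simeq \varinjlim_n H^k\big(U_{\ga_0 \cdots \ga_p}, \Omega_Y^q(nD)\big) = 0
\]
for $k > 0$. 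Thus $\Omega_Y\bu(*D)$ is acyclic on $\fraku$, and Theorem~\ref{t:chisom} yields isomorphisms $\hh^k(Y, \Omega_Y\bu(*D)) \simeq \check{H}^k(\fraku, \Omega_Y\bu(*D))$ and $\hh^k(X, \Omega_X\bu) \simeq \check{H}^k(\fraku_X, \Omega_X\bu)$.

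It then remains to identify the two \v{C}ech complexes. By the definition of the direct image together with $j_*\Omega_X^q = \Omega_Y^q(*D)$,
\[
\Omega_Y^q(*D)\big(U_{\ga_0 \cdots \ga_p}\big) = \big(j_*\Omega_X^q\big)\big(U_{\ga_0 \cdots \ga_p}\big) = \Omega_X^q\big(U_{\ga_0 \cdots \ga_p} \cap X\big),
\]
while the intersection of the sets $U_{\ga_i} \cap X$ is exactly $U_{\ga_0 \cdots \ga_p} \cap X$. Hence the cochain groups $\check{C}^p(\fraku, \Omega_Y^q(*D))$ and $\check{C}^p(\fraku_X, \Omega_X^q)$ coincide term by term, and both the \v{C}ech differential and the exterior differential are carried to one another under this identification; the two \v{C}ech--sheaf double complexes are therefore equal. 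Consequently their total cohomologies agree, giving $\check{H}^k(\fraku, \Omega_Y\bu(*D)) \simeq \check{H}^k(\fraku_X, \Omega_X\bu)$ and hence the desired isomorphism. The only real obstacle is the acyclicity of $\Omega_Y^q(*D)$ on affines, since passing to the direct limit is precisely what destroys coherence; once that is handled via Proposition~\ref{p:limit}, everything else is bookkeeping, provided the covers are chosen compatibly.
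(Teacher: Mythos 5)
Your proposal is correct and follows essentially the same route as the paper's own proof: acyclicity of $\Omega_Y\bu(*D)$ on affine opens via Serre vanishing for the coherent sheaves $\Omega_Y^q(nD)$ plus commuting the direct limit with cohomology (Proposition~\ref{p:limit}), acyclicity of $\Omega_X\bu$ via Serre vanishing, Theorem~\ref{t:chisom} on both sides, and the term-by-term identification of the two \v{C}ech double complexes through $j_*\Omega_X^q = \Omega_Y^q(*D)$. If anything, you are slightly more careful than the paper in spelling out that separatedness makes all finite intersections affine, which is exactly the hypothesis Theorem~\ref{t:chisom} requires.
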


\pf
If $V$ is an affine open set in $Y$, then $V$ is noetherian and so
by Proposition~\ref{p:limit}\ref{ch2:prp2102.ii}, for $p > 0$,
\begin{align*}
H^p\big(V, \Omega_Y^q(*D)\big)
&= H^p\big(V, \varinjlim_n \Omega_Y^q(nD)\big) \\
&\simeq \varinjlim_n H^p\big(V, \Omega_Y^q(nD) \big) \\
&= 0,
\end{align*}
the last equality following from Serre's vanishing theorem (Theorem~\ref{t:affinity}),
since $V$ is affine and $\Omega_Y^q(nD)$ is locally free and
therefore coherent.
Thus, the complex of sheaves $\Omega_Y\bu(*D)$ is
acyclic on any
affine open cover $\fraku = \{ U_{\ga} \}$ of $Y$.
By Theorem~\ref{t:chisom}, its hypercohomology can be
computed from its \v{C}ech cohomology:
\[
\hh^k\big(Y, \Omega_Y\bu(*D)\big) \simeq
\check{H}^k\big(\fraku, \Omega_Y\bu(*D) \big).
\]

Recall that if $j\colon X \to Y$ is the inclusion map, then
$\Omega_Y\bu(*D) = j_* \Omega_X\bu$.
By definition, the \v{C}ech cohomology
$\check{H}^k\big(\fraku, \Omega_Y\bu(*D) \big)$ is the cohomology
of the associated single complex of the double complex
\begin{align}
K^{p,q} &= \check{C}^p\big(\fraku, \Omega_Y^q(*D)\big) = \check{C}^p(\fraku,
j_*\Omega_X^q) \notag\\
&= \prod_{\ga_0 < \cdots < \ga_p} \Omega^q(U_{\ga_0 \cdots \ga_p} \cap
X). \label{e:y}
\end{align}

Next we compute the hypercohomology ${\hh}^k(X, \Omega_X\bu)$.
The restriction $\fraku|_X :=\{ U_{\ga} \cap X \}$ of $\fraku$ to $X$
is an affine open cover of $X$.
Since $\Omega_X^q$ is locally free \cite[Ch.~III, Th.~2, p.~200]{shafarevich}, by Serre's vanishing theorem for
an affine variety again,
\[
H^p(U_{\ga}\cap X, \Omega_X^q) = 0 \quad\text{for all } p > 0.
\]
Thus, the complex of sheaves $\Omega_X\bu$ is acyclic on the open cover $\fraku|_X$ of $X$.
By Theorem~\ref{t:chisom},
\[
\hh^k(X, \Omega_X\bu) \simeq \check{H}^k(\fraku|_X, \Omega_X\bu).
\]
The \v{C}ech cohomology $\check{H}^k(\fraku|_X, \Omega_X\bu)$ is the
cohomology of the single complex associated to the double complex
\begin{align}
K^{p,q} &= \check{C}^p (\fraku|_X, \Omega_X^q) \notag\\
&= \prod_{\ga_0 < \cdots < \ga_p} \Omega^q(U_{\ga_0 \cdots \ga_p} \cap
X). \label{e:x}
\end{align}
Comparing \eqref{e:y} and \eqref{e:x}, we get an isomorphism
\[
\hh^k\big(Y, \Omega_Y\bu(*D)\big) \simeq \hh^k(X, \Omega_X\bu)
\]
for every $k \ge 0$.
 \epf

Finally, because $\Omega_X^q$ is locally free, by Serre's vanishing
theorem
for an affine variety still again,
$H^p(X, \Omega_X^q) =0$ for all $p > 0$.
Thus, $\Omega_X\bu$ is a complex of acyclic sheaves on $X$.
By Theorem~\ref{t:acyclic}, the hypercohomology
$\hh^k(X,\Omega_X\bu)$ can be computed from the complex of global
sections of $\Omega_X\bu$:
\begin{equation} \label{e:alg}
\hh^k(X, \Omega_X\bu) \simeq h^k\big( \Gamma(X, \Omega_X\bu) \big)
= h^k \big(\Omega_{\alg}\bu(X)\big).
\end{equation}

Putting together Propositions \ref{p:gaga} and \ref{p:yx}
with \eqref{e:alg}, we get the desired interpretation
\[
\hh^k\big(Y\suban, \Omega_{Y\suban}\bu(*D)\big) \simeq
h^k\big( \Omega_{\alg}\bu(X) \big)
\]
of the left-hand side of \eqref{e:isomhyper}.
Together with the interpretation of the right-hand side of
\eqref{e:isomhyper} as $H^k(X\suban, \C)$,
this gives Grothendieck's algebraic de Rham theorem for an
affine variety,
\[
H^k(X\suban, \C) \simeq h^k\big( \Omega_{\alg}\bu(X) \big). 
\]

%
%

\subsection{Comparison of Meromorphic and Smooth Forms}\index{comparison!of meromorphic and smooth\\ forms}

It remains to prove that \eqref{e:meromorphic} is a quasi-isomorphism.  We will
reformulate the lemma in slightly more general terms.
Let $M$ be a complex manifold of complex dimension $n$,
let $D$ be a normal crossing divisor in $M$,
and let $U = M - D$ be the complement of $D$ in $M$, with $j\colon U
\hookrightarrow M$ the inclusion map.
Denote by $\Omega_M^q(*D)$ the sheaf of meromorphic $q$-forms on $M$ that
are holomorphic on $U$ with at most poles along $D$,
and by $\cala_U^q:=\cala_U^q(\ , \C)$ the sheaf of smooth $\C$-valued $q$-forms on $U$.
For each $q$, the sheaf $\Omega_M^q(*D)$ is a subsheaf of
$j_*\cala_U^q$.

\begin{lemma}[Fundamental lemma of Hodge and Atiyah ({\cite[Lem.~17, p.~77]{hodge--atiyah}})]\label{10l:ha}
The inclusion $\Omega_M\bu(*D) \hookrightarrow j_*\cala_U\bu$ of
complexes of sheaves is a quasi-isomorphism.\index{fundamental lemma!of Hodge and Atiyah}
\end{lemma}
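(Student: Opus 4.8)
The plan is to check that the inclusion induces an isomorphism on cohomology sheaves, which by the definition of quasi-isomorphism (p.~\pageref{p:quasi}) together with Proposition~\ref{p:stalkcohomology} amounts to checking that it induces an isomorphism on the cohomology of the stalk complexes at every point $p \in M$. Away from $D$ the two complexes restrict to the holomorphic, resp. smooth, de Rham complexes, whose cohomology sheaves are both $\underline{\C}$ concentrated in degree~$0$ by the holomorphic Poincar\'e lemma (Theorem~\ref{t:holompoincare}) and the smooth Poincar\'e lemma, with the inclusion inducing the identity there; so it suffices to treat a point $p \in D$. Since $D$ is a normal crossing divisor, I would choose local coordinates $z_1, \ldots, z_n$ on a polydisc $\Delta^n$ about $p$ with $D = \{ z_1 \cdots z_k = 0\}$, so that $U$ looks locally like $(\Delta^*)^k \times \Delta^{n-k}$.

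First I would compute the right-hand side. The stalk complex of $j_*\cala_U\bu$ at $p$ is $\varinjlim_V \cala_U\bu(V \cap U)$ over shrinking polydisc neighborhoods $V$, and since $h^q$ commutes with filtered direct limits and $V \cap U$ is homotopy equivalent to the $k$-torus $(S^1)^k$, the smooth de Rham theorem gives
\[
\calh^q(j_*\cala_U\bu)_p \simeq H^q\big((S^1)^k, \C\big) = \wed{q}\C^k,
\]
of dimension $\binom{k}{q}$, with basis the classes of $\frac{dz_{i_1}}{z_{i_1}} \wedge \cdots \wedge \frac{dz_{i_q}}{z_{i_q}}$ for $i_1 < \cdots < i_q \le k$.

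The heart of the argument is the matching computation for the left-hand side: I would show that the stalk cohomology $h^q\big((\Omega_M\bu(*D))_p\big)$ is again $\wed{q}\C^k$ with the same representatives $\frac{dz_I}{z_I}$. This I would carry out in two steps. (a) A pole-reduction lemma: using the identity $z_i^{-m}\,dz_i = d\big(\tfrac{-1}{(m-1)}z_i^{-(m-1)}\big)$ for $m \ge 2$, every closed meromorphic germ is cohomologous, modulo exact meromorphic forms, to one with only logarithmic poles (pole order $\le 1$ along each branch), by induction on the total pole order. (b) A one-variable/K\"unneth computation: at the level of germs the meromorphic de Rham complex factors, up to the holomorphic directions $z_{k+1}, \ldots, z_n$, into a tensor product of the $k$ one-variable complexes $\big[\, \C\{z_i\}[z_i^{-1}] \xrightarrow{d} \C\{z_i\}[z_i^{-1}]\,dz_i \,\big]$. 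Each such complex has cohomology $\C$ in degree $0$ and $\C\cdot[\frac{dz_i}{z_i}]$ in degree $1$, the class of a meromorphic $1$-form being detected exactly by its residue at $z_i=0$, while the holomorphic factors contribute only $\C$ in degree $0$ by the holomorphic Poincar\'e lemma. Assembling these yields $\wed{\bullet}\C^k$.

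Finally, since the inclusion sends each closed meromorphic form $\frac{dz_I}{z_I}$ to the same form regarded as smooth, and these classes are bases of both stalk cohomologies, the induced map on $\calh^\bullet$ is an isomorphism at $p$, completing the verification. The main obstacle is step (b): making the tensor-product decomposition precise at the level of germs of convergent meromorphic functions, handling convergence and the localized structure $\C\{z_i\}[z_i^{-1}]$, and bookkeeping the several-variable pole reduction so that lowering the pole along one branch does not reintroduce poles along another. This is precisely the technical content the authors defer to the end of the chapter.
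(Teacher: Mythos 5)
Your overall skeleton agrees with the paper's: reduce to stalk maps of cohomology sheaves via Proposition~\ref{p:stalkcohomology}, dispose of $p \in U$ with the holomorphic and smooth Poincar\'e lemmas, and at $p \in D$ compare sections over a polydisk $P$ with $P^* \simeq (S^1)^k$, where the smooth side is the free exterior algebra on $[dz_1/z_1], \ldots, [dz_k/z_k]$ (this is exactly \eqref{10e:free}). The gap is in your computation of the meromorphic side. Step (b) does not work as stated: the germ ring $\C\{z_1,\ldots,z_n\}\left[1/(z_1\cdots z_k)\right]$ is \emph{not} the tensor product of the one-variable rings $\C\{z_i\}[z_i^{-1}]$ and $\C\{z_j\}$ --- the algebraic tensor product consists only of finite sums of products and is a proper (dense) subspace --- so the K\"unneth theorem for tensor products of complexes over a field simply does not apply. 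Making it apply would require completed tensor products of spaces of convergent series and a topological K\"unneth theorem, machinery well beyond anything set up here. Step (a) is also incomplete as sketched: the identity $z_i^{-m}\,dz_i = d\bigl(\tfrac{-1}{m-1}z_i^{-(m-1)}\bigr)$ only removes poles from terms \emph{containing} $dz_i$; a term such as $z_1^{-m}\beta$ with $\beta$ free of $dz_1$ has a pole along $z_1=0$ that this identity cannot touch, and one must use closedness of $\varphi$ to see that such terms can be absorbed into an exact form.

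The paper's deferred proof repairs exactly these points, and in a way that lets you delete step (b) entirely. Its Lemma~\ref{l:mero} writes $\varphi = dz_1\wedge\ga + \gb$, expands $\ga,\gb$ as Laurent series in $z_1$, and uses the coefficient relations forced by $d\varphi = 0$ to exhibit an explicit primitive $\theta$ for the high-order pole part, giving $[\varphi] = [\varphi_0] + [dz_1/z_1]\wedge[\ga_1]$ with $\varphi_0,\ga_1$ closed and pole-free along $z_1=0$; induction on the number $k$ of branches then shows that $h^*\bigl(\Gamma(P,\Omega\bu(*D))\bigr)$ is \emph{generated} by the $[dz_i/z_i]$ (Proposition~\ref{10p:generators}). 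Crucially, the paper never computes this cohomology independently: injectivity of $i^*$ is deduced by mapping into the smooth side, where by \eqref{10e:free} the images are \emph{free} generators --- any relation $\sum c_I\omega_I = d\xi$ among meromorphic forms restricts to a relation among smooth forms on $P^*$, hence has all $c_I = 0$. Generation of the source plus freeness of the images forces $i^*$ to be an isomorphism. So the viable repair of your proposal is: replace the K\"unneth argument by this comparison trick, and replace your pole-order induction by the Laurent-coefficient bookkeeping of Lemma~\ref{l:mero}, which is where the normal crossing hypothesis and closedness actually enter.
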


\pf
We remark first that this is a \emph{local} statement.
Indeed, the main advantage of using sheaf theory is to reduce the global statement
of the algebraic de Rham theorem for an affine variety to a local result.
The inclusion $\Omega_M\bu(*D) \hookrightarrow j_*\cala_U\bu$
of complexes induces a morphism of cohomology sheaves
$\calh^*\big( \Omega_M\bu(*D)\big) \to \calh^*(j_*\cala_U\bu)$.
It is a general fact in sheaf theory that a morphism of sheaves is an isomorphism
if and only if its stalk maps are all isomorphisms \cite[Prop.~1.1, p.~63]{hartshorne}, so we will first examine the stalks of the sheaves in question.
There are two cases:  $p\in U$ and $p \in D$.
For simplicity, let $\Omega_p^q :=(\Omega_M^q)_p$ be the stalk of
$\Omega_M^q$ at $p \in M$ and let $\cala_p^q:= (\cala_U^q)_p$ be the
stalk of $\cala_U^q$ at $p \in U$.

\textbf{Case 1:}  At a point $p\in U$, the stalk of $\Omega_M^q(*D)$ is $\Omega_p^q$,
and the stalk of $j_*\cala_U^q$ is $\cala_p^q$.
Hence, the stalk maps of the inclusion $\Omega_M\bu(*D)
\hookrightarrow j_*\cala_U\bu$
at $p$ are
\begin{equation} \label{e:qi}
\bfig
\xymatrix{
0 \ar[r] & \Omega_p^0 \ar[r] \ar[d] & \Omega_p^1 \ar[r] \ar[d]
& \Omega_p^2 \ar[r] \ar[d]  & \cdots \\
0 \ar[r] & \cala_p^0 \ar[r] & \cala_p^1 \ar[r]
& \cala_p^2 \ar[r]  & \cdots .
}
\efig
\end{equation}
Being a chain map, \eqref{e:qi} induces a homomorphism in cohomology.
By the holomorphic Poincar\'e lemma (Theorem~\ref{t:holompoincare}), the cohomology of the top row of
\eqref{e:qi} is
\[
h^k(\Omega_p\bu) = \begin{cases}
\ \C &\text{for } k=0,\\
\ 0 &\text{for } k >0.
\end{cases}
\]
By the complex analogue of the smooth Poincar\'e
lemma (\cite[Sec.~4, p.~33]{bott--tu} and \cite[p.~38]{griffiths--harris}), the cohomology of the bottom row
of \eqref{e:qi} is
\[
h^k(\cala_p\bu) =\begin{cases}
\ \C &\text{for } k=0,\\
\ 0 &\text{for } k >0.
\end{cases}
\]
Since the inclusion map \eqref{e:qi} takes
$1\in \Omega_p^0$ to $1 \in \cala_p^0$,
it is a quasi-isomorphism.

By Proposition~\ref{p:stalkcohomology}, for $p \in U$,
\[
\calh^k\big( \Omega_M\bu(*D)\big)_p \simeq h^k\big(
(\Omega_M\bu(*D))_p\big) = h^k(\Omega_p\bu)
\]
and
\[
\calh^k(j_*\cala_U\bu)_p \simeq h^k\big( (j_*\cala_U\bu)_p \big) =
h^k(\cala_p\bu).
\]
Therefore, by the preceding paragraph, at $p\in U$
the inclusion $\Omega_M\bu(*D) \hookrightarrow j_*\cala_U\bu$ induces
an isomorphism of stalks
\begin{equation} \label{10e:stalkcomparison}
\calh^k\big( \Omega_M\bu(*D) \big)_p \simeq \calh^k(j_*\cala_U\bu)_p
\end{equation}
for all $k > 0$.

\textbf{Case 2:}  Similarly, we want to show that \eqref{10e:stalkcomparison} holds for
 $p \notin U$, i.e., for $p \in D$.
 Note that to show the stalks of these sheaves at $p$ are isomorphic,
 it is enough to show the spaces of sections are isomorphic over
 a neighborhood basis of polydisks.\index{polydisk}

Choose local coordinates $z_1, \ldots, z_n$ so that $p = (0, \ldots,
0)$
is the origin and $D$ is the zero set of $z_1 \cdots z_k =0$ on some
coordinate neighborhood of $p$.
Let $P$ be the polydisk
$P = \Delta^n := \Delta \times \cdots \times \Delta\ (n \text{ times})$, where
$\Delta$ is a small disk centered at the origin in $\C$,
say of radius $\epsilon$ for some $\epsilon > 0$.
Then $P \cap U$ is the \term{polycylinder}\index{polycylinder}
\begin{align*}
P^* &:= P \cap U = \Delta^n \cap (M-D) \\
&=\{ (z_1, \ldots, z_n) \in \Delta^n \mid z_i \ne 0 \text{ for } i= 1, \ldots,
k \}\\
&= (\Delta^*)^k \times \Delta^{n-k},
\end{align*}
where $\Delta^*$ is the punctured disk $\Delta - \{0\}$ in $\C$.
Note that $P^*$ has the homotopy type of the torus $(S^1)^k$.
For $1 \le i \le k$, let $\gamma_i$ be a circle wrapping once
around the $i$th $\Delta^*$.
Then a basis for the homology of $P^*$ is given by the submanifolds
$\prod_{i\in J} \gamma_i$ for all the various subsets $J \subset [1,k]$.

Since on the polydisk $P$,
\[
(j_*\cala_U\bu)(P) = \cala_U\bu(P \cap U) = \cala\bu(P^*),
\]
the cohomology of the complex $(j_*\cala_U\bu)(P)$ is
\begin{align}
h^*\big( (j_*\cala_U\bu)(P)\big) &= h^*\big( \cala\bu(P^*)\big)\notag\\
&= H^*(P^*, \C) \simeq H^*\big( (S^1)^k, \C \big) \notag\\
&=\bigwedge\left(\left[\frac{dz_1}{z_1}\right] , \ldots, \left[\frac{dz_k}{z_k}\right] \right), \label{10e:free}
\end{align}
the free exterior algebra on the $k$ generators $[dz_1/z_1], \ldots, [dz_k/z_k]$.
Up to a constant factor of $2\pi i$, this basis is dual to the homology basis cited above, as we can see
by integrating over products of loops.

For each $q$, the inclusion $\Omega_M^q(*D) \hookrightarrow j_*\cala_U^q$ of sheaves
induces an inclusion of groups of sections over a polydisk $P$:
\[
\Gamma\big( P, \Omega_M^q(*D) \big) \hookrightarrow \Gamma(P,
j_*\cala_U^q).
\]
As $q$ varies, the inclusion of complexes
\[
i\colon \Gamma\big( P, \Omega_M\bu(*D) \big) \rightarrow \Gamma(P,
j_*\cala_U\bu)
\]
induces a homomorphism in cohomology
\begin{equation}\label{e:onto}
i^*\colon h^*\big(\Gamma( P, \Omega_M\bu(*D) )\big)
\to h^*\big( \Gamma(P, j_*\cala_U\bu) \big)=
\bigwedge \left( \left[\frac{dz_1}{z_1}\right] , \ldots, \left[\frac{dz_k}{z_k}\right] \right).
\end{equation}
Since each $dz_j/z_j$ is a closed meromorphic form on $P$ with poles
along $D$, it defines a cohomology class in $h^*\big( \Gamma(P,
\Omega_M\bu(*D)) \big)$.
Therefore, the map $i^*$ is surjective.
If we could show $i^*$ were an isomorphism, then by taking the direct limit over all
polydisks $P$ containing $p$, we would obtain
\begin{equation}
\calh^*\big( \Omega_M\bu(*D) \big)_p
\simeq \calh^*(j_*\cala_U\bu)_p \quad\text{for } p \in D,
\end{equation}
which would complete the proof of the fundamental lemma (Lemma~\ref{10l:ha}).

We now compute the cohomology of the complex $\Gamma\big(P, \Omega_M\bu(*D)\big)$.

\begin{prop} \label{10p:generators}
Let $P$ be a polydisk $\Delta^n$ in $\C^n$, and $D$ the normal crossing divisor defined in $P$
by $z_1\cdots z_k =0$.
The cohomology ring $h^*\big( \Gamma(P, \Omega\bu(*D))\big)$ is generated by
$[dz_1/z_1], \ldots,$ $[dz_k/z_k]$.
\end{prop}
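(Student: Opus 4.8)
The plan is to compute the cohomology of the complex $\Gamma(P,\Omega^\bullet(*D))$ directly by a Laurent-expansion argument, reducing every closed form, modulo exact forms, to a constant-coefficient $\C$-linear combination of the wedge monomials $\eta_J := \bigwedge_{i\in J}\frac{dz_i}{z_i}$ for $J \subseteq \{1,\ldots,k\}$; since each such monomial is a product of the proposed generators, this exhibits the cohomology ring as generated by $[dz_1/z_1],\ldots,[dz_k/z_k]$. First I would identify the complex concretely: a section of $\Omega_M^q(*D)$ over $P$ is a $q$-form $\sum_I f_I\,dz_I$ whose coefficients lie in $B = \calo(P)[1/z_1,\ldots,1/z_k]$, the ring of functions holomorphic on $P^* = (\Delta^*)^k \times \Delta^{n-k}$ with poles of finite order along $D$. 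Each $f_I$ then has a Laurent expansion in $z_1,\ldots,z_k$, convergent on $P^*$ and bounded below in each polar exponent, with coefficients holomorphic in $z_{k+1},\ldots,z_n$.

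The heart of the argument is a one-variable reduction peeling off a polar coordinate, say $z_1$. Writing $\omega = dz_1\wedge\eta + \zeta$ with $\eta,\zeta$ free of $dz_1$ and splitting $d = dz_1\wedge\partial_{z_1} + d'$, where $d'$ differentiates the remaining variables, closedness of $\omega$ becomes $d'\zeta = 0$ together with $(m+1)\zeta_{m+1} = d'\eta_m$ on Laurent coefficients. Setting $\psi := \sum_{m\neq -1}\frac{1}{m+1}\,\eta_m\,z_1^{m+1}$ — a legitimate element of the complex, since term-by-term integration of a convergent Laurent series preserves convergence and only lowers pole order — a direct computation gives $\omega - d\psi = \frac{dz_1}{z_1}\wedge\eta_{-1} + \zeta_0$, where $\eta_{-1}$ and $\zeta_0$ are $d'$-closed forms in $z_2,\ldots,z_n$ alone, free of $z_1$ and $dz_1$. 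Thus, modulo exact forms, every closed $\omega$ is carried into the subcomplex associated to $(\Delta^*)^{k-1}\times\Delta^{n-k}$ in the variables $z_2,\ldots,z_n$, split as $\frac{dz_1}{z_1}\wedge(\text{closed}) + (\text{closed})$.

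This sets up an induction on the number $k$ of polar coordinates. In the inductive step I would apply the hypothesis to $\eta_{-1}$ and $\zeta_0$, writing each as a constant combination of the monomials in $\frac{dz_2}{z_2},\ldots,\frac{dz_k}{z_k}$ up to a $d'$-exact term $d'\lambda$; since $d\big(\frac{dz_1}{z_1}\wedge\lambda\big) = -\frac{dz_1}{z_1}\wedge d'\lambda$, these error terms are exact in the full complex, and $\omega$ becomes cohomologous to a constant combination of the $\eta_J$. The base case $k=0$ is exactly the holomorphic Poincaré lemma (Theorem~\ref{t:holompoincare}) on the polydisk $\Delta^n$: the holomorphic de Rham complex there has cohomology $\C$ in degree $0$ and $0$ elsewhere, so closed forms are cohomologous to constants, i.e. to the empty monomial $\eta_\emptyset$. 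Combined with the surjectivity of $i^*$ from \eqref{e:onto}, which already identifies the classes $\eta_J$ with a basis of $H^*(P^*,\C) = \bigwedge([dz_1/z_1],\ldots,[dz_k/z_k])$, this shows the $\eta_J$ are also linearly independent, so in fact $i^*$ is an isomorphism.

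The main obstacle is the polar reduction computation itself: one must verify that the primitive $\psi$ genuinely lies in $\Gamma(P,\Omega^\bullet(*D))$ — convergence and finiteness of the pole order of the integrated Laurent series — and that the two residual pieces $\eta_{-1},\zeta_0$ are consistently closed in the smaller complex. It is worth stressing why one cannot simply invoke a Künneth formula to tensor together single-variable computations: the ring $\calo(\Delta^n)$ is strictly larger than the algebraic tensor product $\calo(\Delta)^{\otimes n}$, so the complex is not literally a tensor product of one-variable complexes, and the reduction must be carried out analytically by hand.
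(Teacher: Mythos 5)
Your proposal is correct and follows essentially the same route as the paper: induction on the number $k$ of polar components with the holomorphic Poincar\'e lemma as base case, and an inductive step that Laurent-expands in $z_1$ and subtracts an explicit primitive to reduce a closed form, modulo exact forms, to $\frac{dz_1}{z_1}\wedge\eta_{-1}+\zeta_0$ with both residual pieces closed and polar only along $z_2\cdots z_k=0$ (this is the paper's Lemma~\ref{l:mero}). The only real difference is in execution: the paper integrates just the finitely many principal-part terms (so $\varphi_0$ stays holomorphic in $z_1$ and no convergence check is needed), whereas you integrate the full Laurent tail and must --- and correctly do --- verify convergence of the term-by-term primitive; your closing claim that $i^*$ is an isomorphism is extra content, corresponding to the paper's subsequent proposition rather than to the statement at hand.
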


\begin{proof}
The proof is by induction on the number $k$ of irreducible components
of the singular set $D$.  When $k=0$, the divisor $D$ is empty and
meromorphic forms on $P$ with poles along $D$ are holomorphic.
By the holomorphic Poincar\'e lemma,
\[
h^*\big( \Gamma(P, \Omega\bu)\big) =  H^*(P, \C) = \C.
\]
This proves the base case of the induction.

The induction step is based on the following lemma.

\begin{lemma}\label{l:mero}
Let $P$ be a polydisk $\Delta^n$, and $D$ the normal crossing divisor
defined by $z_1 \cdots z_k =0$ in $P$.
Let $\varphi \in \Gamma\big( P, \Omega^q(*D)\big)$ be a closed meromorphic
$q$-form on $P$ that is holomorphic on $P^* := P -D$ with
at most poles along
$D$.
Then there exist closed meromorphic forms $\varphi_0 \in \Gamma\big(P, \Omega^q(*D) \big)$
and $\ga_1 \in \Gamma\big( P, \Omega^{q-1}(*D) \big)$, which have no poles along $z_1 =0$,
such that their cohomology classes satisfy the relation
\[
[\varphi] = [\varphi_0] + \left[ \frac{dz_1}{z_1}\right] \wedge [\ga_1].
\]
\end{lemma}

\pf
Our proof is an elaboration of the proof of Hodge and Atiyah
\cite[Lem.~17, p.~77]{hodge--atiyah}.
We can write $\varphi$ in the form
\[
\varphi = dz_1 \wedge \ga + \gb,
\]
where the meromorphic $(q-1)$-form $\ga$ and the $q$-form $\gb$ do not involve
$dz_1$.
Next, we expand $\ga$ and $\gb$ as Laurent series in $z_1$:
\begin{align*}
\ga &= \ga_0 + \ga_1 z_1^{-1} + \ga_2 z_1^{-2} + \cdots + \ga_r z_1^{-r},\\
\gb &= \gb_0 + \gb_1 z_1^{-1} + \gb_2 z_1^{-2} + \cdots + \gb_r
z_1^{-r},
\end{align*}
where $\ga_i$ and $\gb_i$ for $1 \le i \le r$ do not involve $z_1$ or
$dz_1$ and are meromorphic in the other variables,
and $\ga_0$, $\gb_0$ are holomorphic in $z_1$, are meromorphic in the
other variables, and do not involve $dz_1$.
Then
\[
\varphi = (dz_1 \wedge \ga_0 + \gb_0) +
\left(dz_1 \wedge \sum_{i=1}^r \ga_i z_1^{-i} + \sum_{i=1}^r \gb_i
z_1^{-i}\right).
\]

Set $\varphi_0 =dz_1 \wedge \ga_0 + \gb_0$.
By comparing the coefficients of $z_1^{-i}\,dz_1$ and $z_1^{-i}$, we
deduce from the condition $d\varphi =0$,
\begin{tabbing}
\hspace{1.4in} \= $d\ga_1$ \= $= d\ga_2 + \gb_1$ \= $= d\ga_3 + 2\gb_2$ \= $= \cdots$ \= $=r\gb_r$ \= $=0$,\\
\> $d\gb_1$ \> $= d\gb_2$            \> $= d\gb_3$      \> $= \cdots$ \> $=d\gb_r$ \> $=0$,
\end{tabbing}
and $d\varphi_0 =0$.

We can write
\begin{equation} \label{e:form}
\varphi = \varphi_0 + \frac{dz_1}{z_1} \wedge \ga_1 + \left(
dz_1 \wedge \sum_{i=2}^r \ga_i z_1^{-i} + \sum_{i=1}^r \gb_i z_1^{-i}
\right).
\end{equation}
It turns out that the term within the parentheses in \eqref{e:form}
is $d\theta$ for
\[
\theta = -\frac{\ga_2}{z_1} - \frac{\ga_3}{2z_1^2} - \cdots
-\frac{\ga_r}{(r-1)z_1^{r-1}}.
\]
In \eqref{e:form}, both $\varphi_0$ and $\ga_1$ are closed.
Hence, the cohomology classes satisfy the relation
\[
[\varphi] = [\varphi_0] + \left[\frac{dz_1}{z_1}\right] \wedge [\ga_1].
\]
\end{proof}

Since $\varphi_0$ and $\alpha_1$ are meromorphic forms which do
not have poles along $z_1 = 0$,
their singularity set is contained in the normal crossing divisor
defined by $z_2 \cdots z_k=0$, which has $k-1$ irreducible components.
By induction, the cohomology classes of $\varphi_0$ and $\ga_1$ are
generated by $[dz_2/z_2], \ldots, [dz_k/z_k]$.
Hence, $[\varphi]$ is a graded-commutative polynomial in $[dz_1/z_1], \ldots, [dz_k/z_k]$.
This completes the proof of Proposition~\ref{10p:generators}.
\end{proof}

%

\begin{prop}
Let $P$ be a polydisk $\Delta^n$ in $\C^n$, and $D$ the normal crossing divisor defined
by $z_1\cdots z_k =0$  in $P$.
Then there is a ring isomorphism
\[
h^*\big( \Gamma(P, \Omega\bu(*D) )\big) \simeq
\bigwedge\left(\left[\frac{dz_1}{z_1}\right] , \ldots, \left[\frac{dz_k}{z_k}\right] \right).
\]
\end{prop}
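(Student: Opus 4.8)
The plan is to upgrade the surjection $i^*$ of \eqref{e:onto} to a \emph{ring} isomorphism by combining it with the generation statement already proved. The two essential inputs are in place: Proposition~\ref{10p:generators} says that $h^*\big(\Gamma(P,\Omega\bu(*D))\big)$ is generated as a $\C$-algebra by the degree-one classes $\omega_j := [dz_j/z_j]$ for $j = 1, \ldots, k$, and the discussion around \eqref{e:onto} and \eqref{10e:free} identifies the target of $i^*$ with the free exterior algebra $\bigwedge\big([dz_1/z_1], \ldots, [dz_k/z_k]\big) \simeq H^*\big((S^1)^k, \C\big)$, onto which $i^*$ is already known to surject.

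First I would record that $h^*\big(\Gamma(P,\Omega\bu(*D))\big)$ is graded-commutative, being the cohomology ring of the commutative differential graded algebra of meromorphic forms under the wedge product. Hence any two degree-one classes anticommute, and each degree-one class squares to zero (over $\C$ this follows from $\omega_j^2 = -\omega_j^2$). Together with Proposition~\ref{10p:generators}, the universal property of the exterior algebra then yields a surjective homomorphism of $\C$-algebras
\[
\Phi\colon \bigwedge(x_1, \ldots, x_k) \onto h^*\big(\Gamma(P,\Omega\bu(*D))\big), \qquad x_j \mapsto \omega_j,
\]
from the free exterior algebra on $k$ generators.

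Next I would compose with $i^*$. Since $i$ is the inclusion of complexes and $dz_j/z_j$ is the same form on both sides, $i^*$ carries each meromorphic class $\omega_j = [dz_j/z_j]$ to the corresponding generator $[dz_j/z_j]$ of the free exterior algebra in \eqref{10e:free}. Thus $i^* \comp \Phi$ sends each $x_j$ to the $j$th generator and is the canonical isomorphism $\bigwedge(x_1, \ldots, x_k) \overset{\sim}{\to} \bigwedge\big([dz_1/z_1], \ldots, [dz_k/z_k]\big)$. In particular $i^* \comp \Phi$ is injective, so $\Phi$ is injective; being also surjective, $\Phi$ is an isomorphism. Consequently $i^* = (i^* \comp \Phi) \comp \Phi^{-1}$ is an isomorphism of graded rings, which is precisely the asserted ring isomorphism.

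No genuine obstacle remains, since all the analytic content — generation by the $[dz_j/z_j]$ and surjectivity of $i^*$ — was already extracted in Proposition~\ref{10p:generators} and in the proof of Lemma~\ref{10l:ha}. The only point requiring care is the purely algebraic bookkeeping: verifying graded-commutativity so that the universal property of the exterior algebra applies, and checking that $i^* \comp \Phi$ fixes the generators, so that it is the identity under the canonical identification. A dimension count, using $\dim_\C \bigwedge(x_1,\ldots,x_k) = 2^k = \dim_\C H^*\big((S^1)^k,\C\big)$, gives an alternative way to conclude once $\Phi$ is known to be surjective, but the factorization argument makes even that unnecessary.
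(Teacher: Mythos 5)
Your proof is correct and takes essentially the same approach as the paper: both combine Proposition~\ref{10p:generators} (generation by the $[dz_j/z_j]$) with the freeness of $h^*\big(\Gamma(P, j_*\cala_U\bu)\big) \simeq H^*\big((S^1)^k,\C\big)$ from \eqref{10e:free}, using the restriction map $i^*$ to rule out any relations beyond graded commutativity. Your packaging via the universal property of the exterior algebra and the factorization $i^* = (i^*\comp\Phi)\comp\Phi^{-1}$ is just a tidier organization of the paper's argument that any relation $\sum c_I \omega_I = d\xi$ among meromorphic classes would restrict to a relation in the free exterior algebra on $P-D$.
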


\begin{proof}
By Proposition~\ref{10p:generators}, the graded-commutative algebra $h^*\big(\Gamma(P, \Omega\bu(*D))\big)$ is generated by $[dz_1/z_1], \ldots, [dz_k/z_k]$.
It remains to show that these generators satisfy no algebraic relations other than those implied by graded commutativity.
Let $\omega_i = dz_i/z_i$ and $\omega_I:= \omega_{i_1 \cdots i_r} := \omega_{i_1} \wedge \cdots \wedge \omega_{i_r}$.
Any linear relation among the cohomology classes $[\omega_I]$ in $h^*\big( \Gamma(P, \Omega\bu(*D) )\big)$
would be, on the level of forms, of the form
\begin{equation} \label{10e:relation}
\sum c_I \omega_I = d\xi
\end{equation}
for some meromorphic form $\xi$ with at most poles along $D$.
But by restriction to $P-D$, this would give automatically a relation in $\Gamma(P, j_* \mathcal{A}_U^q)$.
Since $h^*\big(\Gamma( P, j_* \mathcal{A}_U^q)\big) = \bigwedge\left([\omega_1] , \ldots, [\omega_k] \right)$
is freely generated by $[\omega_1], \ldots, [\omega_k]$ (see \eqref{10e:free}),
the only possible relations \eqref{10e:relation} are all implied by graded commutativity.
\end{proof}

Since the inclusion
$\Omega_M\bu(*D) \hookrightarrow j_*\cala_U^*$
induces an isomorphism
\[
\calh^*\big(\Omega_M\bu(*D)\big)_p \simeq
\calh^*(j_*\cala_U\bu)_p
\]
of stalks of cohomology sheaves for all $p$,
the inclusion $\Omega_M\bu(*D) \hookrightarrow j_*\cala_U^*$ is
a quasi-isomorphism.
This completes the proof of Lemma~\ref{10l:ha}.
\end{proof}



\begin{thebibliography}{10}

\bibitem{arabia--mebkhout} Alberto Arabia and Zoghman Mebkhout.
\newblock Introduction \`a la cohomologie de de Rham.
\newblock Preprint, 260 pages, 1997.

\bibitem{atiyah--macdonald} Michael F.\  Atiyah and Ian G.\ Macdonald.
\newblock \textit{Introduction to Commutative Algebra}.
\newblock Addison--Wesley, Reading, Massachusetts, 1969.


\bibitem{bott--tu} Raoul Bott and Loring W.\ Tu.
\newblock \textit{Differential Forms in Algebraic
Topology}, third corrected printing.
\newblock Springer, New York, 1995.

\bibitem{cartan} Henri Cartan.
\newblock Vari\'et\'es analytiques complexes et cohomologie.
\newblock \textit{Colloque de Bruxelles}, 41--55, 1953.







\bibitem{godement} Roger Godement.
\newblock \textit{Topologie alg\'ebrique et th\'eorie des faisceaux}.
\newblock Hermann, Paris, 1958.

\bibitem{griffiths--harris} Phillip Griffiths and Joe Harris.
\newblock  \textit{Principles of Algebraic Geometry}.
\newblock  John Wiley, New York, 1978.



\bibitem{grothendieck66} Alexandre Grothendieck.
\newblock On the de Rham cohomology of algebraic varieties.
\newblock \textit{Publ.\ Math.\ IHES}, 29: 95--103, 1966.

\bibitem{gunning--rossi} Robert Gunning and Hugo Rossi.
\newblock  \textit{Analytic Functions of Several Complex Variables}.
\newblock  Prentice-Hall, Englewood Cliffs, NJ, 1965.

\bibitem{hartshorne}  Robin Hartshorne.
\newblock   \textit{Algebraic Geometry}, volume~52 of
\textit{Graduate Texts in Mathematics}.
\newblock Springer, New York, 1977.



\bibitem{hodge--atiyah} W.\ V.\ D.\ Hodge and Michael F.\ Atiyah.
\newblock Integrals of the second kind on an algebraic variety.
\newblock \textit{Annals of Math.},  62:56--91, 1955.

\bibitem{hormander} Lars H\"ormander.
\newblock \textit{An Introduction to Complex Analysis in Several Variables}, third edition.
\newblock  North-Holland, New York, 1990.
%
%
\bibitem{serre55}  Jean-Pierre Serre.
\newblock  Faisceaux alg\'ebriques coh\'erents.
\newblock \textit{Annals of Math.}, 61:197--278, 1955.

\bibitem{serre56}  Jean-Pierre Serre.
\newblock G\'eom\'etrie analytique et g\'eom\'etrie alg\'ebrique.
\newblock   \textit{Ann.\ Inst.\ Fourier}, 6:1--42, 1956.

\bibitem{shafarevich}  Igor R.\ Shafarevich.
\newblock  \textit{Basic Algebraic Geometry}, two volumes, second edition.
\newblock  Springer, New York, 1994.


\bibitem{2tu_m} Loring W.\ Tu.
\newblock \textit{An Introduction to Manifolds}, second edition.
\newblock Universitext. Springer, New York, 2011.




\bibitem{warner} Frank W.\ Warner.
\newblock  \textit{Foundations of Differentiable Manifolds and Lie  groups},
volume 94 of \textit{Graduate Texts in Mathematics}.
\newblock Springer, New York, 1983.


\end{thebibliography}
\end{document}